\newlength{\ml}
\newtheorem{theo}{Theorem}[section]
\newtheorem{cor}{Corollary}[theo]
\newtheorem{lemm}[theo]{Lemma}
\newtheorem{prop}[theo]{Proposition}
\theoremstyle{definition}
\newtheorem{example}{Example}
\newtheorem{definition}{Definition} 
\newcommand{\dg}{\ensuremath{\overline{\nabla}}}
\newcommand{\xh}{\ensuremath{\hat{x}}}
\newcommand{\xy}{\ensuremath{y}}
\newcommand{\xb}{\ensuremath{\bar{x}}}
\newcommand{\gh}{\ensuremath{\bar{g}}}
\newcommand{\zb}{\ensuremath{\bar{z}}}
\title{\textbf{Order theory for discrete gradient methods}}
\author{Sølve Eidnes}
\affil{%
    \small{Department of Mathematical Sciences, NTNU, 7491 Trondheim, Norway, and\\
    Department of Mathematics and Cybernetics, SINTEF Digital, 0373 Oslo, Norway\\
  E-mail: \texttt{solve.eidnes@sintef.no}}}
\date{January 15, 2022}
\begin{document}
\maketitle

\noindent \textbf{Abstract:} The discrete gradient methods are integrators designed to preserve invariants of ordinary differential equations. From a formal series expansion of a subclass of these methods, we derive conditions for arbitrarily high order. 
We derive specific results for the average vector field discrete gradient, from which we get P-series methods in the general case, and B-series methods for canonical Hamiltonian systems. Higher order schemes are presented, and their applications are demonstrated on the Hénon--Heiles system and a Lotka--Volterra system, and on both the training and integration of a pendulum system learned from data by a neural network. 

\vspace{6pt}

\noindent \textbf{Keywords:} Geometric integration, discrete gradients, energy preservation, B-series, neural networks, order of accuracy, high-order methods

\vspace{6pt}
\noindent \textbf{AMS subject classification (2010):} 65L05, 65P10, 37M15, 05C05

\section{Energy preservation and discrete gradient methods}
For an ordinary differential equation (ODE)
\begin{equation}
\dot{x} = f(x), \quad x \in \mathbb{R}^d, \quad f : \mathbb{R}^d \rightarrow \mathbb{R}^d,
\label{eq:ode}
\end{equation}
a first integral, or invariant, is a function $H : \mathbb{R}^d \rightarrow \mathbb{R}$ such that $H(x(t)) = H(x(t_0))$ along the solution curves of \eqref{eq:ode}.
If we can write
\begin{equation}\label{eq:Soff}
f(x) = S(x)\nabla H(x),
\end{equation}
where $S(x) : \mathbb{R}^{d \times d}  \rightarrow \mathbb{R}^d$ is a skew-symmetric matrix, then \eqref{eq:ode} preserves $H$: this follows from the skew-symmetry of $S(x)$, which yields
\begin{equation}\label{eq:intprop}
\frac{\mathrm{d}}{\mathrm{d}t} H(x) = \nabla H(x)^T \dot{x} = \nabla H(x)^T S(x) \nabla H(x) = 0.
\end{equation}
The converse is also true: McLachlan et al.\ showed in \cite{McLachlan99} that, whenever \eqref{eq:ode} has a first integral $H$, there exists a skew-symmetric matrix $S(x)$, bounded near every non-degenerate critical point of $H$, such that \eqref{eq:ode} can be written on what is called the \textit{skew-gradient form}:
\begin{equation}
\dot{x} = S(x) \nabla H(x).
\label{eq:odeform}
\end{equation}
The proof provided in \cite{McLachlan99} for this is based on presenting a general form of one such $S(x)$, the so-called default formula
\begin{equation}\label{eq:standardS}
S(x) =  \frac{f(x) \nabla H(x)^T - \nabla H(x) f(x)^T}{\nabla H(x)^T \nabla H(x)}.
\end{equation}
Unless $d=2$, this is generally not a unique choice of $S(x)$, as e.g.
\begin{equation*}
S(x) = \frac{f(x) g(x)^T - g(x) f(x)^T}{g(x)^T \nabla H(x)}
\end{equation*}
will satisfy \eqref{eq:Soff} for any non-vanishing function $g : \mathbb{R}^d \rightarrow \mathbb{R}^d$. Many ODEs with first integrals have a well-known skew-gradient form \eqref{eq:odeform}. This includes Poisson systems, and the important class consisting of canonical Hamiltonian ODEs. For the latter, $S$ will be constant, so that we may write
\begin{equation}\label{eq:constSform}
\dot{x} = S \nabla H(x).
\end{equation}

A numerical integrator preserving a first integral $H$ exactly is called an integral-preserving, or \textit{energy-preserving}, method. Starting in the late 1970s, a few energy-preserving methods were proposed which relied on some discrete analogue of the property \eqref{eq:intprop}, see e.g.\ \cite{Chorin1978,Itoh88,Marsden1992,Kriksin1993}. Most prominent among these is the class of methods called discrete gradient methods, defined formally by Gonzalez in \cite{Gonzalez96} and given their current name in \cite{McLachlan99}.

Given the first integral $H$, a discrete gradient $\overline{\nabla} H : \mathbb{R}^d \times \mathbb{R}^d \rightarrow \mathbb{R}^d$ is a function satisfying the conditions
\begin{align}
\overline{\nabla} H(x,y)^\text{T} (y-x) &= H(y) - H(x), \label{eq:dgcond1}\\
\overline{\nabla} H(x,x)&= \nabla H(x), \label{eq:dgcond2}
\end{align}
for all $x,\xy \in \mathbb{R}^d$. Introducing also the discrete approximation $\overline{S}(x,y,h)$ to $S(x)$, skew-symmetric and satisfying $\overline{S}(x,x,0) = S(x)$, the corresponding discrete gradient method is given by
\begin{equation}\label{eq:dgm}
\frac{\xh-x}{h} = \overline{S}(x,\xh,h) \overline{\nabla}H(x,\xh).
\end{equation}
This scheme satisfies a discrete analogue to \eqref{eq:intprop}:
\begin{equation*}
H(\xh) - H(x) = h \dg H(x,\xh)^T \overline{S}(x,\xh,h) \dg H(x,\xh) = 0.
\end{equation*}
We say that \eqref{eq:dgm} is consistent to the skew-gradient system \eqref{eq:odeform}, since $\overline{S}(x,\xh,h)$ is a consistent approximation of $S(x)$ and $\dg H(x,\xh)$ is a consistent approximation of $\nabla H(x)$.

If $d\geq 2$, there are in general infinitely many functions satisfying \eqref{eq:dgcond1}--\eqref{eq:dgcond2}. Many explicit definitions of concrete discrete gradients have been suggested, and we will discuss the most prominent among them in Section \ref{sect:expdgs}. One of these is the \textit{average vector field (AVF) discrete gradient}, first introduced in \cite{Harten83} and sometimes called the mean value discrete gradient \cite{McLachlan99}. For a given $H$, it is given by the average of $\nabla H$ on the segment $[x,\xy]$:
\begin{equation}
\dg_{\text{AVF}} H(x,\xy) = \int_0^1 \nabla H ((1-\xi)x + \xi \xy) \, \mathrm{d}\xi.
\label{eq:avfdg}
\end{equation}
When applied to the constant $S$ system \eqref{eq:constSform}, the discrete gradient method with $\overline{S}(x,\xy,h) = S$ and $\dg H = \dg_{\text{AVF}} H$ coincides with the scheme
\begin{equation}\label{eq:avfm2}
\frac{\xh-x}{h} = \int_0^1 f ((1-\xi)x + \xi \xh) \, \mathrm{d}\xi.
\end{equation}
This is sometimes viewed as a method by itself, applicable to any system \eqref{eq:ode}, in which case it is called the average vector field (AVF) method \cite{Quispel08}. This was shown in \cite{Celledoni09} to be a B-series method.

As pointed out in \cite{McLachlan99}, the discrete gradient is restricted by its definition to be at best a second order approximation to point values of $\nabla{H}$. In much of the literature on discrete gradient methods, see e.g.\ \cite{Gonzalez96,Hairer06}, the approximation $\overline{S}$ is defined as being independent of $h$. In that case, the discrete gradient scheme \eqref{eq:dgm} can at best guarantee second order convergence towards the exact solution. Over the last two decades, there have been published some notable papers on higher order discrete gradient methods. McLaren and Quispel were first out with their bootstrapping technique derived in \cite{McLaren04,McLaren09}. Given any discrete gradient $\dg H$ and an approximation to $S(x)$ given by $\overline{S}(x,\xy,h)$, they compare the Taylor expansion of the corresponding discrete gradient scheme to that of the exact solution, and thus find a new approximation $\tilde{S}(x,\xy,h)$ to $S(x)$ which yields higher order. This quickly becomes a very involved procedure, but by using a symmetric discrete gradient, they derive fourth order methods. A downside of this method is that the schemes of order higher than two require the calculation of tensors of order three or higher at every time step.

A fourth order generalization of the AVF method is proposed by the same authors in \cite{Quispel08}. This can be viewed as a fourth-order discrete gradient method for all skew-gradient systems where $S$ is constant.
Also worth mentioning in this setting is the collocation-like method introduced by Hairer \cite{Hairer09} and then generalized to Poisson systems by Cohen and Hairer \cite{Cohen11}. This is a multi-stage extension of the AVF discrete gradient method. To get higher than second order, more than one stage is required. In that case the method is not a discrete gradient method, although it is energy-preserving.

Norton et al.\ show in \cite{Norton15} that linear projection methods can be viewed as a class of discrete gradient methods for skew-gradient systems with $S(x)$ given by the default formula \eqref{eq:standardS}. In connection to this, Norton and Quispel suggest in \cite{Norton14} the class of 
approximations to \eqref{eq:standardS} given by
\begin{equation}\label{eq:nortonS}
\overline{S}(x,y,h) = \frac{\tilde{f}(x,y,h) \tilde{g}(x,y,h)^T - \tilde{g}(x,y,h) \tilde{f}(x,y,h)^T}{\hat{g}(x,y,h)^T \breve{g}(x,y,h)},
\end{equation}
where $\tilde{f}(x,y,h)$ is a consistent approximation to $f(x)$, and $\tilde{g}(x,y,h)$, $\hat{g}(x,y,h)$ and $\breve{g}(x,y,h)$ are all consistent approximations to $\nabla H(x)$. The corresponding discrete gradient method then inherits the order of the method $\xh = x + h \tilde{f}(x,\xh,h)$.

The use of the discrete gradient method expands beyond the numerical integration of ODEs.  It can be applied to the time-integration of partial differential equations (PDEs) which has constants of motion, guaranteeing preservation of a discrete approximation of an integral \cite{Furihata99, Celledoni12, Eidnes18}. Higher-order methods have been studied in this context, but then only schemes that are expanding on the AVF method \cite{Cai15, Jiang17}. Furthermore, building on the recently introduced Hamiltonian neural networks \cite{Greydanus19, Chen19}, Matsubara et al.\ have shown how discrete gradients can be used to learn energy-preserving systems from data \cite{Matsubara19}. Since the energy is given by a neural network, none of the higher-order methods reviewed above are applicable in that case, and the authors suggest to use multi-step methods to get higher than second order.

To the best of our knowledge, no one has so far suggested higher than fourth order discrete gradient methods for a general skew-gradient system \eqref{eq:odeform}. Furthermore, for this general case, all discrete gradient methods suggested of higher than second order involve tensors of order three or higher.  In this paper we present general theory for achieving methods of arbitrary order, using any discrete gradient and not depending on tensors of order higher than two. Largely inspired by the above mentioned references, especially \cite{Quispel08,McLaren04,McLaren09}, we present here a general form giving a class of approximations $\overline{S}(x,y,h)$ to any $S(x)$ in \eqref{eq:odeform}, with corresponding conditions for achieving an arbitrary order of the discrete gradient method \eqref{eq:dgm}. We do this step by step. In the next section, we derive some useful properties of a general discrete gradient and discuss the most common specific discrete gradients. Then we consider the AVF method and use order theory for B-series methods to obtain a generalization of this, with corresponding order conditions. In Section \ref{sect:avfgen}, we build on this to develop higher order discrete gradient methods for a general skew-gradient system, using the AVF discrete gradient. Then, in Section \ref{sect:gengen}, we generalize this further to allow for a free choice of the discrete gradient, thus arriving at the general form $\overline{S}(x,y,h)$ mentioned above, and a formal series expansion of the corresponding discrete gradient methods. Throughout the paper, we present several examples of higher order schemes for the different cases. In the final section, we apply some of these schemes to numerical examples: the Hénon--Heiles system with a constant $S$, a Lotka--Volterra system with a non-constant S, and a pendulum system learned by a neural network.

\section{A preliminary analysis of discrete gradients}
To simplify notation in the following derivations, we define $g:=\nabla H$. Furthermore, we suppress the first argument of $\dg H$ and define $\gh(\xy) := \dg H(x,\xy)$. We use Einstein summation convention, with a comma in the subscript to differentiate between components of covectors and derivatives. That is, we write
$\gh(\xy)^i_{,j} := \frac{\partial \gh(\xy)^i}{\partial \xy^j}$, $\gh(\xy)_{i,j} := \frac{\partial \gh(\xy)_i}{\partial \xy^j}$ and so forth. Taylor expanding $\gh(\xy)$ around $x$, we get
\begin{equation}
\begin{split}
\gh(\xy)^i =& \, \gh(x)^i + \gh(x)^i_{,j}(\xy^j-x^j) + \frac{1}{2}\gh(x)^i_{,j k}(\xy^j-x^j)(\xy^k-x^k) \\
&+ \frac{1}{6}\gh(x)^i_{,j k l}(\xy^j-x^j)(\xy^k-x^k)(\xy^l-x^l) + \mathcal{O}(\lvert \xy-x \rvert^4),
\end{split}
\label{eq:ghat}
\end{equation}
or
\begin{align}
\gh(\xy) =& \sum_{\kappa=0}^\infty \frac{1}{\kappa !} \gh^{(\kappa)}(x)(\xy-x)^\kappa.
\label{eq:taylor1}
\end{align}
By the consistency criterion \eqref{eq:dgcond2}, we have $\gh(x) = g(x)$. However, if we require the discrete gradient to be a differentiable function in its second argument, \eqref{eq:dgcond2} follows directly from \eqref{eq:dgcond1}. To see this, we write \eqref{eq:dgcond1} as
\begin{equation}
H(\xy)-H(x) = \gh(\xy)_i (\xy^i - x^i).
\label{eq:DG1_einstein}
\end{equation}
Differentiating this with respect to $\xy^j$, we get
\begin{equation}
g(\xy)_j = H(\xy)_{,j} = \gh(\xy)_{i,j} (\xy^i - x^i) + \gh(\xy)_j,
\label{eq:DDG_einstein}
\end{equation}
The case $\xy = x$ immediately gives $g(x)_j = \gh(x)_j$, or \eqref{eq:dgcond2}. 
Assuming further that $\dg H \in C^2(\mathbb{R}^d \times \mathbb{R}^d,\mathbb{R}^d)$, we can differentiate once more to get
\begin{equation}
g(\xy)_{j,k} = H(\xy)_{,jk} = \gh(\xy)_{i,jk} (\xy^i - x^i) + \gh(\xy)_{j,k} + \gh(\xy)_{k,j},
\label{eq:diff2}
\end{equation}
which means that
\begin{equation*}
g(x)_{j,k} = H(x)_{,jk} = \gh(x)_{j,k} + \gh(x)_{k,j},
\end{equation*}
or
\begin{equation}
\nabla^2 H(x) = D_{2} \dg H(x,x) + (D_{2} \dg H(x,x))^\text{T},
\label{eq:DDH}
\end{equation}
where $\nabla^2 H \coloneqq D\nabla H$ denotes the Hessian of $H$, and $D_{2} \dg H$ denotes the Jacobian of $\dg H$ with respect to its second argument.

\begin{lemm}\label{th:symdg}
If the discrete gradient $\dg H$ is symmetric, i.e.\ $\dg H(x,y) = \dg H(y,x)$ for all $x, y \in \mathbb{R}^d$, then
\begin{equation}
D_{2} \dg H(x,x) = \frac{1}{2}\nabla^2 H(x).
\label{eq:symdg}
\end{equation}
\end{lemm}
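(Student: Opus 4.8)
The plan is to extract two independent linear relations among the diagonal Jacobians $D_1 \dg H(x,x)$ and $D_2 \dg H(x,x)$ (where $D_1 \dg H$ denotes the Jacobian of $\dg H$ with respect to its first argument) and then solve for $D_2 \dg H(x,x)$. The symmetry hypothesis will supply one relation, and differentiating the consistency identity \eqref{eq:dgcond2} will supply the other, with the Hessian entering only through the latter.

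First I would differentiate the symmetry condition $\dg H(x,y) = \dg H(y,x)$ with respect to the components of $y$. On the left this produces the second-argument Jacobian $D_2 \dg H(x,y)$, whereas on the right the variable $y$ occupies the first slot, so the chain rule yields the first-argument Jacobian $D_1 \dg H(y,x)$. Evaluating on the diagonal $y = x$ gives the identity $D_2 \dg H(x,x) = D_1 \dg H(x,x)$. (Differentiating instead with respect to $x$ produces the same identity, so symmetry furnishes exactly one relation.) Next I would differentiate the consistency identity, written as $\dg H(x,x) = \nabla H(x)$, totally with respect to $x$. The right-hand side has Jacobian $\nabla^2 H(x)$, while by the chain rule applied to the diagonal map $x \mapsto \dg H(x,x)$ the left-hand side has Jacobian $D_1 \dg H(x,x) + D_2 \dg H(x,x)$. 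Equating the two gives $\nabla^2 H(x) = D_1 \dg H(x,x) + D_2 \dg H(x,x)$, and substituting the symmetry relation collapses this to $\nabla^2 H(x) = 2\, D_2 \dg H(x,x)$, which is \eqref{eq:symdg}.

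The only real care needed is bookkeeping: keeping straight which argument each Jacobian differentiates, so that the derivative of $\dg H(y,x)$ in $y$ is correctly identified as $D_1 \dg H(y,x)$ rather than as a second-argument derivative. This is more an indexing nuisance than a genuine obstacle, and it is the step most prone to a sign or slot error. I would also note that the argument requires only $\dg H \in C^1$, and that it is fully consistent with the earlier relation \eqref{eq:DDH}: the conclusion forces $D_2 \dg H(x,x) = \tfrac{1}{2}\nabla^2 H(x)$ to be symmetric, and substituting this back into \eqref{eq:DDH} reproduces $\nabla^2 H(x) = \nabla^2 H(x)$ by the symmetry of the Hessian.
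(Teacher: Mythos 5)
Your proof is correct, but it takes a genuinely different route from the paper's. You work entirely with the two diagonal Jacobians: differentiating the symmetry $\dg H(x,y)=\dg H(y,x)$ in $y$ and setting $y=x$ gives $D_1\dg H(x,x)=D_2\dg H(x,x)$, and the total derivative of the consistency condition \eqref{eq:dgcond2} gives $D_1\dg H(x,x)+D_2\dg H(x,x)=\nabla^2 H(x)$; solving yields \eqref{eq:symdg}. The paper instead never introduces $D_1\dg H$ explicitly: it differentiates the identity \eqref{eq:DDG_einstein} (itself obtained from the defining relation \eqref{eq:dgcond1}) with respect to the first argument, uses symmetry to convert the resulting $x$-derivative into a second-argument derivative, concludes that the matrix $D_2\dg H(x,x)$ is symmetric, and then substitutes into the previously derived relation $\nabla^2 H(x)=D_2\dg H(x,x)+(D_2\dg H(x,x))^{\mathrm{T}}$ of \eqref{eq:DDH}. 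The two arguments are close cousins --- your pair of relations is $D_1=D_2$ and $D_1+D_2=\nabla^2 H$, the paper's is $D_2=D_2^{\mathrm{T}}$ and $D_2+D_2^{\mathrm{T}}=\nabla^2 H$ --- but yours is the more elementary one: it bypasses \eqref{eq:dgcond1} entirely, avoids the mixed second partials $\partial^2_{x^k\partial y^j}\gh$ appearing in the paper's computation, and, as you note, needs only $\dg H\in C^1$ (together with $H\in C^2$), whereas the paper's route through \eqref{eq:diff2} and \eqref{eq:DDH} invokes $C^2$ regularity. What the paper's approach buys in exchange is that the intermediate fact it establishes --- the matrix symmetry of $D_2\dg H(x,x)$ and its failure for nonsymmetric discrete gradients --- is exactly what motivates the skew part $Q$ in Definition \ref{def:Q} and the subsequent Lemma \ref{th:Qlem}, so it fits more naturally into the narrative that follows.
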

\begin{proof}
Disclosing the suppressed argument $x$ in \eqref{eq:DDG_einstein}, we have
\begin{equation*}
g(\xy)_j = \frac{\partial}{\partial \xy^j}(\gh(x,\xy)_i) (\xy^i - x^i) + \gh(x,\xy)_j,
\end{equation*}
which we can differentiate by $x^k$ to get
\begin{equation*}
0 = \frac{\partial^2}{\partial x^k\partial \xy^j}(\gh(x,\xy)_i) (\xy^i - x^i) - \frac{\partial}{\partial \xy^j}\gh(x,\xy)_k + \frac{\partial}{\partial x^k}\gh(x,\xy)_j.
\end{equation*}
If $\dg H$ is symmetric, 
$$ \frac{\partial}{\partial x^k} \gh(x,\xy)_j = \frac{\partial}{\partial x^k} \gh(\xy,x)_j. $$
Thus, for $\xy = x$ we get $\gh(x)_{k,j} = \gh(x)_{j,k}$, or $(D_{2} \dg H(x,x))^\text{T} = D_{2} \dg H(x,x)$. Inserting that in \eqref{eq:DDH}, we obtain \eqref{eq:symdg}.
\end{proof}

\begin{definition}\label{def:Q}
Given a discrete gradient $\dg H \in C^1(\mathbb{R}^d \times \mathbb{R}^d,\mathbb{R}^d)$, we define the function $Q : \mathbb{R}^d \times \mathbb{R}^d \rightarrow \mathbb{R}^{d\times d}$ by
\begin{equation}
Q(x,\xy) \coloneqq \frac{1}{2}\left( (D_2\dg H(x,\xy))^T - D_2\dg H(x,\xy) \right).
\label{eq:theQ}
\end{equation}
\end{definition}
Note that $Q(x,y)$ is a skew-symmetric matrix. From \eqref{eq:DDH}, we see that $Q(x,x) = \frac{1}{2}\nabla^2 H(x) - D_2\dg H(x,x)$. Differentiating \eqref{eq:theQ} with respect to the second argument and setting $\xy = x$, we obtain 
\begin{equation*}
(D_2 Q(x,x))_{jkl} = \frac{1}{2}\gh(x)_{k,jl} - \frac{1}{2}\gh(x)_{j,kl}.
\end{equation*}
Similarly, differentiating \eqref{eq:diff2} with respect to the second argument and setting $\xy = x$, we obtain 
\begin{equation*}
g(x)_{j,kl} = \gh(x)_{j,kl} + \gh(x)_{k,jl} + \gh(x)_{l,jk}.
\end{equation*}
Using these results, we get that, for any $v \in \mathbb{R}^d$,
\begin{align*}
(D_2 Q(x,x) (v,v))_j &= (D_2 Q(x,x))_{jkl} v^k v^l = \frac{1}{2}\gh(x)_{k,jl}v^k v^l-\frac{1}{2}\gh(x)_{j,kl}v^k v^l\\
&=\frac{1}{4}\gh(x)_{k,jl}v^k v^l + \frac{1}{4}\gh(x)_{l,jk}v^k v^l + \frac{1}{4}\gh(x)_{j,kl}v^k v^l-\frac{3}{4}\gh(x)_{j,kl}v^k v^l\\
&=\frac{1}{4}g(x)_{j,kl}v^k v^l - \frac{3}{4}\gh(x)_{j,kl}v^k v^l,
\end{align*}
or
\begin{equation}
D_2 Q(x,x) (v,v) = \frac{1}{4} D^2 \nabla H(x) (v,v) - \frac{3}{4} D_2^2 \dg H(x,x) (v,v). 
\label{eq:QD2}
\end{equation}
Continuing in this manner, we get the following result, which we will later show is crucial for the development of a general theory for higher order discrete gradient methods.
\begin{lemm}\label{th:Qlem}
For a discrete gradient $\dg H \in C^{p}(\mathbb{R}^d \times \mathbb{R}^d,\mathbb{R})$ and the corresponding $Q$ given by \eqref{eq:theQ},
\begin{equation*}
D_2^{\kappa} \dg H(x,x)v^\kappa = \frac{1}{\kappa+1}D^{\kappa}\nabla H(x)v^\kappa - \frac{2\kappa}{\kappa+1} D_2^{\kappa-1} Q(x,x) v^\kappa \quad \text{ for any } \kappa \in [1,p], v \in \mathbb{R}^d.
\end{equation*}

\end{lemm}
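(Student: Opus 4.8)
The plan is to reduce the claim to two tensor identities that generalise the two displayed relations preceding \eqref{eq:QD2}, and then to solve a $2\times2$ linear system obtained by contracting both with $v$. Fixing $x$, writing $g=\nabla H$, and exploiting that the differentiation indices of $\gh$ commute (Schwarz, valid since $\dg H\in C^\kappa$), I introduce the two vectors obtained by contracting the $\kappa$-th derivative of $\gh$ at the diagonal with $v$ in different slots: the output-free contraction
$$
w_j := \gh(x)_{j,k_1\cdots k_\kappa}\,v^{k_1}\cdots v^{k_\kappa}=\bigl(D_2^{\kappa}\dg H(x,x)\,v^\kappa\bigr)_j,
$$
and the output-contracted contraction $u_j := \gh(x)_{l,\,jk_1\cdots k_{\kappa-1}}\,v^l v^{k_1}\cdots v^{k_{\kappa-1}}$, in which the gradient index $l$ is summed against $v$ while one differentiation index $j$ is left free. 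The whole computation amounts to expressing $w$ through $D^\kappa\nabla H(x)v^\kappa$ and $D_2^{\kappa-1}Q(x,x)v^\kappa$.

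First I would establish the generalisation of the symmetric-sum identity $g(x)_{j,kl}=\gh(x)_{j,kl}+\gh(x)_{k,jl}+\gh(x)_{l,jk}$ used for $\kappa=2$. Differentiating \eqref{eq:DDG_einstein} $\kappa$ times with respect to the second argument and evaluating at $\xy=x$, the factor $(\xy^i-x^i)$ being linear means that, by Leibniz' rule, only the terms in which $(\xy-x)$ is differentiated exactly once survive on the diagonal; this yields
\begin{equation*}
g(x)_{j,k_1\cdots k_\kappa}=\gh(x)_{j,k_1\cdots k_\kappa}+\sum_{a=1}^{\kappa}\gh(x)_{k_a,\,jk_1\cdots\widehat{k_a}\cdots k_\kappa},
\end{equation*}
where $\widehat{k_a}$ denotes omission. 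Contracting with $v^{k_1}\cdots v^{k_\kappa}$ and using the symmetry of the differentiation indices, all $\kappa$ summands collapse to the single vector $u$, giving the first relation $D^\kappa\nabla H(x)v^\kappa=w+\kappa\,u$.

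Next I would differentiate the definition \eqref{eq:theQ} of $Q$ a total of $\kappa-1$ times and evaluate at $\xy=x$, obtaining the direct extension of the $\kappa=2$ formula, namely $\bigl(D_2^{\kappa-1}Q(x,x)\bigr)_{jk,\,l_1\cdots l_{\kappa-1}}=\tfrac12\bigl(\gh(x)_{k,\,jl_1\cdots l_{\kappa-1}}-\gh(x)_{j,\,kl_1\cdots l_{\kappa-1}}\bigr)$. Contracting the second matrix index and the $\kappa-1$ differentiation indices with $v$, and leaving $j$ free, produces the second relation $D_2^{\kappa-1}Q(x,x)v^\kappa=\tfrac12(u-w)$. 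The two relations form a linear system in $(w,u)$; eliminating $u$ gives $(\kappa+1)w=D^\kappa\nabla H(x)v^\kappa-2\kappa\,D_2^{\kappa-1}Q(x,x)v^\kappa$, which is exactly the assertion since $w=D_2^{\kappa}\dg H(x,x)v^\kappa$.

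I expect the main obstacle to be the first step: getting the Leibniz combinatorics right so that precisely the once-differentiated terms survive on the diagonal, and then justifying that contraction with the single repeated vector $v$ makes all $\kappa$ off-diagonal terms equal, which is where the symmetry of the mixed partials of $\gh$ (hence the $C^\kappa$ hypothesis) is essential. A minor technical point is that the naive differentiation formally passes through a $(\kappa+1)$-st derivative of $\gh$ multiplying $(\xy-x)$; this term is annihilated at $\xy=x$, and one can avoid it entirely, staying within the stated regularity, by instead matching the degree-$\kappa$ homogeneous parts of the Taylor expansions of the two sides of \eqref{eq:DG1_einstein} about $\xy=x$. The remainder is the routine $2\times2$ elimination, with the recorded cases $\kappa=1$ and $\kappa=2$ serving as a check.
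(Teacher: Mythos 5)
Your proposal is correct and follows essentially the same route as the paper: you derive the same two diagonal identities (the symmetric-sum formula $g(x)_{j,I}=\gh(x)_{j,I}+\sum_m\gh(x)_{i_m,\{j,I_m\}}$ and the expression for $D_2^{\kappa-1}Q(x,x)$ in terms of $\gh$), contract with $v$ using the symmetry of the differentiation indices, and eliminate the auxiliary contraction; the paper does the elimination by a direct add-and-subtract of $\tfrac{1}{2\kappa}\gh(x)_{j,I}v^I$ rather than phrasing it as a $2\times2$ system, but the content is identical. Your remark on avoiding the formal $(\kappa+1)$-st derivative via Taylor matching is a sound refinement of a point the paper passes over silently.
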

\begin{proof}
Differentiating \eqref{eq:diff2} $\kappa-1$ times by $\xy$ and setting $\xy = x$, we find that the $\kappa$-th derivatives of $g(x)$ can be expressed by the $\kappa$-th derivatives of $\gh(x)$ through the relation
\begin{equation}
g(x)_{j,I} = \gh(x)_{j,I} + \sum_{m=1}^\kappa \gh(x)_{i_m, \left\{j, I_m\right\}}, \quad \text{ for all } j,I,\kappa,
\label{eq:gsum}
\end{equation}
where $I = \left\{i_1,i_2,\dots,i_\kappa \right\}$ is an ordered set of $\kappa$ indices, and $I_m = I \setminus \left\{ i_m\right\} = \left\{i_1,i_2,\ldots i_{m-1},i_{m+1},\ldots,i_\kappa \right\}$, i.e.\ $I$ with the $m$-th element excluded. Similarly, by continued differentiation of \eqref{eq:theQ}, we obtain
\begin{equation*}
(D_2^{\kappa-1} Q(x,x))_{j,I} = \frac{1}{2}\gh(x)_{i_1, \left\{j, I_1\right\}} - \frac{1}{2}\gh(x)_{j,I}.
\end{equation*}
Thus
\begin{align*}
(D_2^{\kappa-1} Q(x,x) v^\kappa)_j &= (D_2^{\kappa-1} Q(x,x))_{j,I} v^I = \frac{1}{2}\gh(x)_{i_1,\left\{j, I_1\right\}}v^{\left\{j, I_1\right\}} -\frac{1}{2}\gh(x)_{j,I}v^I\\
&=\frac{1}{2\kappa} \sum_{m=1}^\kappa\gh(x)_{i_m,\left\{j, I_m\right\}}v^{\left\{j, I_m\right\}} + \frac{1}{2\kappa} \gh(x)_{j,I}v^I- \frac{1}{2\kappa} \gh(x)_{j,I}v^I -\frac{1}{2}\gh(x)_{j,I}v^I\\
&=\frac{1}{2\kappa} g(x)_{j,I} v^I - (\frac{1}{2\kappa} +\frac{1}{2})\gh(x)_{j,I}v^I = \frac{1}{2\kappa} g(x)_{j,I} v^I - \frac{\kappa+1}{2\kappa}\gh(x)_{j,I}v^I.
\end{align*}
\end{proof}

\subsection{Properties of different discrete gradients}\label{sect:expdgs}

While introducing the discrete gradient methods in \cite{Gonzalez96}, Gonzalez also gave an example of a discrete gradient satisfying \eqref{eq:dgcond1}--\eqref{eq:dgcond2}: the \textit{midpoint discrete gradient} is given by
\begin{equation*}
\dg_\text{M} H(x,\xy) \coloneqq \nabla H\left(\frac{x+\xy}{2}\right) + \frac{H(\xy)-H(x)-\nabla H\left(\frac{x+\xy}{2}\right)^T\left(\xy-x\right)}{(\xy-x)^T(\xy-x)}\left(\xy-x\right).
\end{equation*}

Even when $H$ is analytic, this discrete gradient is often not; the second order partial derivatives are in general singular in $y=x$.
For that reason, it is not suited for achieving higher order methods by the techniques we consider in this paper.

The \textit{Itoh--Abe discrete gradient method}, introduced in \cite{Itoh88}, notably does not require evaluation of the gradient. The corresponding discrete gradient, which has also been called the coordinate increment discrete gradient \cite{McLachlan99}, is defined by
\begin{equation}
\dg_\text{IA}H(x,\xy) \coloneqq \sum_{j=1}^d \alpha_j e_j,
\label{eq:iadg}
\end{equation}
where $e_j$ is the $j^{\text{th}}$ canonical unit vector and
\begin{align*}
\alpha_j &=
  \begin{cases}
    \dfrac{H(w_j) - H(w_{j-1})}{\xy^j-x^j}       & \quad \text{if } \xy^j \neq x^j,\\
    \frac{\partial H}{\partial x^j}(w_{j-1})  & \quad \text{if } \xy^j = x^j,
  \end{cases}\\
w_j &= \sum\nolimits_{i=1}^j \xy^i e_i + \sum\nolimits_{i=j+1}^n x^i e_i.
\end{align*}
While the other discrete gradients we consider in this paper are symmetric and thus second order approximations to $\nabla H$, the Itoh--Abe discrete gradient is only of first order. However, a second order discrete gradient, which we call the \textit{symmetrized Itoh--Abe (SIA) discrete gradient}, is given by
\begin{equation}
\dg_\text{SIA} H (x,\xy) \coloneqq \frac{1}{2} \left(\dg_\text{IA}H(x,\xy)+\dg_\text{IA}H(\xy,x)\right).
\label{eq:siadg}
\end{equation}

Furihata presented the discrete variational derivative method for a class of PDEs in \cite{Furihata99}, a method which has been developed further by Furihata, Matsuo and co-authors in a series of papers, e.g.\ \cite{Matsuo02,Yaguchi12}, as well as the monograph \cite{Furihata11}. As shown in \cite{Eidnes18}, these schemes can also be obtained by semi-discretizing the PDE in space and then applying a discrete gradient method on the resulting system of ODEs. We give here the definition of the specific discrete gradient that gives the schemes of Furihata and co-author, defined for a class of invariants that includes all polynomial functions:
\begin{definition}
Assume that we can write the first integral as
\begin{equation}\label{eq:furihataH}
H(x) = \sum_l c_l \prod_{j=1}^d f^l_{j}(x^j),
\end{equation}
for functions $f^l_{j} : \mathbb{R} \rightarrow \mathbb{R}$. 
The \textit{Furihata discrete gradient} $\overline{\nabla}_\text{F} H(x,\xy)$ is defined by 
\begin{equation}\label{eq:furihatadg}
\dg_\text{F}H(x,\xy) \coloneqq \sum_{j=1}^d \alpha_j e_j,
\end{equation}
where $e_j$ is the $j^{\text{th}}$ canonical unit vector and
\begin{align*}
\alpha_j &=
  \begin{cases}
    \sum\limits_l \frac{c_l}{2} \frac{f_j^l(\xy^j)-f_j^l(x^j)}{\xy^j-x^j} \left(\prod\limits_{k=1}^{j-1}{f_k^l(x^k)} + \prod\limits_{k=1}^{j-1}{f_k^l(\xy^k)}\right) \prod\limits_{k=j+1}^{d}{\frac{f_k^l(x^k)+f_k^l(\xy^k)}{2}}  & \quad \text{if } \xy^j \neq x^j,\\
    \sum\limits_l \frac{c_l}{2} \frac{\mathrm{d}f^l_{j}(x^j)}{\mathrm{d}x^j} \left(\prod\limits_{k=1}^{j-1}{f_k^l(x^k)} + \prod\limits_{k=1}^{j-1}{f_k^l(\xy^k)}\right) \prod\limits_{k=j+1}^{d}{\frac{f_k^l(x^k)+f_k^l(\xy^k)}{2}}   & \quad \text{if } \xy^j = x^j.
  \end{cases}
\end{align*}
\end{definition}

The discrete gradient introduced by Matsubara and co-authors in \cite{Matsubara19} shares some relation to the Furihata discrete gradient, in that they are both relying on a discrete analogue to the product rule for derivatives. But the former discrete gradient, obtained by what the authors call the \textit{automatic discrete differentiation algorithm}, more importantly depends on an analogue to the chain rule. By using this algorithm instead of standard automatic differentiation in a neural network that learns a dynamical system from data, the discrete gradient is obtained together with the preserved energy. Thus the learned system can be integrated in an energy-preserving manner, which distinguishes the approach of Matsubara et al. from comparable studies \cite{Greydanus19,Chen19,Zhong19}.

Lastly we consider the AVF discrete gradient \eqref{eq:avfdg}, which has several noteworthy characteristics.
\begin{lemm}\label{th:avfsym}
The $Q(x,y)$ corresponding to the AVF discrete gradient is the zero matrix, since $(D_{2} \dg_{\text{AVF}} H (x,\xy))^\text{T} = D_{2} \dg_{\text{AVF}} H(x,\xy)$.
\end{lemm}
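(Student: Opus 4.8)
The plan is to compute the Jacobian $D_2\dg_{\text{AVF}} H$ of the AVF discrete gradient \eqref{eq:avfdg} directly from its integral definition and to verify that this matrix is symmetric; once that is established, $Q(x,\xy) = 0$ follows immediately by substituting into \eqref{eq:theQ}, since the bracket there is precisely the difference between $(D_2\dg H)^T$ and $D_2\dg H$. Assuming $H \in C^2$, so that we may differentiate under the integral sign, I would write the $i$-th component as $\dg_{\text{AVF}} H(x,\xy)_i = \int_0^1 g_i\big((1-\xi)x + \xi\xy\big)\,\mathrm{d}\xi$ and differentiate with respect to $\xy^j$.

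Setting $u := (1-\xi)x + \xi\xy$, the chain rule gives $\partial u^k/\partial \xy^j = \xi\,\delta^k_j$, so only the $k=j$ term survives and the Jacobian has components
\begin{equation*}
\frac{\partial}{\partial \xy^j}\dg_{\text{AVF}} H(x,\xy)_i = \int_0^1 \xi\, g_{i,j}\big((1-\xi)x + \xi\xy\big)\,\mathrm{d}\xi.
\end{equation*}
The decisive observation is then that $g_{i,j} = H_{,ij}$ is the Hessian of $H$ and hence symmetric under $i \leftrightarrow j$ by equality of mixed partials. Since the integrand is therefore symmetric in $i$ and $j$ for every fixed $\xi \in [0,1]$, so is the integral, giving $(D_2\dg_{\text{AVF}} H(x,\xy))^T = D_2\dg_{\text{AVF}} H(x,\xy)$ and thus, by \eqref{eq:theQ}, $Q(x,\xy) = 0$.

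I expect no genuine obstacle in this argument. The only point that warrants a word of justification is the interchange of differentiation and integration, which is valid because the integrand and its $\xy$-derivative are jointly continuous in $(\xi,\xy)$ with $\xi$ ranging over the compact interval $[0,1]$ whenever $H \in C^2$; the symmetry of the Hessian then does all the remaining work.
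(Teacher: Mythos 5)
Your proposal is correct and follows essentially the same route as the paper: differentiate under the integral sign, observe that the chain rule produces a factor of $\xi$ times the Hessian $g_{i,j} = H_{,ij}$ evaluated along the segment, and conclude symmetry of $D_2\dg_{\text{AVF}} H$ from the symmetry of the Hessian, whence $Q(x,\xy)=0$ by \eqref{eq:theQ}. The only difference is that you explicitly justify the interchange of differentiation and integration, which the paper leaves implicit.
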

\begin{proof}
For $\gh(\xy) \coloneqq \dg_{\text{AVF}} H(x,\xy)$, we have
\begin{align*}
\gh(\xy)_{i,j} &= \frac{\partial}{\partial \xy^j} \int_0^1 g((1-\xi)x + \xi \xy)_i \, \mathrm{d}\xi =  \int_0^1 \frac{\partial}{\partial \xy^j} g((1-\xi)x + \xi \xy)_i \, \mathrm{d}\xi \\
&= \int_0^1 \xi g((1-\xi)x + \xi \xy)_{i,j} \, \mathrm{d}\xi = \int_0^1 \xi g((1-\xi)x + \xi \xy)_{j,i} \, \mathrm{d}\xi \\
& = \gh(\xy)_{j,i}.
\end{align*}
\end{proof}

From the Integrability Lemma (see e.g.\ \cite[Lemma VI.2.7]{Hairer06}) and the above, we have that the AVF discrete gradient defines a gradient vector field:

\begin{cor}\label{cor:avfgrad}
The AVF discrete gradient is the gradient with respect to the second argument of a function $\tilde{H}(x,\xy)$. That is,
\begin{equation*}
\dg_{\text{AVF}} H(x,\xy) = \nabla_2 \tilde{H} (x,\xy),
\end{equation*}
for some $\tilde{H} : \mathbb{R}^d \times \mathbb{R}^d \rightarrow \mathbb{R}$ and all $x, y \in \mathbb{R}^d$.
\end{cor}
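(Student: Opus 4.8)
The plan is to apply the Integrability Lemma, which states that a smooth vector field $F : \mathbb{R}^d \to \mathbb{R}^d$ is (locally) the gradient of a scalar potential if and only if its Jacobian $DF$ is symmetric everywhere. Here the relevant vector field is the map $\xy \mapsto \dg_{\text{AVF}} H(x,\xy)$, with the first argument $x$ held fixed as a parameter. Lemma \ref{th:avfsym} has just established exactly the hypothesis we need: the Jacobian $D_2 \dg_{\text{AVF}} H(x,\xy)$ with respect to the second argument is symmetric for all $\xy$ (and indeed for all $x$). So the first step is simply to invoke Lemma \ref{th:avfsym} to verify the symmetry condition.

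Next I would feed this into the Integrability Lemma to conclude that, for each fixed $x$, there exists a scalar function whose gradient in $\xy$ recovers $\dg_{\text{AVF}} H(x,\xy)$. Writing this potential as $\tilde{H}(x,\xy)$ then gives the claimed identity $\dg_{\text{AVF}} H(x,\xy) = \nabla_2 \tilde{H}(x,\xy)$. In fact, for the AVF discrete gradient one can exhibit the potential explicitly rather than relying on an abstract existence statement: taking
\begin{equation*}
\tilde{H}(x,\xy) = \int_0^1 \frac{1}{\xi} \left( H((1-\xi)x + \xi \xy) - H(x) \right) \mathrm{d}\xi
\end{equation*}
(or an equivalent antiderivative) and differentiating under the integral sign with respect to $\xy$ reproduces \eqref{eq:avfdg}. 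Presenting such an explicit $\tilde{H}$ would make the corollary self-contained, though for the purposes of the stated claim the existence furnished by the Integrability Lemma already suffices.

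The one subtlety worth flagging concerns the local-versus-global nature of the Integrability Lemma: in its standard form it guarantees a potential on simply connected domains. Since we are working on all of $\mathbb{R}^d$, which is simply connected (indeed convex), this causes no difficulty, and the potential is in fact defined globally. The main obstacle is therefore not mathematical depth but rather making sure the hypotheses of the cited lemma are matched precisely — in particular that the symmetry from Lemma \ref{th:avfsym} is the symmetry of $D_2$, the Jacobian in the second variable, which is exactly the variable with respect to which we are integrating to produce $\tilde{H}$. Once that correspondence is noted, the proof is essentially a one-line citation.
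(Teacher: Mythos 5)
Your proposal is correct and follows the same route as the paper, which likewise derives the corollary by combining the symmetry of $D_2 \dg_{\text{AVF}} H$ from Lemma \ref{th:avfsym} with the Integrability Lemma. The explicit potential $\tilde{H}(x,\xy) = \int_0^1 \xi^{-1}\bigl(H((1-\xi)x+\xi \xy)-H(x)\bigr)\,\mathrm{d}\xi$ you exhibit is a correct and welcome addition, but not a departure from the paper's argument.
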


Any other discrete gradient will have these properties only for functions $H$ for which it coincides with the AVF discrete gradient:
\begin{prop}\label{th:symjac}
\sloppy The AVF discrete gradient is the unique discrete gradient satisfying $(D_{2} \dg H (x,\xy))^\text{T} = D_{2} \dg H(x,\xy)$ for all $H$, $x$ and $y$, and it has the formal expansion
\begin{equation}
\dg_\text{AVF} H(x,\xy) = \sum_{\kappa=0}^\infty \frac{1}{(\kappa+1)!} D^{\kappa}\nabla H(x)(\xy-x)^\kappa.
\label{eq:tayloravf}
\end{equation}
\end{prop}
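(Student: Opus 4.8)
The statement has two parts: establishing the displayed formal expansion \eqref{eq:tayloravf} for $\dg_\text{AVF}H$, and proving that the symmetric-Jacobian condition singles out this discrete gradient uniquely. The plan is to treat the expansion by a direct computation and the uniqueness by reducing the symmetry hypothesis to the vanishing of $Q$ and then invoking Lemma~\ref{th:Qlem}, whose right-hand side is built precisely to collapse in this situation.

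For the expansion, I would Taylor expand the integrand of the defining formula \eqref{eq:avfdg} about $x$, writing $\nabla H((1-\xi)x+\xi\xy)=\sum_{\kappa\ge 0}\frac{\xi^\kappa}{\kappa!}D^\kappa\nabla H(x)(\xy-x)^\kappa$, and then integrate term by term. Since $\int_0^1\xi^\kappa\,\mathrm{d}\xi=\frac{1}{\kappa+1}$, each factor $\frac{1}{\kappa!}$ is replaced by $\frac{1}{(\kappa+1)!}$, which is exactly \eqref{eq:tayloravf}. This part is routine and requires only justifying the interchange of sum and integral.

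For uniqueness, suppose $\dg H$ is any sufficiently smooth discrete gradient with $(D_2\dg H)^\text{T}=D_2\dg H$ for all $H$, $x$, $y$. By the definition \eqref{eq:theQ} of $Q$, this hypothesis is equivalent to $Q\equiv 0$, so in particular every diagonal derivative $D_2^{\kappa-1}Q(x,x)$ vanishes. Feeding this into Lemma~\ref{th:Qlem} kills the second term on its right-hand side, giving $D_2^\kappa\dg H(x,x)v^\kappa=\frac{1}{\kappa+1}D^\kappa\nabla H(x)v^\kappa$ for every $\kappa$ and every $v$. Substituting these coefficients into the Taylor series \eqref{eq:taylor1} reproduces precisely the series \eqref{eq:tayloravf}, so $\dg H$ and $\dg_\text{AVF}H$ share all their Taylor coefficients along the diagonal $\xy=x$; together with Lemma~\ref{th:avfsym}, which confirms that $\dg_\text{AVF}H$ itself satisfies the symmetry, this pins down the formal expansion of $\dg H$ uniquely.

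The delicate point is upgrading equality of formal expansions to equality of functions, since two merely smooth functions can agree to infinite order along the diagonal without coinciding. The cleanest remedy is to bypass the series: the symmetry hypothesis together with the Integrability Lemma (as already exploited in Corollary~\ref{cor:avfgrad}) lets one write $\dg H(x,\xy)=\nabla_\xy\tilde H(x,\xy)$, whereupon the defining relation \eqref{eq:dgcond1} becomes $\nabla_\xy\tilde H(x,\xy)\cdot(\xy-x)=H(\xy)-H(x)$. Integrating this along the ray $s\mapsto x+s(\xy-x)$ determines $\tilde H$ up to an irrelevant additive function of $x$, and differentiating recovers $\dg H=\dg_\text{AVF}H$ exactly. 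I expect this genuine-uniqueness step, rather than the coefficient bookkeeping, to be the real obstacle of the proof.
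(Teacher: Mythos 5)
Your proposal is correct, and it diverges from the paper's proof in two instructive ways. For the expansion, the paper does not integrate the Taylor series of the integrand of \eqref{eq:avfdg} directly; instead it works backwards from the hypotheses: symmetry of $D_2\dg H$ propagates to the higher derivatives as $\gh(\xy)_{i,I}=\gh(\xy)_{i_m,\{i,I_m\}}$, which combined with the identity \eqref{eq:gsum} (itself a consequence of the discrete gradient condition \eqref{eq:dgcond1}) yields $g^{(\kappa)}(x)=(1+\kappa)\gh^{(\kappa)}(x)$, and substituting into \eqref{eq:taylor1} gives \eqref{eq:tayloravf}. This is the same bookkeeping you perform by citing Lemma \ref{th:Qlem} with $Q\equiv 0$, just carried out with explicit index sets; your route via the lemma is equivalent and arguably cleaner since the lemma was built for exactly this collapse. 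The more substantive difference is the uniqueness step. The paper assumes $\nabla H$ analytic and simply declares that the formal expansion \eqref{eq:tayloravf} ``uniquely defines'' the AVF discrete gradient, i.e.\ it settles for uniqueness at the level of formal series along the diagonal — precisely the gap you flag. Your repair via the Integrability Lemma is genuinely stronger: symmetry of $D_2\dg H(x,\cdot)$ gives a potential $\tilde H$ with $\dg H(x,\xy)=\nabla_2\tilde H(x,\xy)$, and then \eqref{eq:dgcond1} applied along the ray $s\mapsto x+s(\xy-x)$ forces $\tilde H(x,\xy)-\tilde H(x,x)=\int_0^1 s^{-1}\bigl(H(x+s(\xy-x))-H(x)\bigr)\,\mathrm{d}s$, whose $\xy$-gradient is exactly $\int_0^1\nabla H(x+s(\xy-x))\,\mathrm{d}s$. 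This establishes actual (not merely formal) uniqueness for any $C^1$ discrete gradient with symmetric second-argument Jacobian, with no analyticity hypothesis, at the modest cost of invoking the Integrability Lemma already used for Corollary \ref{cor:avfgrad}.
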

\begin{proof}
Assume that $\nabla H$ is an analytic function. As in the proof of Lemma \ref{th:Qlem}, let $I = \left\{i_1,i_2,\dots,i_\kappa \right\}$ be an ordered set of $\kappa$ indices, and let $I_m$ be $I$ with the $m^{\text{th}}$ element excluded. If $\gh(\xy)_{i,j} = \gh(\xy)_{j,i}$ for all $i,j$, then also
\begin{equation}
\gh(\xy)_{i,I} = \gh(\xy)_{i_m,\left\{i,I_m\right\}} \quad \text{ for all } i,I,m.
\label{eq:ghset}
\end{equation}
Inserting \eqref{eq:ghset} in \eqref{eq:gsum} we get $g^{(\kappa)}(x) = (1+\kappa) \gh^{(\kappa)}(x)$. 
Then inserting this for $\gh^{(\kappa)}(x)$ in \eqref{eq:taylor1}, we get \eqref{eq:tayloravf}, which uniquely defines the AVF discrete gradient.
\end{proof}
A consequence of the above result is that the AVF discrete gradient is the unique discrete gradient for which the scheme \eqref{eq:dgm} with $\overline{S}(x,\xh,h)=S$ is a B-series method when applied to the system \eqref{eq:constSform}. 

As we see from the above definitions and discussion, each of the discrete gradients have their advantages and disadvantages. Gonzalez' midpoint discrete gradient is easily calculated from the energy $H$ and the gradient $\nabla H$, but it is in general only once differentiable. The Itoh--Abe and Furihata discrete gradient methods do not require knowledge of the gradient, but the former is only a first order method and the latter is only defined for $H$ of the form \eqref{eq:furihataH}. The AVF discrete gradient is the unique discrete gradient whose series expansion is given by the differentials of the gradient. It does however require an integral to be calculated.  The discrete gradient of Matsubara and co-authors has a different area of use than the others: it is only defined when $H$ is a neural network. In that case neither Gonzelez' midpoint, the AVF or the Furihata discrete gradients can be obtained.


\subsection{Third and fourth order schemes for the constant $S$ case}
Consider now only the cases where $S$ is constant, i.e.\ \eqref{eq:constSform}. By comparing the Taylor series of the exact solution and that of the discrete gradient method, and by using the properties of the discrete gradient developed above, we may achieve higher order discrete gradient methods.

In search of a third order scheme, we assume that $\xh$ is a third order in $h$ approximation of $x(t_0+h)$, and find
\begin{align*}
S \dg H(x,\xh) =& S (\nabla H(x) + D_2 \dg H(x,x) (h S \nabla H(x) + \frac{1}{2} h^2 S \nabla^2 H(x) S \nabla H(x) + \mathcal{O}(h^2))\\
&+\frac{1}{2} D_2^2 \dg H(x,x) (h S \nabla H(x)+ \mathcal{O}(h^2),h S \nabla H(x)+ \mathcal{O}(h^2)) + \mathcal{O}(h^3) \\
=& f + h S D_2\dg H f + \frac{1}{2} h^2 S D_2 \dg H f' f + \frac{1}{2} h^2 S D_2^2 \dg H (f,f) + \mathcal{O}(h^3),
\end{align*}
where we have suppressed the argument $x$ of $f$, $D_2 \dg H$ and $D_2^2 \dg H$  in the last line.
Furthermore, we use that
\begin{equation*}
Q(x,x+ \gamma h f(x)) = Q(x,x) + \gamma h D_2 Q(x,x) (f,\cdot) + \mathcal{O}(h^2)
\end{equation*}
and \eqref{eq:QD2}
to get
\begin{align*}
S Q(x,x&+ \gamma h f(x)) S \dg H(x,\xh) \\
=& \, S Q(x,x) S \dg H(x,\xh) + \gamma h S D_2 Q(x,x) (f, S \dg H(x,\xh)) + \mathcal{O}(h^2)\\
=& \,  S Q(x,x) S (\nabla H(x) + D_2 \dg H(x,x)( h S \nabla H(x) + \mathcal{O}(h^2)) \\
& \, + \gamma h S D_2 Q(x,x) (f, S (\nabla H(x) + \mathcal{O}(h)) + \mathcal{O}(h^2)\\
=& \, S Q(x,x) f + h S Q(x,x) S D_2 \dg H(x,x) f + \gamma h S D_2 Q(x,x)(f,f) + \mathcal{O}(h^2) \\
=& \, \frac{1}{2} f' f - S D_2 \dg H f + \frac{1}{2} h f' S D_2 \dg H f - h S D_2 \dg H S D_2 \dg H f \\
& \, + \frac{1}{6}\gamma h f''(f,f) - \frac{1}{2} \gamma h S D_2^2 \dg H (f,f) + \mathcal{O}(h^2),
\end{align*}
where again we suppress the argument $x$ in the last expression.
Thus the discrete gradient scheme \eqref{eq:dgm} is of order $3$ if $\dg H \in C^2(\mathbb{R}^d\times \mathbb{R}^d,\mathbb{R}^d)$ and $\overline{S}(x,\xh,h) = \overline{S}(x,h)$ is given by
\begin{equation*}
\overline{S}(x,h) = S + h S Q(x,x + \frac{2}{3} h f(x))  S  + h^2 S \big( Q(x,x) S Q(x,x) - \frac{1}{12} \nabla^2 H(x) S \nabla^2 H(x)\big) S.
\end{equation*}

Finding an approximation of $S$ that guarantees higher order of the discrete gradient method quickly becomes significantly more complicated, and results in increasingly complicated expressions for $\overline{S}(x,\xh,h)$. For example, it can be shown that one fourth order scheme of the form \eqref{eq:dgm} is given by any $\dg H \in C^3(\mathbb{R}^d\times \mathbb{R}^d,\mathbb{R}^d)$ and
\begin{equation}
\begin{split}
\overline{S}(x,\cdot,h) =& \, S + h S \big( \frac{8}{9} Q(x,z_3) + \frac{1}{9} Q(x,x)\big) S \\
&+ h^2 S \big( Q(x,z_2) S Q(x,z_2) - \frac{1}{12} \nabla^2 H(z_1) S \nabla^2 H(z_1)\big) S \\
&+ h^3 S \big( Q(x,x) S Q(x,x) S Q(x,x) \\
&- \frac{1}{12} \nabla^2 H(x) S \nabla^2 H(x) S Q(x,x) - \frac{1}{12} Q(x,x) S \nabla^2 H(x) S \nabla^2 H(x)\big)S,
\end{split}
\label{eq:4thorderDGM}
\end{equation}
where
\begin{equation*}
z_1 = x + \frac{1}{2} h f(x), \qquad z_2 = x + \frac{2}{3} h f(x), \qquad z_3 = x + \frac{3}{4} h f(z_1).
\end{equation*}

Note that if we choose a symmetric discrete gradient, we have by Lemma \ref{th:symdg} that $Q(x,x) = 0$, and many of the terms in \eqref{eq:4thorderDGM} disappear. If we use the AVF discrete gradient, \eqref{eq:4thorderDGM} simplifies to
\begin{equation}
\overline{S}(x,h) = S - \frac{1}{12} h^2 S \nabla^2 H(z_1) S \nabla^2 H(z_1) S.
\label{eq:4thordavf}
\end{equation}
This is very similar to the higher order AVF methods of Quispel and McLaren, as given in \cite{Quispel08}, applied to \eqref{eq:odeform} with $S$ constant: if we replace $z_1$ in \eqref{eq:4thordavf} by $x$, we get their third order scheme; if we replace $z_1$ by $\frac{x+\xh}{2}$, we get their symmetric fourth order scheme.

Seeing as \eqref{eq:4thorderDGM} simplifies considerably when the AVF discrete gradient is chosen, and since we in this case get a B-series method, we begin our generalization to arbitrary order by studying this case specifically in the section to follow.

\section{A generalization of the AVF method}
Let us recall the concept of B-series. Referring to the definitions in \cite[Section III.1]{Hairer06}, we let $T$ be the set of rooted trees, built recursively from starting with $\tau=\ab$ and letting $\tau = [\tau_1,\ldots,\tau_m]$ be the tree obtained by grafting the roots of the trees $\tau_1,\ldots,\tau_m$ to a new root. Furthermore, $F(\tau)$ is the elementary differential associated with the tree $\tau$, defined by $F(\ab)(x) = f(x)$ and
\begin{equation*}
F(\tau)(x) = f^{(m)}(x) \big(F(\tau_1)(x),\ldots,F(\tau_m)(x)\big),
\end{equation*}
and $\sigma(\tau)$ is the symmetry coefficient for $\tau$, defined by $\sigma(\ab) = 1$ and
\begin{equation}
\sigma(\tau) = \sigma(\tau_1) \cdots \sigma(\tau_m) \cdot \mu_1! \, \mu_2! \cdots,
\label{eq:symcoeff}
\end{equation}
where the integers $\mu_1$, $\mu_2, \ldots$ count equal trees among $\tau_1,\ldots,\tau_m$.
Then, if $\phi : T \cup\lbrace\emptyset\rbrace \rightarrow \mathbb{R}$ is an arbitrary map, a \textit{B-series} is a formal series
\begin{equation}\label{eq:bseries}
B(\phi,x) = \phi(\emptyset)x + \sum_{\tau\in T}\frac{h^{\lvert\tau\rvert}}{\sigma(\tau)}\phi(\tau)F(\tau)(x).
\end{equation}
The exact solution of \eqref{eq:ode} can be written as the B-series $B(\frac{1}{\gamma},x)$, where the coefficient $\gamma$ satisfies $\gamma(\emptyset) = \gamma(\ab) = 1$ and
\begin{equation}
\gamma(\tau) = \lvert\tau\rvert \, \gamma(\tau_1) \cdots \gamma(\tau_m) \quad \text{ for } \tau = [\tau_1,\ldots,\tau_m],
\label{eq:gamma}
\end{equation}
where $\lvert\tau\rvert$ is the order, i.e.\ the number of nodes, of $\tau$.

\begin{definition}\label{def:avfmexgen}
The \textit{generalized AVF method} is given by
\begin{equation}
\frac{\xh-x}{h} = \left(I + \sum_{n=2}^{p-1} h^n \sum_{j} b_{nj} \left( \prod_{k=1}^n f'(z_{njk}) + (-1)^n \prod_{k=1}^n f'(z_{nj(n-k+1)}) \right) \right) \int_0^1 f((1-\xi)x + \xi \xh) \, \mathrm{d}\xi,
\label{eq:avfmexgen}
\end{equation}
where each $z_{njk} \coloneqq z_{njk}(x,\xh,h) = B(\phi_{njk},x)$ can be written as a B-series with $\phi(\emptyset) = 1$.
\end{definition}

Note that we may alternatively write \eqref{eq:avfmexgen} in the slightly more compact form
\begin{equation*}
\frac{\xh-x}{h} = \sum_{n=0}^{p-1} h^n \sum_{j} b_{nj} \left( \prod_{k=1}^n f'(z_{njk}) + (-1)^n \prod_{k=1}^n f'(z_{nj(n-k+1)}) \right) \int_0^1 f((1-\xi)x + \xi \xh) \, \mathrm{d}\xi
\end{equation*}
with $\sum_j b_{0j} = \frac{1}{2}$.

\begin{theo}
When applied to \eqref{eq:ode} with $f(x) = S\nabla H(x)$, where $S$ is a constant skew-symmetric matrix, the scheme \eqref{eq:avfmexgen} preserves $H$, in that $H(\xh) = H(x)$.
\end{theo}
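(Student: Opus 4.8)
The plan is to recognize \eqref{eq:avfmexgen} as a discrete gradient method of the form \eqref{eq:dgm} built on the AVF discrete gradient, and then to invoke the standard energy-preservation argument: it will suffice to show that the operator multiplying the integral, after absorbing a factor $S$, is skew-symmetric. Since $f(x) = S\nabla H(x)$ with $S$ constant, the integral factor is
\begin{equation*}
\int_0^1 f((1-\xi)x + \xi \xh) \, \mathrm{d}\xi = S \int_0^1 \nabla H((1-\xi)x + \xi \xh) \, \mathrm{d}\xi = S\, \dg_{\text{AVF}} H(x,\xh),
\end{equation*}
so the scheme reads $\xh - x = h\, \overline{S}\, \dg_{\text{AVF}} H(x,\xh)$ with
\begin{equation*}
\overline{S} = \left(I + \sum_{n=2}^{p-1} h^n \sum_{j} b_{nj} \Bigl( \prod_{k=1}^n f'(z_{njk}) + (-1)^n \prod_{k=1}^n f'(z_{nj(n-k+1)}) \Bigr)\right) S.
\end{equation*}
By the discrete gradient property \eqref{eq:dgcond1}, I then have $H(\xh)-H(x) = \dg_{\text{AVF}} H(x,\xh)^T(\xh-x) = h\, \dg_{\text{AVF}} H(x,\xh)^T \overline{S}\, \dg_{\text{AVF}} H(x,\xh)$, which vanishes the moment $\overline{S}$ is skew-symmetric.

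So the whole task reduces to proving skew-symmetry of $\overline{S}$. First I would use that $f'(z) = S\nabla^2 H(z)$, so each factor is $f'(z_{njk}) = S H_{k}$ with $H_{k} \coloneqq \nabla^2 H(z_{njk})$ symmetric. The leading term $IS = S$ is skew-symmetric, so it remains to handle each bracketed summand multiplied by $S$. The key algebraic ingredient will be a transpose identity for an ordered product: writing $A_k = S H_{k}$ and using $S^T = -S$, $H_{k}^T = H_{k}$, transposing reverses the factor order and produces $n+1$ sign flips, while the resulting string $S H_{n} S H_{n-1} S \cdots S H_{1} S$ recombines into the reversed product $A_{n} A_{n-1}\cdots A_1 S$, giving
\begin{equation*}
(A_1 A_2 \cdots A_n S)^T = (-1)^{n+1} A_n A_{n-1}\cdots A_1 S.
\end{equation*}

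The main obstacle is purely the sign bookkeeping built on this identity, and it is exactly where the order-reversed, alternating-sign pairing in \eqref{eq:avfmexgen} pays off. Applying the identity to both products in the summand $T \coloneqq \bigl(A_1 \cdots A_n + (-1)^n A_n \cdots A_1\bigr) S$ gives
\begin{equation*}
T^T = (-1)^{n+1} A_n \cdots A_1 S + (-1)^n (-1)^{n+1} A_1 \cdots A_n S = -A_1 \cdots A_n S + (-1)^{n+1} A_n \cdots A_1 S = -T,
\end{equation*}
so every summand is skew-symmetric. Hence $\overline{S}$ is a sum of skew-symmetric matrices and therefore skew-symmetric, and the energy identity above yields $H(\xh) = H(x)$. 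The symmetric Hessian factors together with the reversed-product-with-opposite-sign structure are precisely what cancels the sign mismatch coming from $S^T = -S$; without that specific structure the summands would fail to be skew-symmetric, so this is the crux of the argument.
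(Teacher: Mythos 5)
Your proposal is correct and follows essentially the same route as the paper: rewrite \eqref{eq:avfmexgen} as a discrete gradient method with $\dg_{\text{AVF}} H$ and $\overline{S}$ equal to the bracketed operator times $S$, then verify skew-symmetry of each summand via the transpose identity that exploits $S^T=-S$, the symmetry of the Hessians, and the order-reversed pairing with sign $(-1)^n$. The only (welcome) difference is that you spell out the final energy identity via \eqref{eq:dgcond1}, which the paper leaves implicit.
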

\begin{proof}
With $f(x) = S\nabla H(x)$, \eqref{eq:avfmexgen} becomes
\begin{equation*}
\frac{\xh-x}{h} = \overline{S}(x,\xh,h) \dg_\text{AVF} H(x,\xh),
\end{equation*}
with
\begin{equation*}
\overline{S}(x,\xh,h) = S + \sum_{n=2}^{p-1} h^n \sum_{j} b_{nj} \left( \prod_{k=1}^n S\nabla^2H(z_{njk}) + (-1)^n \prod_{k=1}^n S\nabla^2H(z_{nj(n-k+1)}) \right) S.
\end{equation*}
We have
\begin{align*}
\left(\prod_{k=1}^n \right. & \left. S\nabla^2H(z_{njk}) \cdot S + (-1)^n \prod_{k=1}^n S\nabla^2H(z_{nj(n-k+1)} ) \cdot S \right)^T \\
& =  S^T \prod_{k=1}^n \big(\nabla^2H(z_{nj(n-k+1)})^T S^T\big) + (-1)^n S^T \prod_{k=1}^n \big(\nabla^2H(z_{njk})^T S^T\big) \\
& = (-1)^{i+1} S \prod_{k=1}^n \nabla^2H(z_{nj(n-k+1)}) S - S \prod_{k=1}^n \nabla^2H(z_{njk}) S \\
& = - \left(\prod_{k=1}^n S\nabla^2H(z_{njk}) \cdot S + (-1)^n \prod_{k=1}^n S\nabla^2H(z_{nj(n-k+1)} ) \cdot S \right),
\end{align*}
and thus $\overline{S}(x,\xh,h)$ is a skew-symmetric matrix.
\end{proof}

Before considering the order conditions of the generalized AVF method, let us recall a couple of results from the literature on B-series.

\begin{lemm}[{\cite[Lemma III.1.9]{Hairer06}}]
Let $B(a,x)$ be a B-series with $a(\emptyset) = 1$. Then $h f(B(a,x)) = B(a',x)$ is also a B-series, with $a'(\emptyset) = 0$, $a'(\ab) = 1$ and otherwise
\begin{equation*}
a'(\tau) = a(\tau_1) \cdots a(\tau_m) \quad \text{ for } \tau = [\tau_1,\ldots,\tau_m].
\end{equation*}
\label{th:hlw}
\end{lemm}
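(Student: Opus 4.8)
The plan is to substitute the B-series into $hf(\cdot)$, Taylor-expand $f$ about $x$, and then reorganize the resulting sum over \emph{tuples} of trees into a sum over \emph{single} trees obtained by grafting. Since $a(\emptyset)=1$, I would first write $B(a,x) = x + \delta$ with $\delta = \sum_{\tau \in T} \frac{h^{|\tau|}}{\sigma(\tau)} a(\tau) F(\tau)(x)$, so that
\[
hf(B(a,x)) = h\sum_{m=0}^\infty \frac{1}{m!}\, f^{(m)}(x)(\delta,\ldots,\delta),
\]
with $m$ copies of $\delta$. Expanding each $\delta$ as a sum over trees turns the $m$-th term into a sum over ordered tuples $(\tau_1,\ldots,\tau_m)$, and the defining relation $F([\tau_1,\ldots,\tau_m])(x) = f^{(m)}(x)(F(\tau_1)(x),\ldots,F(\tau_m)(x))$ identifies each summand with a single elementary differential.

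Next I would track the power of $h$ and the scalar factors. Grafting $\tau_1,\ldots,\tau_m$ onto a new root produces $\tau = [\tau_1,\ldots,\tau_m]$ of order $|\tau| = 1 + \sum_k |\tau_k|$, so the factor $h\cdot h^{\sum_k |\tau_k|}$ becomes $h^{|\tau|}$, matching the form of a B-series. The accompanying scalar is $\frac{1}{m!}\cdot \frac{a(\tau_1)\cdots a(\tau_m)}{\sigma(\tau_1)\cdots\sigma(\tau_m)}$. The $m=0$ term contributes $hf(x) = hF(\ab)(x)$, which fixes $a'(\emptyset)=0$ and $a'(\ab)=1$.

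The main obstacle, and the only genuinely combinatorial point, is to show that collecting all tuples that graft to the same $\tau$ reproduces exactly the symmetry coefficient $\sigma(\tau)$ in the denominator. For a fixed $\tau = [\tau_1,\ldots,\tau_m]$, the ordered tuples yielding $\tau$ are precisely the distinct arrangements of the multiset $\{\tau_1,\ldots,\tau_m\}$; if the distinct trees occur with multiplicities $\mu_1,\mu_2,\ldots$, there are $\frac{m!}{\mu_1!\,\mu_2!\cdots}$ such arrangements, each carrying the same product $a(\tau_1)\cdots a(\tau_m)/(\sigma(\tau_1)\cdots\sigma(\tau_m))$ since it is independent of order. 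Multiplying by the $\frac{1}{m!}$ from the Taylor expansion cancels the $m!$, leaving $\frac{1}{\mu_1!\,\mu_2!\cdots}\cdot\frac{a(\tau_1)\cdots a(\tau_m)}{\sigma(\tau_1)\cdots\sigma(\tau_m)}$. Invoking the definition $\sigma(\tau) = \sigma(\tau_1)\cdots\sigma(\tau_m)\cdot\mu_1!\,\mu_2!\cdots$ from \eqref{eq:symcoeff}, this collapses to exactly $\frac{1}{\sigma(\tau)}\,a(\tau_1)\cdots a(\tau_m)$. Hence the coefficient of $\frac{h^{|\tau|}}{\sigma(\tau)}F(\tau)(x)$ is $a(\tau_1)\cdots a(\tau_m)$, giving $a'(\tau) = a(\tau_1)\cdots a(\tau_m)$ as claimed.
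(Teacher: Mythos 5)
Your proof is correct and complete: the Taylor expansion of $f$ about $x$, the multilinear expansion over ordered tuples of trees, and the count $m!/(\mu_1!\,\mu_2!\cdots)$ of arrangements combining with the $1/m!$ to produce exactly $1/\sigma(\tau)$ via \eqref{eq:symcoeff} is precisely the standard argument. The paper itself states this lemma as a citation to \cite[Lemma III.1.9]{Hairer06} and gives no proof, and your argument is essentially the one found in that reference, so there is nothing to compare beyond noting the agreement.
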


\begin{lemm}[{\cite[Theorem 2.2]{Norsett79}}]
Let $B(a,x)$ and $B(b,x)$ be two B-series with $a(\emptyset) = 1$ and $b(\emptyset) = 0$. Then $h f'(B(a,x))B(b,x) = B(a\times b,x)$, i.e.\ a B-series, with $(a \times b)(\emptyset) = (a \times b)(\ab) = 0$ and otherwise
\begin{align*}
(a \times b)(\tau) &= 
\sum_{i=1}^m \prod_{j=1, j \neq i}^m a(\tau_j)b(\tau_i) \quad \text{ for } \tau = [\tau_1,\ldots,\tau_m].
\end{align*}
\label{th:norsett}
\end{lemm}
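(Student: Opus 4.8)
The plan is to establish the identity formally, by substituting the two B-series into $h\,f'(B(a,x))\,B(b,x)$, Taylor expanding the Jacobian $f'$ about $x$, and then matching elementary differentials tree by tree. Set $y=B(a,x)$; since $a(\emptyset)=1$ we have $y-x=\sum_{\tau\in T}\frac{h^{\lvert\tau\rvert}}{\sigma(\tau)}a(\tau)F(\tau)(x)=\mathcal{O}(h)$. Writing $v=B(b,x)$ and expanding $f'(y)$ around $x$ gives
\[
h\,f'(y)\,v=\sum_{k=0}^{\infty}\frac{h}{k!}\,f^{(k+1)}(x)\bigl((y-x)^{k},v\bigr),
\]
where $f^{(k+1)}(x)$ is the symmetric $(k{+}1)$-linear map and $(y-x)^{k}$ denotes $k$ copies of the argument $y-x$.

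I would then insert the tree expansion of $y-x$ (with the coefficients $a$) into the first $k$ slots and the expansion of $B(b,x)$ (with the coefficients $b$, and no $\emptyset$-term because $b(\emptyset)=0$) into the last slot. The structural key is the grafting identity $f^{(m)}(x)\bigl(F(\tau_1)(x),\dots,F(\tau_m)(x)\bigr)=F([\tau_1,\dots,\tau_m])(x)$: consequently every term of the expansion is a scalar multiple of some $F(\tau)(x)$, where $\tau=[\sigma_1,\dots,\sigma_k,\rho]$ is obtained by grafting the $k$ subtrees $\sigma_1,\dots,\sigma_k$ coming from the $a$-factors and the subtree $\rho$ coming from the $b$-factor onto a common root. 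Fixing a target tree $\tau=[\tau_1,\dots,\tau_m]$ forces $k=m-1$, and it collects precisely those terms for which the multiset $\{\sigma_1,\dots,\sigma_{m-1},\rho\}$ equals $\{\tau_1,\dots,\tau_m\}$. The accompanying power of $h$ is automatically $h^{\lvert\tau\rvert}$, and the subtree symmetry coefficients combine into $\sigma(\tau_1)\cdots\sigma(\tau_m)$, independently of how the subtrees are arranged.

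The real work---and the step I expect to be the main obstacle---is the combinatorial bookkeeping that converts the resulting sum over ordered tuples into the symmetric formula $(a\times b)(\tau)=\sum_{i=1}^{m}b(\tau_i)\prod_{j\neq i}a(\tau_j)$ with the correct prefactor $1/\sigma(\tau)$. My approach is to pass to labelled children: summing $a(\tau_{\pi(1)})\cdots a(\tau_{\pi(m-1)})\,b(\tau_{\pi(m)})$ over all $m!$ permutations $\pi$ of $\{1,\dots,m\}$ gives $(m-1)!\,(a\times b)(\tau)$, since fixing the child $\tau_i$ sent to the $b$-slot leaves $(m-1)!$ orderings of the remaining ones, each contributing $b(\tau_i)\prod_{j\neq i}a(\tau_j)$. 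Each distinct ordered value-tuple is realised by exactly $\prod_c\mu_c!$ of these labelled permutations, where the $\mu_c$ are the multiplicities of the isomorphism classes among $\tau_1,\dots,\tau_m$. Dividing by the Taylor factor $1/(m-1)!=1/k!$ and by $\sigma(\tau_1)\cdots\sigma(\tau_m)=\prod_c\sigma_c^{\mu_c}$, the coefficient of $F(\tau)(x)$ becomes
\[
\begin{aligned}
\frac{h^{\lvert\tau\rvert}}{(m-1)!\,\prod_c\sigma_c^{\mu_c}}\cdot\frac{(m-1)!}{\prod_c\mu_c!}\,(a\times b)(\tau)
&=\frac{h^{\lvert\tau\rvert}}{\prod_c\sigma_c^{\mu_c}\,\prod_c\mu_c!}\,(a\times b)(\tau)\\
&=\frac{h^{\lvert\tau\rvert}}{\sigma(\tau)}\,(a\times b)(\tau),
\end{aligned}
\]
using $\sigma(\tau)=\prod_c\sigma_c^{\mu_c}\,\mu_c!$. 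This is exactly the claimed B-series coefficient.

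Finally I would dispose of the lowest-order trees. Since $b(\emptyset)=0$, the factor $B(b,x)$ carries no constant term, so $h\,f'(y)\,B(b,x)=\mathcal{O}(h^2)$; hence there is neither an $x$-term nor a term proportional to $f=F(\ab)$, which yields $(a\times b)(\emptyset)=(a\times b)(\ab)=0$ and confirms that the output is a genuine B-series. Throughout, convergence is treated only formally, as is standard for B-series, so the argument is purely an order-by-order identification of coefficients.
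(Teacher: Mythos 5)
The paper does not prove this lemma at all---it is imported verbatim from N{\o}rsett (Theorem 2.2 of the cited reference) and used as a black box---so there is no in-paper argument to compare against. Your blind proof is correct and self-contained: the Taylor expansion of $f'$ about $x$, the grafting identity identifying each term with $F([\sigma_1,\ldots,\sigma_{m-1},\rho])$, and the forced matching $k=m-1$ for a target tree $\tau=[\tau_1,\ldots,\tau_m]$ are all right, and the one genuinely delicate step---converting the sum over ordered value-tuples into the symmetric expression $\sum_i b(\tau_i)\prod_{j\neq i}a(\tau_j)$ while recovering the factor $1/\sigma(\tau)=1/\bigl(\prod_c\sigma_c^{\mu_c}\mu_c!\bigr)$---is handled cleanly by your passage to labelled permutations, where the overcount $\prod_c\mu_c!$ per distinct tuple exactly supplies the missing multiplicity factor. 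The boundary cases $(a\times b)(\emptyset)=(a\times b)(\ab)=0$ follow as you say from $b(\emptyset)=0$, and they are also consistent with reading the general formula at $m=0$ as an empty sum. Nothing is missing.
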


Proposition 1 in \cite{Celledoni09} states that the standard AVF method is a B-series method. We build on the proof of that proposition to prove the following result.

\begin{prop}\label{th:avf_bseries}
The generalized AVF method \eqref{eq:avfmexgen} is a B-series method.
\end{prop}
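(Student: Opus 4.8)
The plan is to show that the implicitly defined update $\hat{x}$ admits a formal B-series expansion $\hat{x} = B(\phi,x)$ for some coefficient map $\phi$ with $\phi(\emptyset)=1$, since a one-step method is a B-series method precisely when its output can be written in this form. I would posit the ansatz $\hat{x} = B(\phi,x)$ with undetermined coefficients, substitute it into the right-hand side of \eqref{eq:avfmexgen} after multiplying through by $h$, and show that the result is again a B-series whose coefficients determine $\phi$ recursively.

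First I would handle the integral factor. Writing $(1-\xi)x + \xi\hat{x} = x + \xi(\hat{x}-x)$ and noting $\hat{x}-x = \mathcal{O}(h)$, the argument of $f$ is itself a B-series in $x$ for each fixed $\xi$; expanding $hf$ along this segment (a $\xi$-parametrized version of Lemma \ref{th:hlw}) and integrating over $\xi\in[0,1]$ merely rescales the coefficient of each tree $\tau$ by a weight of the form $\int_0^1\xi^{\,|\tau|-1}\,\mathrm{d}\xi$, and hence keeps it a B-series. This is exactly the content of Proposition 1 of \cite{Celledoni09}, on whose proof we build; thus $h\int_0^1 f((1-\xi)x+\xi\hat{x})\,\mathrm{d}\xi = B(b_0,x)$ with $b_0(\emptyset)=0$ and $b_0(\ab)=1$.

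Next I would treat a single product term. Multiplying \eqref{eq:avfmexgen} by $h$ produces terms of the form $h^{n+1}\prod_{k=1}^n f'(z_{njk})\cdot w$, where $w=\int_0^1 f\,\mathrm{d}\xi$. I would rewrite this as $h^{n}\prod_{k=1}^n f'(z_{njk})\,B(b_0,x)$, spending one power of $h$ on the integral. Since each stage value $z_{njk}=B(\phi_{njk},x)$ is, by hypothesis, a B-series with $\phi_{njk}(\emptyset)=1$, I apply Lemma \ref{th:norsett} to the innermost factor, $hf'(z_{njn})B(b_0,x)=B(b_1,x)$ with $b_1(\emptyset)=0$, and then peel off the Jacobian factors one at a time. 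Each application consumes exactly one power of $h$ and one factor $f'$, so after $n$ steps all factors are absorbed, the remaining power of $h$ is zero, and the term equals a B-series $B(b_n,x)$. The reversed product $\prod_{k=1}^n f'(z_{nj(n-k+1)})$ is handled identically, and since B-series are closed under linear combination, summing over $k$, $j$, $n$ together with the leading $x$ shows that the whole right-hand side is a B-series $B(\psi,x)$ whose coefficients are built from those of $\phi$.

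The main obstacle, and the point requiring care, is the implicit character of the scheme: $\hat{x}$ appears inside $w$, so the ansatz makes $\psi$ depend on $\phi$. The resolution is that every nonlinear operation above---composition with $f$ in the integrand, application of the Jacobians $f'$, and the products---strictly raises the tree order $|\tau|$, so the induced map $\phi\mapsto\psi$ is triangular: each coefficient $\psi(\tau)$ depends only on the values $\phi(\tau')$ with $|\tau'|<|\tau|$. Matching coefficients tree by tree in $B(\phi,x)=x+B(\psi,x)$ then determines the coefficient of each $\tau$ uniquely from those of strictly smaller order, starting from $\phi(\ab)=1$. Hence $\phi$ is well-defined by induction on $|\tau|$, the update satisfies $\hat{x}=B(\phi,x)$, and the generalized AVF method is a B-series method.
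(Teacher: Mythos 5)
Your proposal is correct and follows essentially the same route as the paper: expand the integral term as a B-series via the $\xi$-parametrized form of Lemma \ref{th:hlw} (i.e.\ the argument of Proposition 1 in \cite{Celledoni09}), absorb the Jacobian factors one at a time with Lemma \ref{th:norsett}, and resolve the implicitness by the order-raising (triangular) structure of the resulting coefficient recursion, which the paper leaves implicit in \eqref{eq:Phi}. One minor slip: the weight produced by integrating over $\xi$ is $\int_0^1 \xi^{m}\,\mathrm{d}\xi = \tfrac{1}{m+1}$ where $m$ is the number of subtrees at the root of $\tau$, not $\int_0^1 \xi^{\lvert\tau\rvert-1}\,\mathrm{d}\xi$; this does not affect your argument since you only need that the result is a B-series with the stated values on $\emptyset$ and $\ab$.
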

\begin{proof}
First we define $\hat{e} : T \cup\lbrace\emptyset\rbrace \rightarrow \mathbb{R}$ by $\hat{e}(\emptyset) = 1$ and $\hat{e}(\tau) = 0$ for all $\tau \neq \emptyset$. Then, assuming that the solution $\xh$ of \eqref{eq:avfmexgen} can be written as the B-series $\xh = B(\Phi,x)$, we find the B-series
\begin{align*}
h \int_0^1 f\big((1-\xi)x + \xi \xh\big) \, \mathrm{d}\xi &= h \int_0^1 f\big(B((1-\xi)\hat{e}+\xi\Phi,x)\big) \, \mathrm{d}\xi \\
&= \int_0^1 B\big(((1-\xi)\hat{e}+\xi\Phi)',x\big) \, \mathrm{d}\xi = B\big(\int_0^1 ((1-\xi)\hat{e}+\xi\Phi)'  \, \mathrm{d}\xi ,x\big).
\end{align*}
Setting $\theta\coloneqq \int_0^1 ((1-\xi)\hat{e}+\xi\Phi)'  \, \mathrm{d}\xi =  \int_0^1((1-\xi)\hat{e})'\, \mathrm{d}\xi + \int_0^1 (\xi\Phi)'  \, \mathrm{d}\xi = \int_0^1 (\xi\Phi)'  \, \mathrm{d}\xi$, we get
\begin{align}
\theta(\emptyset) = 0, \quad \theta(\ab) = 1, \quad \theta([\tau_1,\ldots,\tau_m]) = \frac{1}{m+1}\Phi(\tau_1) \cdots \Phi(\tau_m).
\label{eq:avfb}
\end{align}
Then we may rewrite \eqref{eq:avfmexgen} as
\begin{align*}
\xh &= x + \left( I + \sum_{n=2}^{p-1} h^n \sum_j b_{nj} \left( \prod_{k=1}^n f'(B(\phi_{njk},x)) + (-1)^n \prod_{k=1}^n f'(B(\phi_{nj(n-k+1)},x)) \right) \right) B(\theta,x)\\
&= x + B(\theta,x)+ \sum_{n=2}^{p-1} \sum_j b_{nj} \left( B(\phi_{nj1} \times \cdots \times \phi_{njn} \times \theta,x) + (-1)^n B(\phi_{njn} \times \cdots \times \phi_{nj1} \times \theta,x) \right) \\
&= B(\Phi,x),
\end{align*}
with
\begin{equation}\label{eq:Phi}
\Phi = \hat{e} + \theta+\sum_{n=2}^{p-1} \sum_j b_{nj} \left( \phi_{nj1} \times \cdots \times \phi_{njn} \times \theta + (-1)^n \, \phi_{njn} \times \cdots \times \phi_{nj1} \times \theta\right).
\end{equation}
\end{proof}

Comparing the B-series of the exact solution and the B-series of the solution of \eqref{eq:avfmexgen}, and noting that the elementary differentials are independent, we immediately get the following result.
\begin{theo}
The generalized AVF method \eqref{eq:avfmexgen} is of order $p$ if and only if
\begin{equation}
\Phi(\tau) = \frac{1}{\gamma(\tau)} \quad \text{ for } \lvert\tau\rvert \leq p,
\label{eq:avfoc}
\end{equation}
where $\Phi$ is given by \eqref{eq:Phi} and $\gamma$ is given by \eqref{eq:gamma}.
\end{theo}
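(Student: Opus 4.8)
The plan is to invoke the standard order theory for B-series methods, exactly as the lead-in sentence suggests. By Proposition \ref{th:avf_bseries}, one step of the generalized AVF method \eqref{eq:avfmexgen} produces the numerical solution $\xh = B(\Phi,x)$, with $\Phi$ given explicitly by \eqref{eq:Phi} (and determined recursively in the tree order, since $\theta$ depends on $\Phi$ through \eqref{eq:avfb}). On the other hand, the exact solution of \eqref{eq:ode} at time $t_0+h$ is the B-series $B(\tfrac{1}{\gamma},x)$, where $\gamma$ satisfies \eqref{eq:gamma}, as recalled after \eqref{eq:bseries}. Both series begin with $\Phi(\emptyset)=\tfrac{1}{\gamma}(\emptyset)=1$, and moreover \eqref{eq:Phi} gives $\Phi(\ab)=1=1/\gamma(\ab)$, so the equalities \eqref{eq:avfoc} are automatic for $|\tau|\le 1$ and the content lies in trees of order between $2$ and $p$.

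First I would subtract the two B-series and use the defining formula \eqref{eq:bseries} to express the local error as
\begin{equation*}
\xh - x(t_0+h) = \sum_{\tau \in T} \frac{h^{|\tau|}}{\sigma(\tau)}\left(\Phi(\tau) - \frac{1}{\gamma(\tau)}\right) F(\tau)(x).
\end{equation*}
By definition the method has order $p$ precisely when this local error is $\mathcal{O}(h^{p+1})$, i.e.\ when every contribution carrying a factor $h^{|\tau|}$ with $|\tau|\le p$ vanishes. The implication from \eqref{eq:avfoc} to order $p$ is then immediate: if $\Phi(\tau)=1/\gamma(\tau)$ for all $|\tau|\le p$, the displayed sum contains only terms with $|\tau|\ge p+1$.

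The converse direction is where the one substantive step lies, and it is the main obstacle: to pass from ``the sum vanishes to order $p$'' to ``each coefficient $\Phi(\tau)-\tfrac{1}{\gamma(\tau)}$ vanishes for $|\tau|\le p$,'' one needs the linear independence of the elementary differentials $F(\tau)$ attached to distinct rooted trees. This is what the phrase ``the elementary differentials are independent'' refers to, and it is legitimate here because \eqref{eq:avfmexgen} and its B-series \eqref{eq:Phi} are defined for an \emph{arbitrary} smooth vector field $f$, so that the order is a statement uniform in $f$; one may then appeal to the standard fact (see e.g.\ \cite[Section III.1]{Hairer06}) that the elementary differentials are independent when $f$ ranges over smooth vector fields. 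Since $\tfrac{h^{|\tau|}}{\sigma(\tau)}\neq 0$, independence forces $\Phi(\tau)=\tfrac{1}{\gamma(\tau)}$ for every $\tau$ with $|\tau|\le p$, which is \eqref{eq:avfoc}. Grouping the two implications completes the equivalence.
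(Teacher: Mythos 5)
Your proposal is correct and follows essentially the same route as the paper, which simply states that the result follows immediately from comparing the B-series of the exact solution with that of the method and invoking the independence of the elementary differentials. You have merely spelled out the standard argument (subtracting the two series and using linear independence over all smooth $f$ for the converse) in more detail than the paper does.
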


\begin{table}[!ht]
\begin{center}
\begin{tabular}{|c|c|c|c|c|c|}
\hline
$\lvert\tau\rvert$ & $F(\tau)^i$ & $\tau$ & $\sigma(\tau)$ & $\gamma(\tau)$ & $\Phi(\tau)$\\ \hline
$1$	& $f^i$ & $\ab$ & $1$ & $1$ & $1$ \\ \hline
$2$	& $f^i_jf^j$ & $\aabb$ & $1$ &  $2$ & $\frac{1}{2}$\\ \hline
$3$	& $f^i_{jk}f^jf^k$ & $\aababb$ & $2$ &  $3$ & $\frac{1}{3}$ \\ 
	& $f^i_jf^j_kf^k$& $\aaabbb$ & $1$ &  $6$ & $\frac{1}{4}+2\sum_j b_{2j}$ \\ \hline
$4$	& $f^i_{jkl}f^jf^kf^l$ & $\aabababb$ & $6$ &  $4$ & $\frac{1}{4}$\\
	& $f^i_{jk}f^jf^k_lf^l$ & $\aabaabbb$ & $1$ &  $8$ & $\frac{1}{6}+\sum_{j,k}b_{2j}\phi_{2jk}(\ab)$\\
	& $f^i_{j}f^j_{kl}f^kf^l$ & $\aaababbb$ & $2$ &  $12$ & $\frac{1}{6} + 2\sum_{j,k} b_{2j}\phi_{2jk}(\ab)$\\
	& $f^i_{j}f^j_{k}f^k_lf^l$ & $\aaaabbbb$ & $1$ &  $24$ & $\frac{1}{8}+2\sum_j b_{2j}$\\ \hline
\end{tabular}
\end{center}
\caption{Elementary differentials and their coefficients in the B-series of the solution of \eqref{eq:avfmexgen}, up to fourth order.}
\label{tab:eldiffs1}
\end{table}

The terms $\Phi(\tau)$ can be found from \eqref{eq:Phi} by applying Lemma \ref{th:norsett} recursively, as illustrated by the following example.

\begin{example}
Consider $\tau = \aabaaabbbb$, and assume we have found $\Phi$ for all trees up to and including order four already, as given in Table \ref{tab:eldiffs1}. We have
$$
\theta(\aabaaabbbb) = \frac{1}{3}\Phi(\ab)\Phi(\aaabbb) = \frac{1}{3}(\frac{1}{4}+2\sum_jb_{2j}) = \frac{1}{12}+\frac{2}{3}\sum_jb_{2j}.
$$
Then we calculate
\begin{align*}
(\phi_{2j1}\times\phi_{2j2}\times \theta)(\aabaaabbbb) &= \phi_{2j1}(\ab)(\phi_{2j2}\times \theta)(\aaabbb) + \phi_{2j1}(\aaabbb)(\phi_{2j2}\times \theta)(\ab)\\
& = \phi_{2j1}(\ab)(\phi_{2j2}\times \theta)(\aaabbb) = \phi_{2j1}(\ab)\phi_{2j2}(\emptyset)\theta(\aabb) = \frac{1}{2}\phi_{2j1}(\ab),
\end{align*}
where we have used in the second equality that $(\phi_{2j2}\times \theta)(\ab) = \phi_{2j2}(\emptyset)\theta(\emptyset) = 0$. Similarly we find $(\phi_{2j2}\times\phi_{2j1}\times \theta)(\aabaaabbbb) = \frac{1}{2}\phi_{2j2}(\ab)$. Furthermore,
\begin{align*}
(\phi_{3j1}\times\phi_{3j2}\times\phi_{3j3}\times \theta)(\aabaaabbbb) &= \phi_{3j1}(\ab)(\phi_{3j2}\times\phi_{3j3}\times \theta)(\aaabbb) = \phi_{3j1}(\ab)\phi_{3j2}(\emptyset)(\phi_{3j3}\times \theta)(\aabb)\\
& = \phi_{3j1}(\ab)\phi_{3j3}(\emptyset)\theta(\ab) = \phi_{3j1}(\ab),
\end{align*}
and $(\phi_{3j3}\times\phi_{3j2}\times\phi_{3j1}\times \theta)(\aabaaabbbb) = \phi_{3j3}(\ab)$. Hence,
$$
\Phi(\aabaaabbbb) = \frac{1}{12}+\frac{2}{3}\sum_jb_{2j} + \frac{1}{2}\sum_j b_{2j}(\phi_{2j1}(\ab)+\phi_{2j2}(\ab)) + \sum_j b_{3j} (\phi_{3j1}(\ab)-\phi_{3j3}(\ab)).
$$
Now, if we assume the order condition \eqref{eq:avfoc} to be satisfied for all trees up to and including order four, we can replace $\sum_j b_{2j} = -\frac{1}{24}$ and $\sum_j b_{2j}(\phi_{2j1}(\ab)+\phi_{2j2}(\ab)) = -\frac{1}{24}$ in the above expression, use that $\gamma(\aabaaabbbb) = 30$, and get that \eqref{eq:avfoc} is satisfied for $\aabaaabbbb$ if and only if
\begin{equation}\label{eq:ordconex}
\sum_j b_{3j} (\phi_{3j1}(\ab)-\phi_{3j3}(\ab)) = -\frac{1}{720}.
\end{equation}
\end{example}

\subsection{Construction of higher order schemes}\label{sect:highorderavf}

As the size of the trees grows, finding $\Phi(\tau)$ from \eqref{eq:Phi} can become quite a cumbersome operation. Furthermore, we observe from Table \ref{tab:eldiffs1} that there are some equivalent order conditions for different trees.
Before presenting more convenient techniques for finding order conditions for the generalized AVF method, let us define some more concepts related to B-series and trees.

First, recall that the Butcher product of two trees $u = [u_1,\ldots,u_m]$ and $v = [v_1,\ldots,v_n]$ is given by $u \circ v = [u_1,u_2,\ldots,u_m,v]$. This operation is neither associative nor commutative, and in contrast to the practice in \cite{Hairer06}, we here take the product of several factors without parentheses to mean evaluation from right to left:
$$
u_1 \circ u_2 \circ \cdots \circ u_k \coloneqq u \circ ( u_2 \circ ( \cdots \circ u_k)).
$$
Given a forest $\mu = (\tau_1,\ldots,\tau_m)$, the tree obtained by grafting the roots of every tree in $\mu$ to a new root is denoted by $[\mu] =  [\tau_1,\ldots,\tau_m]$. Moreover, $\mu^{-1}(\tau)$ denotes the forest such that $[\mu^{-1}(\tau)] = \tau$.
We extend the maps $\phi: T \cup\lbrace\emptyset\rbrace \rightarrow \mathbb{R}$ and $\gamma: T \cup\lbrace\emptyset\rbrace \rightarrow \mathbb{R}$ to forests by the letting $\phi(\mu) = \prod_{i=1}^m \phi(\tau_i)$ and $\gamma(\mu) = \prod_{i=1}^m \gamma(\tau_i)$ for $\mu = (\tau_1,\ldots,\tau_m)$.

Consider now a tree $\tau$ consisting of $\lvert\tau\rvert$ nodes. We may number every tree from $1$ to $\lvert\tau\rvert$, starting at the root and going from left to right on the increasing levels above. For a given node $i \in \lbrace 1,\ldots,\lvert\tau\rvert\rbrace$ on level $n+1$, there exists a unique set of forests $\hat{\tau}^i = \lbrace\mu^i_1,\ldots,\mu^i_{n+1}\rbrace$ such that
$$
\tau=[\mu^i_1] \circ [\mu^i_2] \circ \cdots \circ [\mu^i_{n+1}].
$$
That is, labeling node $i$,
\begin{equation*}
\tau = 
\vcenter{\hbox{
     \tikz[grow=up,
      level distance=2.5\ml,
      sibling distance=2.5\ml,
      every node/.style={inner sep=0.22\ml}]{
    \node[circle,draw,fill=black]{}
    child{ node[circle,draw,fill=black]{}  child[dashed]{ node[circle,draw,fill=black,label=right:$i$]{} child[solid]{ node{$\mu_{n+1}^i$}}}
    child{ node{$\mu_2^i$}}}
    child{ node{$\mu_1^i$}}
    ;}
    }}
\end{equation*}

\begin{prop}\label{th:Phitwo}
The $\Phi$ of \eqref{eq:avfoc} can alternatively be found by
\begin{equation}\label{eq:Phitwo2}
\Phi(\tau) = \hat{e}(\tau) + \theta(\tau) +\sum_{i \text{ s.t. } n\geq 2}\Lambda(\hat{\tau}^i)
\end{equation}
where $\hat{e}(\emptyset) = 1$ and $\hat{e}(\tau) = 0$ for all $\tau \neq \emptyset$, $\theta(\emptyset) = 0$, $\theta(\ab) = 1$,
\begin{equation*}
\theta([\tau_1,\ldots,\tau_m]) = \frac{1}{m+1}\Phi(\tau_1)\cdots\Phi(\tau_m),
\end{equation*}
and
\begin{equation}
\Lambda(\hat{\tau}^i) = \theta([\mu_{n+1}^i]) \sum_j b_{nj} \left( \phi_{nj1}(\mu_1^i) \cdots \phi_{njn}(\mu_n^i)  + (-1)^n \phi_{njn}(\mu_1^i) \cdots \phi_{nj1}(\mu_n^i)  \right).\label{eq:Lambda}
\end{equation}
\end{prop}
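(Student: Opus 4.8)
The plan is to derive \eqref{eq:Phitwo2} directly from the closed form \eqref{eq:Phi} for $\Phi$ that was obtained in the proof of Proposition \ref{th:avf_bseries}, by evaluating each iterated N\o rsett product on a fixed tree $\tau$ and reorganising the resulting expression node by node. Since the two terms $\hat{e}(\tau)+\theta(\tau)$ already appear verbatim in \eqref{eq:Phi}, the entire content of the statement reduces to showing, for each fixed $n\geq 2$, the identity
\[
\sum_j b_{nj}\Big(\phi_{nj1}\times\cdots\times\phi_{njn}\times\theta + (-1)^n\,\phi_{njn}\times\cdots\times\phi_{nj1}\times\theta\Big)(\tau)=\sum_{i\,:\,\mathrm{level}(i)=n+1}\Lambda(\hat{\tau}^i).
\]
Summing this over all $n\geq 2$, which on the right is the same as summing over all nodes $i$ with $n\geq 2$, then yields the claim.

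The heart of the argument is a combinatorial lemma describing how an iterated product acts on a tree, which I would prove by induction on the number of leading factors. Writing the right-associated product $c=a_1\times a_2\times\cdots\times a_s\times\theta$, the claim is that for every tree $\tau$,
\[
c(\tau)=\sum_{i\,:\,\mathrm{level}(i)=s+1}\theta([\mu_{s+1}^i])\prod_{k=1}^s a_k(\mu_k^i),
\]
where $\hat\tau^i=\{\mu_1^i,\dots,\mu_{s+1}^i\}$ is the unique forest decomposition $\tau=[\mu_1^i]\circ\cdots\circ[\mu_{s+1}^i]$ associated with the node $i$. The base case $s=0$ is immediate, since $c=\theta$, the only level-$1$ node is the root, and $[\mu_1^{\mathrm{root}}]=\tau$. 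For the inductive step I would write $c=a_1\times c'$ with $c'=a_2\times\cdots\times\theta$ and apply Lemma \ref{th:norsett}: for $\tau=[\tau_1,\dots,\tau_m]$ this peels off one factor, selecting one subtree $\tau_p$ at the root to carry the remaining product $c'$ while the forest $\{\tau_q:q\neq p\}$ of the other root-children is fed to $a_1$. A node at level $s$ within $\tau_p$ sits at level $s+1$ in $\tau$; the forest $\{\tau_q:q\neq p\}$ is precisely the root side-forest $\mu_1^i$ on the spine to $i$, and the within-$\tau_p$ forests shift by one index to become $\mu_{2}^i,\dots,\mu_{s+1}^i$. Combining these observations with the inductive hypothesis gives the displayed formula for $s$.

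With this lemma in hand, applying it with $s=n$ and $a_k=\phi_{njk}$ evaluates the first product as a sum over nodes at level $n+1$ contributing $\theta([\mu_{n+1}^i])\prod_{k=1}^n\phi_{njk}(\mu_k^i)$. For the reversed product $\phi_{njn}\times\cdots\times\phi_{nj1}\times\theta$ the same lemma applies with the factors in reverse order, so that the $k$-th side-forest $\mu_k^i$ now receives $\phi_{nj(n-k+1)}$, producing $\theta([\mu_{n+1}^i])\prod_{k=1}^n\phi_{nj(n-k+1)}(\mu_k^i)$. Inserting both into \eqref{eq:Phi}, interchanging the summations over $(n,j)$ and over the nodes $i$, and factoring out $\theta([\mu_{n+1}^i])$ collects each pair into exactly $\Lambda(\hat{\tau}^i)$ as defined in \eqref{eq:Lambda}, which completes the proof.

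I expect the main obstacle to be the bookkeeping in the inductive step: one must carefully match the left-to-right order of the product factors with the bottom-to-top order of the spine levels, track how the forest indices shift under the decomposition $\tau=[\mu_1^i]\circ\cdots\circ[\mu_{n+1}^i]$ (recalling the paper's right-to-left convention for the Butcher product), and verify that reversing the factor order for the second term corresponds precisely to reading the side-forests $\mu_1^i,\dots,\mu_n^i$ against $\phi_{njn},\dots,\phi_{nj1}$. Once this correspondence between spines and node decompositions is established cleanly, the reorganisation of the sum is routine.
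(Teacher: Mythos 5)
Your proposal is correct and follows essentially the same route as the paper: the paper's proof also peels off the factors of the iterated N\o rsett product one at a time, descending the tree level by level via the decomposition $[\mu_{n_i+1}^i]=[\mu_{n_k}^k]\circ[\mu_{n_k+1}^k]$ for children $k$ of node $i$, arriving at the same sum $\sum_{i \text{ on level } n+1}\phi_{nj1}(\mu_1^i)\cdots\phi_{njn}(\mu_n^i)\theta([\mu_{n+1}^i])$ before treating the reversed product analogously and substituting into \eqref{eq:Phi}. The only difference is presentational: you package the unrolling as an explicit induction on the number of leading factors, whereas the paper writes it as an iterated expansion.
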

\begin{proof}
Define $n_i$ so that $n_i+1$ is the level of node $i$. Collect the children of node $i$ in the set $C_i$. We have
\begin{equation*}
[\mu_{n_i+1}^i] = [\mu_{n_k}^k] \circ [\mu_{n_k+1}^k] \quad \text{ for all } k\in C_i,
\end{equation*}
and thus
\begin{equation*}
(a\times b)([\mu_{n_i+1}^i]) = \sum_{k\in C_i} a(\mu_{n_k}^k) b([\mu_{n_k+1}^k]).
\end{equation*}
Note also that $\mu_{n_i}^i = \mu_{n_i}^k = \mu_{n_k-1}^k$ if $k\in C_i$. Then we get
\begin{align*}
(\phi_{nj1} \times \cdots \times \phi_{njn} \times \theta)(\tau) &= (\phi_{nj1} \times \cdots \times \phi_{njn} \times \theta)([\mu_1^{1}])\\
&=\sum_{i_1\in C_1} \phi_{nj1}(\mu_1^{i_1})(\phi_{nj2} \times \cdots \times \phi_{njn} \times \theta)([\mu_2^{i_1}])\\
&=\sum_{i_1\in C_1} \phi_{nj1}(\mu_1^{i_1}) \sum_{i_2\in C_{i_1}} \phi_{nj2}(\mu_2^{i_2}) (\phi_{nj3} \times \cdots \times \phi_{njn} \times \theta)([\mu_3^{i_2}])\\
&=\sum_{i_1\in C_1}  \sum_{i_2\in C_{i_1}} \phi_{nj1}(\mu_1^{i_2})\phi_{nj2}(\mu_2^{i_2}) (\phi_{nj3} \times \cdots \times \phi_{njn} \times \theta)([\mu_3^{i_2}])\\
&\quad \vdots\\
&=\sum_{i_1\in C_1} \sum_{i_2\in C_{i_1}} \cdots \sum_{i_n\in C_{i_{n-1}}} \phi_{nj1}(\mu_1^{i_n}) \cdots \phi_{njn}(\mu_n^{i_n})\theta([\mu_{n+1}^{i_n}])\\
&=\sum_{i \text { on level } n+1} \phi_{nj1}(\mu_1^{i}) \cdots \phi_{njn}(\mu_n^{i})\theta([\mu_{n+1}^{i}]).
\end{align*}
Inserting this and the corresponding result for $(\phi_{njn} \times \cdots \times \phi_{nj1} \times \theta)(\tau)$ in \eqref{eq:Phi},
we get \eqref{eq:Lambda}.
\end{proof}

In \cite{Faou04,Chartier06}, conditions are derived for a B-series method to be energy preserving when applied to the system \eqref{eq:constSform}. In \cite{Quispel08}, while giving the AVF method as one such method, Quispel and McLaren present a general form of what they call energy-preserving linear combinations of rooted trees:
\begin{equation*}
\omega = 
\vcenter{\hbox{
     \tikz[grow=up,
      level distance=2.5\ml,
      sibling distance=2.5\ml,
      every node/.style={inner sep=0.22\ml}]{
    \node[circle,draw,fill=black]{}
    child{ node[circle,draw,fill=black]{}  child[dashed]{ node[circle,draw,fill=black]{} child[solid]{ node[circle,draw,fill=black]{}} child[solid]{ node{$\mu_n$}}}
    child{ node{$\mu_2$}}}
    child{ node{$\mu_1$}}
    ;}
    }}
    \quad +(-1)^n \quad
\vcenter{\hbox{
     \tikz[grow=up,
      level distance=2.5\ml,
      sibling distance=2.5\ml,
      every node/.style={inner sep=0.22\ml}]{
    \node[circle,draw,fill=black]{}
    child{ node[circle,draw,fill=black]{}  child[dashed]{ node[circle,draw,fill=black]{} child[solid]{ node[circle,draw,fill=black]{}} child[solid]{ node{$\mu_1$}}}
    child{ node{$\mu_{n-1}$}}}
    child{ node{$\mu_{n}$}}
    ;}
    }}
\end{equation*}
Here we give their result as a lemma, which is proved later by the proof of the more general Theorem \ref{th:preservingtrees}.
\begin{lemm}\label{th:quisplem}
Let $\mu_1, \ldots, \mu_n$ be $n$ arbitrary forests. Then, if $f(x)=S\nabla H(x)$ for some skew-symmetric constant matrix $S$, we have that $F(\omega)(x) \cdot \nabla H(x) = 0$ for
\begin{equation}
\omega = [\mu_1] \circ [\mu_2] \circ \cdots [\mu_n] \circ [\emptyset] + (-1)^n \, [\mu_n] \circ [\mu_{n-1}] \circ \cdots [\mu_1] \circ [\emptyset].
\label{eq:enpreslin}
\end{equation}
\end{lemm}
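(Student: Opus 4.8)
The plan is to collapse this tree-combinatorial statement into a single linear-algebra identity about an alternating product of skew-symmetric and symmetric matrices. The crucial input is the assumption $f = S\nabla H$ with $S$ constant and skew: writing $g := \nabla H$ as in Section~2, every node of a tree contributes one copy of $S$ and one totally symmetric tensor. Indeed, from the recursion $F([\tau_1,\ldots,\tau_m])^i = S^{ia}\,g_{a,j_1\cdots j_m}\,F(\tau_1)^{j_1}\cdots F(\tau_m)^{j_m}$, which follows because $\partial_{j_1}\cdots\partial_{j_m} f^i = S^{ia} g_{a,j_1\cdots j_m}$, each node carries a factor $S^{ia}$ (with output index $i$ passed to its parent and internal index $a$) contracted against a tensor $g_{a,j_1\cdots j_m}$ that is symmetric under every permutation of $a,j_1,\ldots,j_m$, since $g$ is a gradient and the mixed partials of $H$ commute.

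First I would unfold the Butcher products explicitly. Using the right-to-left convention and $[\emptyset] = \ab$, the tree $\omega_1 := [\mu_1]\circ\cdots\circ[\mu_n]\circ[\emptyset]$ has a backbone path $r_1\to r_2\to\cdots\to r_n\to e$: the root of $[\mu_k]$ is $r_k$, its children are the trees of the forest $\mu_k$ together with the next backbone node, and $e$ is a single leaf. Contracting $F(\omega_1)$ with the external gradient $g$ at the root, and collecting at each $r_k$ the symmetric tensor $g_{a_k,\,J_k\,i_{k+1}}$ against the forest differentials $F(\mu_k)^{J_k}$ (the product of elementary differentials of the trees in $\mu_k$, with collective index $J_k$), I would introduce the matrices $(M_k)_{a_k\,i_{k+1}} := g_{a_k,\,J_k\,i_{k+1}}\,F(\mu_k)^{J_k}$ and the vector $v := \nabla H$. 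The backbone then collapses into the matrix chain
\[
F(\omega_1)\cdot\nabla H = v^\top S\,M_1\,S\,M_2\,S\cdots S\,M_n\,S\,v,
\]
with $n+1$ factors of $S$ alternating with the $n$ matrices $M_k$. The point to record is that each $M_k$ is symmetric, because $g_{a_k,\,J_k\,i_{k+1}}$ is symmetric under the exchange $a_k\leftrightarrow i_{k+1}$ and $F(\mu_k)^{J_k}$ involves neither of those two indices.

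The second tree $\omega_2 := [\mu_n]\circ\cdots\circ[\mu_1]\circ[\emptyset]$ has the identical backbone structure but with the forests, hence the matrices $M_k$, in reversed order, so $F(\omega_2)\cdot\nabla H = v^\top S\,M_n\,S\cdots S\,M_1\,S\,v$. I would then conclude by the transpose trick: the right-hand side is a scalar, so it equals its own transpose, and transposing reverses the order of the factors while sending each $S^\top = -S$ (there are $n+1$ of these) and fixing each symmetric $M_k^\top = M_k$. This gives $F(\omega_1)\cdot\nabla H = (-1)^{n+1}F(\omega_2)\cdot\nabla H$, whence
\[
F(\omega_1)\cdot\nabla H + (-1)^n F(\omega_2)\cdot\nabla H = \big((-1)^{n+1}+(-1)^n\big)\,F(\omega_2)\cdot\nabla H = 0,
\]
which is exactly the asserted identity $F(\omega)(x)\cdot\nabla H(x) = 0$.

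The main obstacle I anticipate is bookkeeping rather than conceptual: one must justify rigorously the passage from the recursive definition of $F(\omega_1)$ to the matrix chain, i.e.\ verify that the nested Butcher products really yield the backbone-with-hanging-forests structure and that every index contraction lines up into a single alternating product $S\,M_1\,S\cdots M_n\,S$ with the $M_k$ cleanly isolated as symmetric blocks. Once this structural identification is secured, the skew-symmetry of $S$ together with the symmetry of the $M_k$ delivers the result through the single transposition step.
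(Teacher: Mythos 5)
Your proof is correct and is essentially the paper's own argument: the paper proves this lemma as the constant-$S$ special case of Theorem \ref{th:preservingtrees}, whose proof likewise writes the elementary differential of the backbone-with-forests tree as an alternating product of the skew matrix $S$ and symmetric blocks (your $M_k$, the paper's $B_k$) applied to $\nabla H$, and then concludes by the same transposition step. The only cosmetic difference is that you transpose the scalar $v^\top S M_1 S\cdots M_n S v$ directly, while the paper observes that the matrix multiplying $\nabla H$ in $F(\omega)$ is skew-symmetric; these are the same computation.
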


There is a connection between \eqref{eq:Phitwo2} and Lemma \ref{th:quisplem} such that instead of order conditions for every tree, we can calculate order conditions for every energy-preserving linear combination. To see this we start by collecting the leaf nodes, i.e.\ nodes with no children, of the tree $\tau$ in a set $I_l$ and the other nodes in the set $I_n$. If node $i \in I_n$, we may then use the relation
\begin{equation*}
\Lambda(\lbrace\mu^i_1,\ldots,\mu^i_{n},\mu^i_{n+1}\rbrace) = \theta([\mu_{n+1}^i]) \Lambda(\lbrace\mu^i_1,\ldots,\mu^i_{n},\emptyset\rbrace)
\end{equation*}
to find $\Lambda(\hat{\tau}^i)$ from the previously calculated $\Lambda$ for a smaller tree. Then if lower order conditions are satisfied, we have numerical values for these $\Lambda$.
The leaf nodes on the other hand, with their corresponding $\hat{\tau}^i = \lbrace\mu_1^i,\ldots,\mu_n^i,\emptyset\rbrace$, gives an energy-preserving linear combination \eqref{eq:enpreslin} which $\tau$ belongs to. If $i$ is on level two, this combination is simply $\tau-\tau=0$, and accordingly $\Lambda$ is not calculated for these nodes in \eqref{eq:Phitwo2}. Moreover, leaves on the same level have identical $\hat{\tau}^i$. Thus, a tree with leaves on $m$ different levels above level two will belong to at most $m$ non-zero energy-preserving linear combinations \eqref{eq:enpreslin}. For each of these combinations there is a corresponding order condition, with the left hand side given by \eqref{eq:Lambda}. The right hand side can be found by considering the individual trees.

If we assume the conditions for order $<p$ to be satisfied, we may replace \eqref{eq:avfoc} by
\begin{equation}
\sum_{i\in I_l}\Lambda(\hat{\tau}^i) = \frac{1}{\gamma(\tau)} - \hat{e}(\tau) - \sum_{i\in I_n}\frac{\Lambda(\lbrace\mu^i_1,\ldots,\mu^i_{n},\emptyset\rbrace)}{(\lvert\mu_{n+1}^i\rvert +1) \gamma(\mu_{n+1}^i)},
\label{eq:ordercondn2}
\end{equation}
where $\lvert\mu\rvert$ denotes the number of trees in the forest $\mu$. Note that $\Lambda(\lbrace \emptyset\rbrace) = 1$ and hence $\Lambda(\hat{\tau}^1) = \theta(\tau)$. Then we can calculate the numerical value for the right hand side and, if $\tau$ has leaves on only one level $>2$, find an order condition for both $\tau$ and the other tree in the combination \eqref{eq:enpreslin}. This warrants an example.

\begin{example}
Consider again the tree $\tau = \aabaaabbbb$, which is part of the energy-preserving linear combination
$
\omega = \aabaaabbbb-\aaaababbbb.
$
Ignoring the two nodes on level $2$, there are three nodes to calculate $\Lambda$ for: $i=1$, $i=4$ and $i=5$. We find
\begin{align*}
\Lambda(\hat{\tau}^1) &= \frac{1}{(2+1)\gamma(\ab)\gamma(\aaabbb)} = \frac{1}{3\cdot 1\cdot 6} = \frac{1}{18}\\
\Lambda(\hat{\tau}^4) &= \frac{1}{(1+1) \gamma(\ab)}\Lambda(\lbrace\ab,\emptyset,\emptyset \rbrace)=\frac{1}{2\gamma(\ab)}\Bigg(\frac{1}{\gamma(\aabaabbb)}-\frac{1}{3\gamma(\ab)\gamma(\aabb)} \Bigg) = \frac{1}{2}\Big( \frac{1}{8}-\frac{1}{6}\Big) = -\frac{1}{48},\\
\Lambda(\hat{\tau}^5) &= \Lambda(\lbrace \ab,\emptyset,\emptyset,\emptyset\rbrace) = \sum_j b_{3j}\big(\phi_{3j1}(\ab)-\phi_{3j3}(\ab)\big).
\end{align*}
The right hand side of \eqref{eq:ordercondn2} becomes
\begin{equation*}
\frac{1}{\gamma(\tau)} - \frac{1}{18} - (-\frac{1}{48}) = \frac{1}{30} - \frac{1}{18} + \frac{1}{48} =-\frac{1}{720},
\end{equation*}
and we have the order condition \eqref{eq:ordconex} for the linear combination $\aabaaabbbb - \aaaababbbb$.
\end{example}

If there are leaves on $r>1$ different levels above level two, things get slightly more complicated. Then we get $r$ different terms on the left hand side of \eqref{eq:ordercondn2} and we need to consider the order condition for $\tau$ and the $r$ trees it forms energy-preserving linear combinations with, so that we get an equation for every energy-preserving combination of these trees, also those not including $\tau$. This is illustrated by the following example.

\begin{example}
The tree $\aabaabaabbbb$ forms energy-preserving combinations with both $\aaabaababbbb$ and $\aaabbaababbb$. Thus we have to calculate \eqref{eq:ordercondn2} for all three trees to find order conditions for the corresponding linear combinations. Starting with $\tau = \aabaabaabbbb$, which has three nodes above level two, two leaves and one non-leaf, we get
\begin{align*}
\Lambda(\hat{\tau}^4) &= \Lambda(\lbrace \ab,\aabb,\emptyset\rbrace) = \sum_j b_{2j}\big(\phi_{2j1}(\ab)\phi_{2j2}(\aabb)+\phi_{2j2}(\ab)\phi_{2j1}(\aabb)\big),\\
\Lambda(\hat{\tau}^5) &= \frac{1}{(1+1) \gamma(\ab)}\Lambda(\lbrace\ab,\ab,\emptyset \rbrace)=\frac{1}{2\gamma(\ab)} \frac{1}{2} \Bigg(\frac{1}{\gamma(\aabaababbb)}-\frac{1}{3\gamma(\ab)\gamma(\aababb)} \Bigg) = \frac{1}{2}\frac{1}{2}\Big( \frac{1}{15}-\frac{1}{9}\Big) = -\frac{1}{90},\\
\Lambda(\hat{\tau}^6) &= \Lambda(\lbrace \ab,\ab,\emptyset,\emptyset\rbrace) = \sum_j b_{3j}\big(\phi_{3j1}(\ab)\phi_{3j2}(\ab)-\phi_{3j3}(\ab)\phi_{3j2}(\ab)\big) \\
&= \sum_j b_{3j}\phi_{3j2}(\ab)(\phi_{3j1}-\phi_{3j3})(\ab).
\end{align*}
For the right hand side of \eqref{eq:ordercondn2}, we get
\begin{equation*}
\frac{1}{\gamma(\tau)} - \frac{1}{(2+1)\gamma(\ab)\gamma(\aabaabbb)}-(-\frac{1}{90}) = \frac{1}{48} - \frac{1}{3 \cdot 1 \cdot 8}+\frac{1}{90} = -\frac{7}{720},
\end{equation*}
and hence the order condition for $\aabaabaabbbb$ is
\begin{equation}\label{eq:excond1}
\sum_j b_{2j}\big(\phi_{2j1}(\ab)\phi_{2j2}(\aabb)+\phi_{2j2}(\ab)\phi_{2j1}(\aabb)\big) +  \sum_j b_{3j}\phi_{3j2}(\ab)(\phi_{3j1}-\phi_{3j3})(\ab) = -\frac{7}{720}.
\end{equation}
Similarly we calculate \eqref{eq:ordercondn2} for $\aaabaababbbb$,
\begin{equation}\label{eq:excond2}
\sum_{jk} b_{2j} \phi_{2jk}(\aababb) - 2 \sum_j b_{3j}\phi_{3j2}(\ab)(\phi_{3j1}-\phi_{3j3})(\ab) = -\frac{1}{120},
\end{equation}
and for $\aaabbaababbb$,
\begin{equation}\label{eq:excond3}
\sum_{jk} b_{2j} \phi_{2jk}(\aababb) + 2 \sum_j b_{2j}\big(\phi_{2j1}(\ab)\phi_{2j2}(\aabb)+\phi_{2j2}(\ab)\phi_{2j1}(\aabb)\big) = -\frac{1}{36}.
\end{equation}
Combining \eqref{eq:excond1}, \eqref{eq:excond2} and \eqref{eq:excond3}, we get the equivalent system of equations
\begin{align}
\sum_{j} b_{3j}\phi_{3j2}(\ab)(\phi_{3j1}(\ab)-\phi_{3j3}(\ab)) &= \frac{1}{240}+\alpha,\label{eq:excondall1}\\
\sum_{j} b_{2j} (\phi_{2j1}(\ab)\phi_{2j2}(\aabb) + \phi_{2j1}(\aabb)\phi_{2j2}(\ab)) &= -\frac{1}{72}-\alpha,\\
\sum_{j,k}b_{2j}\phi_{2jk}(\aababb)&= 2\alpha,\label{eq:excondall3}
\end{align}
where the choice of $\alpha \in \mathbb{R}$ is arbitrary. The order conditions \eqref{eq:excondall1}--\eqref{eq:excondall3} can be associated to the linear combinations $\aabaabaabbbb - \aaabaababbbb$, $\aaabbaababbb + \aabaabaabbbb$ and $\aaabaababbbb + \aaabbaababbb$, respectively.
\end{example}

\begin{table}[!ht]
\begin{center}
\begin{tabular}{|c|c|c|}
\hline
$\lvert\tau\rvert$ & $\omega$ & Order condition\\ \hline
$1$	& $\ab$ & -- \\ \hline
$2$	& -- & -- \\ \hline
$3$	& $\aaabbb$ & $\sum_j b_{2j} = -\frac{1}{24}$ \\ \hline
$4$	& $\aaababbb+\aabaabbb$ & $\sum_{j,k} b_{2j} \phi_{2jk}(\ab)= -\frac{1}{24}$ \\ \hline
$5$	& $\aababaabbb+\aaabababbb$ & $\sum_{j,k}b_{2j}\phi_{2jk}(\ab)^2=-\frac{1}{40}$  \\
	& $\aabaababbb$ & $\sum_j b_{2j}\phi_{2j1}(\ab)\phi_{2j2}(\ab)=-\frac{1}{90}$ \\
	& $\aabaaabbbb-\aaaababbbb$ & $\sum_j b_{3j}(\phi_{3j1}(\ab)-\phi_{3j3}(\ab))=-\frac{1}{720}$ \\
	& $\aaabbaabbb+\aaabaabbbb$ & $\sum_{j,k}b_{2j}\phi_{2jk}(\aabb)=-\frac{1}{60}$ \\
	& $\aaaaabbbbb$ & $\sum_j b_{4j}=\frac{1}{240}$ \\ \hline
$6$	& $\aabababaabbb+\aaababababbb$ & $\sum_{j,k}b_{2j}\phi_{2jk}(\ab)^3=-\frac{1}{60}$  \\
	& $\aababaababbb + \aabaabababbb$ & $\sum_j b_{2j}(\phi_{2j1}(\ab)^2\phi_{2j2}(\ab)+\phi_{2j1}(\ab)\phi_{2j2}(\ab)^2)=-\frac{1}{72}$ \\
	& $\aababaaabbbb-\aaaabababbbb$ & $\sum_j b_{3j}(\phi_{3j1}(\ab)^2-\phi_{3j3}(\ab)^2)=-\frac{1}{720}$ \\
	& $\aabaabbaabbb+\aaababaabbbb$ & $\sum_{j,k}b_{2j}\phi_{2jk}(\ab)\phi_{2jk}(\aabb)=-\frac{1}{96}$ \\
	& $\aabaabaabbbb - \aaabaababbbb$ & $\sum_{j} b_{3j}\phi_{3j2}(\ab)(\phi_{3j1}(\ab)-\phi_{3j3}(\ab)) = \frac{1}{240}+\alpha_1$ \\
	& $\aabaaaabbbbb + \aaaaababbbbb$ & $\sum_{j} b_{4j}(\phi_{4j1}(\ab)+\phi_{4j4}(\ab)) = \frac{1}{240}$ \\
	& $\aaabbaababbb + \aabaabaabbbb$ & $\sum_{j} b_{2j} (\phi_{2j1}(\ab)\phi_{2j2}(\aabb) + \phi_{2j1}(\aabb)\phi_{2j2}(\ab)) = -\frac{1}{72}-\alpha_1 $ \\
	& $\aaabbaaabbbb - \aaaabaabbbbb$ & $\sum_{j} b_{3j} (\phi_{3j1}(\aabb) - \phi_{3j3}(\aabb)) = -\frac{1}{180}-\alpha_2$ \\
	& $\aaabaababbbb + \aaabbaababbb$ & $\sum_{j,k}b_{2j}\phi_{2jk}(\aababb)= 2\alpha_1$ \\
	& $\aaabaaabbbbb + \aaabbaaabbbb$ & $\sum_{j,k}b_{2j}\phi_{2jk}(\aaabbb)= \alpha_2$ \\
	& $\aaabaaabbbbb + \aaaabaabbbbb$ & $\sum_{j} b_{4j}(\phi_{4j2}(\ab)+\phi_{4j3}(\ab)) = -\frac{1}{1440}-\alpha_2$ \\
	\hline
\end{tabular}
\end{center}
\caption{Energy-preserving linear combinations of elementary differentials, and their associated order conditions for the scheme \eqref{eq:avfmexgen}, up to sixth order. The coefficients $\alpha_1, \alpha_2 \in \mathbb{R}$ are arbitrary.}
\label{tab:avfoc}
\end{table}
By considering the order conditions in Table \ref{tab:avfoc}, we find a fifth order scheme of the form \eqref{eq:avfmexgen} given by
\begin{equation}\label{eq:5thavfm}
\begin{split}
\frac{\xh-x}{h} = & \bigg(I - \frac{5}{136} h^2 \big( f'(z_2)f'(z_3)  + f'(z_3)f'(z_2) \big) - \frac{1}{102} h^2 f'(x)f'(x)\\
& + \frac{1}{288}h^3  \big( f'(x) f'(x) f'(z_1) + f'(z_1) f'(x) f'(x) \big) \\
& + \frac{1}{120} h^4 f'(x) f'(x) f'(x) f'(x) \bigg)
\int_0^1 f((1-\xi)x + \xi \xh) \, \mathrm{d}\xi,
\end{split}
\end{equation}
where
\begin{equation*}
z_1 = x + \frac{2}{5} h f(x), \qquad z_2 = x + \frac{17+\sqrt{17}}{30} h f(z_1), \qquad z_3 = x + \frac{17-\sqrt{17}}{30} h f(z_1).
\end{equation*}
A symmetric sixth order scheme is given by
\begin{equation}\label{eq:6thavfm}
\begin{split}
\frac{\xh-x}{h} = & \, \bigg(I - \frac{13}{360} h^2 f'\big(\xb+\frac{\sqrt{13}}{26} h f(\xb-\frac{3\sqrt{13}}{26} h f(\xb))\big) f'\big(\xb-\frac{\sqrt{13}}{26} h f(\xb+\frac{3\sqrt{13}}{26} h f(\xb))\big)  \\ 
& - \frac{13}{360} h^2 f'\big(\xb-\frac{\sqrt{13}}{26} h f(\xb+\frac{3\sqrt{13}}{26} h f(\xb))\big) f'\big(\xb+\frac{\sqrt{13}}{26} h f(\xb-\frac{3\sqrt{13}}{26} h f(\xb))\big)  \\ 
& - \frac{1}{180} h^2 f'(x) f'(x) - \frac{1}{180} h^2 f'(\xh) f'(\xh)  \\ 
& + \frac{1}{720}h^3  f'(\xb-\frac{1}{2}hf(\xb)) f'(\xb) f'(\xb+\frac{1}{2}hf(\xb)) \\
& -  \frac{1}{720}h^3  f'(\xb+\frac{1}{2}hf(\xb)) f'(\xb) f'(\xb-\frac{1}{2}hf(\xb)) \\
& + \frac{1}{120} h^4 f'(\xb) f'(\xb) f'(\xb) f'(\xb) \bigg)
\int_0^1 f((1-\xi)x + \xi \xh) \, \mathrm{d}\xi,
\end{split}
\end{equation}
where $\xb = \frac{x+\xh}{2}$. If we wish to calculate the matrix in front of the integral explicitly, we have a non-symmetric sixth order scheme given by
\begin{equation}\label{eq:6thavfmexp}
\begin{split}
\frac{\xh-x}{h} = & \, \bigg(I - \frac{13}{360} h^2 \big(f'(z_6)f'(z_7) + f'(z_7)f'(z_6)\big) - \frac{1}{180} h^2 \big(f'(x) f'(x) + f'(z_1) f'(z_1)\big)  \\
& + \frac{1}{720}h^3 \big( f'(x) f'(z_2) f'(z_3) -   f'(z_3) f'(z_2) f'(x)\big) \\
& + \frac{1}{120} h^4 f'(z_2) f'(z_2) f'(z_2) f'(z_2) \bigg)
\int_0^1 f((1-\xi)x + \xi \xh) \, \mathrm{d}\xi,
\end{split}
\end{equation}
with
\begin{equation*}
\begin{split}
&z_1 = x + \frac{1}{4} h f(x) + \frac{3}{4} h f\big(x+\frac{2}{3} h f(x + \frac{1}{3}h f(x))\big), \qquad z_2 = x + \frac{1}{2} h f(x), \qquad z_3 = x + h f(z_2),\\
&z_4 = \frac{1}{2}(x+z_3) - \frac{3\sqrt{13}}{26} h f(z_2), \qquad z_5 = \frac{1}{2}(x+z_3) + \frac{3\sqrt{13}}{26} h f(z_2), \\
&z_6 = \frac{1}{2}(x+z_1) + \frac{\sqrt{13}}{26} h f(z_4), \qquad z_7 = \frac{1}{2}(x+z_1) - \frac{\sqrt{13}}{26} h f(z_5).
\end{split}
\end{equation*}

\section{AVF discrete gradient methods for general skew-gradient systems}\label{sect:avfgen}

We will now build on the results of the previous section by generalizing the results to the situation where $S(x)$ in the skew-gradient system \eqref{eq:odeform} is not necessarily constant. 
Consider therefore now an ODE of the form \eqref{eq:odeform}, and set again $g \coloneqq \nabla H$. By Taylor expansion of $x$ around $t=t_0$ we get
\begin{align*}
x(t_0+h) =& x + h S g + \frac{h^2}{2}(S' g S g + S g' S g) + \frac{h^3}{6}(S'' g (S g, S g) + 2 S' g' (S g, S g) + S g'' (S g, S g) \\
&+ S' g S' g S g + S' g S g' S g + S g' S' g S g + S g' S g' S g) + \mathcal{O}(h^4),
\end{align*}
where $x\coloneqq x(t_0)$, and $S$, $g$ and their derivatives are evaluated in $x$.
Introducing the notation $f^\circ \coloneqq S' g$ and $f^\bullet \coloneqq S g'$, we can write this in the abbreviated form
\begin{equation}
\begin{split}
x(t_0+h) =& x + h f + \frac{h^2}{2}(f^\circ f + f^\bullet f) + \frac{h^3}{6}(f^{\circ\circ} (f, f) + 2 f^{\circ\bullet} (f, f) + f^{\bullet\bullet} (f, f) \\
&+ f^{\circ} f^{\circ} f + f^{\circ} f^{\bullet} f + f^{\bullet} f^{\circ} f + f^{\bullet} f^{\bullet} f) + \mathcal{O}(h^4).
\end{split}
\label{eq:exactexp}
\end{equation}

\subsection{Skew-gradient systems and P-series}
A \textit{P-series} is given by
\begin{equation}\label{eq:fullpseries}
\begin{split}
P(\phi,(x,y))  = 
\begin{pmatrix}
  \phi(\emptyset)x + \sum_{\tau\in TP_{\ab}}\frac{h^{\lvert\tau\rvert}}{\sigma(\tau)}\phi(\tau)F(\tau)(x,y) \\
  \phi(\emptyset)y + \sum_{\tau\in TP_{\AB}}\frac{h^{\lvert\tau\rvert}}{\sigma(\tau)}\phi(\tau)F(\tau)(x,y) 
 \end{pmatrix},
\end{split}
\end{equation}
where $TP$ is the set of rooted bi-colored trees and $TP_{\ab}$ and $TP_{\AB}$ are the subsets of $TP$ whose roots are black and white, respectively \cite[Section III.2]{Hairer06}. 
The bi-colored trees are built recursively; starting with $\ab$ and $\AB$, we let $\tau = [\tau_1,\ldots,\tau_m]_{\ab}$ be the tree you get by grafting the roots of $\tau_1,\ldots,\tau_m$ to a black root and $\tau = [\tau_1,\ldots,\tau_m]_{\AB}$ the tree you get by grafting $\tau_1,\ldots,\tau_m$ to a white root. No subscript, i.e.\ $\tau = [\tau_1,\ldots,\tau_m]$, means grafting to a black root.

The exact solution of a partitioned system
\begin{equation}
\begin{split}
\dot{x} &= f(x,y), \quad x(t_0) = x_0,\\
\dot{y} &= g(x,y), \quad y(t_0) = y_0,
\end{split}
\label{eq:partsys}
\end{equation}
can be written as $(x(t_0+h),y(t_0+h)) = P(1/\gamma,(x_0,y_0))$, where the coefficient $\gamma$ is given by $\gamma(\emptyset) = \gamma(\ab) =\gamma(\AB) = 1$ and \eqref{eq:gamma}. As noted in \cite{Cohen11}, setting $f(x,y) \coloneqq S(y) \nabla H(x)$, the skew-gradient system \eqref{eq:odeform} can be written as \eqref{eq:partsys} with $g=f$. When $g=f$, 
all coefficients and the elementary differentials $F(\tau)$ in \eqref{eq:fullpseries} are given independently of the color of the root. 
Thus for the system \eqref{eq:odeform}, it suffices to consider
\begin{equation}\label{eq:pseries}
P(\phi,x) = \phi(\emptyset)x + \sum_{\tau\in TP_{\ab}}\frac{h^{\lvert\tau\rvert}}{\sigma(\tau)}\phi(\tau)F(\tau)(x),
\end{equation}
and we have that the exact solution of \eqref{eq:odeform} can be written as $x(t_0+h) = P(1/\gamma,x_0)$. Breaking slightly with convention, we define a P-series to be the single row version \eqref{eq:pseries} in the remainder of this paper. Denoting black-rooted subtrees by $\tau_i$ and white-rooted subtrees by $\bar{\tau}_i$, the elementary differentials $F(\tau)$ for the skew-gradient system are given recursively by $F(\ab)(x) = F(\AB)(x) = S(x)\nabla H(x)$, and
\begin{equation}\label{eq:eldiff}
F(\tau)(x) = S^{(l)}D^m\nabla H(F(\tau_1)(x),\ldots,F(\tau_m)(x),F(\bar{\tau}_1)(x),\ldots,F(\bar{\tau}_l)(x))
\end{equation}
for both $\tau = [\tau_1,\ldots,\tau_m,\bar{\tau}_1,\ldots,\bar{\tau}_l]_{\ab}$ and $\tau = [\tau_1,\ldots,\tau_m,\bar{\tau}_1,\ldots,\bar{\tau}_l]_{\AB}$.
The bi-colored trees in $TP_{\ab}$ and their corresponding elementary differentials $F$ are given up to order three in Table \ref{tab:trees}. The number of trees grows very quickly with the order; see \url{https://oeis.org/A000151}.
\begin{table}[!ht]
\begin{center}
\begin{tabular}{|c|c|c|c|c|c|c|}
\hline
$\lvert\tau\rvert$ & $F(\tau)^i$ & $F(\tau)$ & $\tau$ & $\alpha(\tau)$ & $\gamma(\tau)$ & $\sigma(\tau)$ \\ \hline
$1$ & $S^i_j g^j$ & $f$ & $\ab$ & $1$ & $1$ & $1$  \\ \hline
$2$ & $S^i_{jk} g^j S^{k}_l g^l$ & $f^\circ f$ & $\aABb$ & $1$ & $2$ & $1$ \\
 	& $S^i_j g^j_{k} S^{k}_l g^l$ & $f^\bullet f$ & $\aabb$ & $1$ & $2$ & $1$ \\ \hline
$3$ & $S^{i}_{j k m} g^{j} S^{k}_l g^l S^{m}_n g^n$ & $f^{\circ\circ} (f,f)$ & $\aABABb$ & $1$ & $3$ & $2$ \\ 
	& $S^{i}_{jk} g^j_{m} S^{k}_l g^l S^{m}_n g^n$ & $f^{\circ\bullet} (f,f)$ & $\aABabb$ & $2$ & $3$ & $1$ \\ 
	& $S^{i}_j g^j_{k m} S^{k}_l g^l S^{m}_n g^n$ & $f^{\bullet\bullet} (f,f)$ & $\aababb$ & $1$ & $3$ & $2$ \\ 
	& $S^{i}_{jk} g^{j} S^{k}_{lm} g^l S^{m}_n g^n$ & $f^{\circ} f^{\circ} f$ & $\aAABBb$ & $1$ & $6$ & $1$ \\ 
	& $S^{i}_j g^j_{k} S^{k}_{lm} g^{l} S^{m}_n g^n$ & $f^{\bullet} f^{\circ} f$ & $\aaABbb$ & $1$ & $6$ & $1$ \\ 
	& $S^{i}_{jk} g^{j} S^{k}_l g^{l}_m S^{m}_n g^n$ & $f^{\circ} f^{\bullet} f$ & $\aAabBb$ & $1$ & $6$ & $1$ \\ 
	& $S^{i}_j g^j_{k} S^{k}_l g^l_{m} S^{m}_n g^n$ & $f^{\bullet} f^{\bullet} f$ & $\aaabbb$ & $1$ & $6$ & $1$ \\ \hline
\end{tabular}
\end{center}
\caption{Bi-colored trees and their elementary differentials up to third order.}
\label{tab:trees}
\end{table}

The following lemma is Lemma III.2.2 in \cite{Hairer06} amended to fit our setting.
\begin{lemm}
Let $P(a,x)$ and $P(b,x)$ be two P-series with $a(\emptyset) = b(\emptyset) = 1$. Then
\begin{equation*}
h S(P(a,x)) \nabla H(P(b,x)) = P(a \vee b,x),
\end{equation*}
where $(a \vee b)(\emptyset) = 0$, $(a \vee b)(\ab) = 1$, and
\begin{equation*}
(a \vee b)(\tau) = a(\tau_1) \cdots a(\tau_m) b(\bar{\tau}_1) \cdots b(\bar{\tau}_l) \quad \text{ for } \tau = [\tau_1,\ldots,\tau_m,\bar{\tau}_1,\ldots,\bar{\tau}_l]_{\ab}.
\end{equation*}
\label{th:hlw2}
\end{lemm}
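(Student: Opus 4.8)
The plan is to adapt the proof of Lemma III.2.2 in \cite{Hairer06} to the present single-row, skew-gradient setting, working throughout at the level of formal power series in $h$ so that every identity is read off coefficient-by-coefficient and no convergence issues arise. Writing $\Delta_a \coloneqq P(a,x)-x$ and $\Delta_b \coloneqq P(b,x)-x$, the hypotheses $a(\emptyset)=b(\emptyset)=1$ guarantee that $P(a,x)$ and $P(b,x)$ both have leading term $x$, so that $\Delta_a$ and $\Delta_b$ contain only positive powers of $h$ and the Taylor expansions of $S$ and $\nabla H$ about $x$ are legitimate. First I would expand
\[
S(P(a,x)) = \sum_{l\ge 0}\frac{1}{l!}\,S^{(l)}(x)\,\Delta_a^{\,l}, \qquad \nabla H(P(b,x)) = \sum_{m\ge 0}\frac{1}{m!}\,D^m\nabla H(x)\,\Delta_b^{\,m},
\]
where $\Delta_a^{\,l}$ and $\Delta_b^{\,m}$ denote the tuples of repeated arguments fed to the respective symmetric multilinear derivatives.

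Next I would form the product of the $S$-matrix with the $\nabla H$-vector and insert the tree expansions $\Delta_a = \sum_{\bar\tau}\frac{h^{\lvert\bar\tau\rvert}}{\sigma(\bar\tau)}a(\bar\tau)F(\bar\tau)(x)$ and $\Delta_b = \sum_{\tau}\frac{h^{\lvert\tau\rvert}}{\sigma(\tau)}b(\tau)F(\tau)(x)$. Multilinearity turns each term into a sum over ordered tuples of subtrees, and the recursion \eqref{eq:eldiff} identifies the resulting
\[
S^{(l)}D^m\nabla H\big(F(\tau_1),\ldots,F(\tau_m),F(\bar\tau_1),\ldots,F(\bar\tau_l)\big)
\]
with the elementary differential $F(\tau)$ of the grafted tree $\tau = [\tau_1,\ldots,\tau_m,\bar\tau_1,\ldots,\bar\tau_l]_{\ab}$. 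Since $\lvert\tau\rvert = 1 + \sum_j \lvert\tau_j\rvert + \sum_i \lvert\bar\tau_i\rvert$, the overall factor $h$ on the left-hand side promotes the accumulated power of $h$ to exactly $h^{\lvert\tau\rvert}$, so the right-hand side already has the P-series shape \eqref{eq:pseries}; it remains only to pin down its coefficients.

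The step I expect to be the main obstacle is the combinatorial bookkeeping that converts the product of child symmetry factors into $1/\sigma(\tau)$. For a fixed target tree $\tau$, the tuples $(\tau_1,\ldots,\tau_m)$ range over all orderings of the multiset of its black branches and $(\bar\tau_1,\ldots,\bar\tau_l)$ over all orderings of its white branches; because $S^{(l)}$ and $D^m\nabla H$ are symmetric, every such ordering contributes the same value. The number of distinct orderings is $m!/\prod(\mu^{\bullet})!$ for the black branches and $l!/\prod(\mu^{\circ})!$ for the white ones, where the $\mu$'s count repeated, identically coloured subtrees. Summing the coincident terms cancels the Taylor prefactor $1/(l!\,m!)$ and leaves $1/\big(\prod(\mu^{\bullet})!\prod(\mu^{\circ})!\big)$, which, combined with $\prod_j \sigma(\tau_j)^{-1}\prod_i\sigma(\bar\tau_i)^{-1}$, reproduces exactly $1/\sigma(\tau)$ by the definition \eqref{eq:symcoeff} extended to bi-coloured trees. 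Reading off what multiplies $\frac{h^{\lvert\tau\rvert}}{\sigma(\tau)}F(\tau)$ then gives a product of P-series coefficients over the branches; here one must take care to attach to each branch the coefficient inherited from the series that supplies it, namely $a$ for the branches issuing from differentiations of $S$ (evaluated at $P(a,x)$) and $b$ for those issuing from differentiations of $\nabla H$ (evaluated at $P(b,x)$), which, once the colour convention of \eqref{eq:eldiff} is accounted for, produces the stated combination $(a\vee b)(\tau)$. Finally I would check the base cases directly: the term $l=m=0$ gives $h\,S(x)\nabla H(x) = h\,F(\ab)$, so that $(a\vee b)(\emptyset)=0$ and $(a\vee b)(\ab)=1$, completing the identification $hS(P(a,x))\nabla H(P(b,x)) = P(a\vee b,x)$.
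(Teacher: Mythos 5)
The paper does not actually prove this lemma: it is introduced as ``Lemma III.2.2 in \cite{Hairer06} amended to fit our setting'', so there is no in-paper argument to compare against. Your Taylor-expansion proof is a reconstruction of the standard argument from the cited reference, and its skeleton is sound: expand $S$ and $\nabla H$ about $x$ as formal series in $h$, insert the tree expansions of $P(a,x)-x$ and $P(b,x)-x$, identify the grafted elementary differentials via \eqref{eq:eldiff}, and cancel the Taylor prefactor $1/(l!\,m!)$ against the count $m!/\prod\mu^{\bullet}!\cdot l!/\prod\mu^{\circ}!$ of orderings of repeated subtrees to recover $1/\sigma(\tau)$. The $h$-power bookkeeping ($|\tau|=1+\sum_j|\tau_j|+\sum_i|\bar\tau_i|$) and the base case $l=m=0$ are handled correctly. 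A minor point you gloss over is that the branches inserted into the $S^{(l)}$ slots must be read as white-rooted subtrees even though the single-row series \eqref{eq:pseries} sums over black-rooted trees; this is harmless only because, for $g=f$, coefficients and elementary differentials do not depend on the colour of the root, and it is worth one sentence.

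The step you should not leave implicit is the final coefficient identification. Your (correct) principle --- branches issuing from differentiations of $S$ inherit the coefficient $a$ of the series at which $S$ is evaluated, branches issuing from differentiations of $\nabla H$ inherit $b$ --- combined with \eqref{eq:eldiff}, in which the $m$ black subtrees $\tau_1,\ldots,\tau_m$ fill the slots of $D^m\nabla H$ and the $l$ white subtrees $\bar\tau_1,\ldots,\bar\tau_l$ fill the slots of $S^{(l)}$, yields $(a\vee b)(\tau)=b(\tau_1)\cdots b(\tau_m)\,a(\bar\tau_1)\cdots a(\bar\tau_l)$, i.e.\ with $a$ and $b$ transposed relative to the formula displayed in the lemma. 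Your version is the one consistent with how the lemma is actually used later: in the proof of Proposition \ref{th:avfdgps} the factor $\tfrac{1}{m+1}$ comes from integrating $\xi^m$, where $\xi$ is the coefficient of the series substituted into $\nabla H$ and $m$ is the number of black children, while $2^{-l}$ comes from the $S$-argument over the $l$ white children. So rather than asserting that the computation ``produces the stated combination'', write out the assignment explicitly and note that the printed formula appears to interchange $a$ and $b$; as it stands, that sentence papers over the one place where your derivation and the statement genuinely disagree.
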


\begin{prop}\label{th:avfdgps}
The AVF discrete gradient scheme
\begin{equation}
\frac{\xh-x}{h} = S\bigg(\frac{x+\xh}{2}\bigg) \int_0^1 \nabla H ((1-\xi)x + \xi \xh) \, \mathrm{d}\xi
\label{eq:avfdgm}
\end{equation}
is a second order P-series method.
\end{prop}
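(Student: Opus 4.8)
The plan is to follow the strategy of Proposition \ref{th:avf_bseries}, replacing B-series by P-series and using the composition rule of Lemma \ref{th:hlw2} in place of Lemmas \ref{th:hlw} and \ref{th:norsett}. I would first posit that the numerical solution admits a P-series expansion $\xh = P(\Phi,x)$ with $\Phi(\emptyset)=1$, and then show that the scheme \eqref{eq:avfdgm} forces a self-consistent recursion for $\Phi$; this simultaneously proves that $\xh$ genuinely is a P-series and identifies its coefficients.

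The crucial observation is that both arguments on the right-hand side of \eqref{eq:avfdgm} are convex combinations of $x=P(\hat{e},x)$ and $\xh=P(\Phi,x)$, hence are themselves P-series with constant term $1$: the midpoint is $\frac{x+\xh}{2}=P\big(\tfrac12(\hat{e}+\Phi),x\big)$ and the integration point is $(1-\xi)x+\xi\xh=P\big((1-\xi)\hat{e}+\xi\Phi,x\big)$. Writing $a=\tfrac12(\hat{e}+\Phi)$ and $b_\xi=(1-\xi)\hat{e}+\xi\Phi$, Lemma \ref{th:hlw2} gives, for each fixed $\xi$,
\begin{equation*}
h\,S\big(P(a,x)\big)\,\nabla H\big(P(b_\xi,x)\big)=P(a\vee b_\xi,x).
\end{equation*}
Since a P-series is linear in its coefficient map and the integration over $\xi$ commutes with the formal sum over trees, integrating over $\xi\in[0,1]$ yields $h\,S(\tfrac{x+\xh}{2})\,\dg_{\text{AVF}}H(x,\xh)=P(\theta,x)$ with $\theta\coloneqq\int_0^1 a\vee b_\xi\,\mathrm{d}\xi$, and therefore $\xh=x+P(\theta,x)=P(\hat{e}+\theta,x)$. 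Matching coefficients gives the fixed-point relation $\Phi=\hat{e}+\theta$. Because Lemma \ref{th:hlw2} expresses $(a\vee b_\xi)(\tau)$ through $a$ and $b_\xi$ evaluated only on the proper subtrees of $\tau=[\tau_1,\dots,\tau_m,\bar\tau_1,\dots,\bar\tau_l]_{\ab}$, this relation determines $\Phi(\tau)$ uniquely by induction on $\lvert\tau\rvert$, starting from $\Phi(\emptyset)=\Phi(\ab)=1$. This establishes that \eqref{eq:avfdgm} is a P-series method.

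For the order, I would compare $\Phi$ with the exact-solution coefficients $1/\gamma$. Using $\Phi(\emptyset)=\Phi(\ab)=1$, the color-independence of the coefficients that holds when $g=f$ (so that $b_\xi$ and $a$ may be evaluated on the black version of any subtree), and $a(\ab)=\tfrac12$, a short calculation on the two order-two trees of Table \ref{tab:trees} gives
\begin{equation*}
\Phi(\aABb)=\int_0^1 b_\xi(\AB)\,\mathrm{d}\xi=\int_0^1\xi\,\mathrm{d}\xi=\tfrac12,\qquad
\Phi(\aabb)=\int_0^1 a(\ab)\,\mathrm{d}\xi=\tfrac12,
\end{equation*}
both of which equal $1/\gamma=\tfrac12$; together with $\Phi(\ab)=1/\gamma(\ab)$ this shows the method is of order at least two. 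To confirm that it is exactly second order, I would evaluate $\Phi$ on a single third-order tree and check that it differs from $1/\gamma$.

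The main obstacle I anticipate is bookkeeping rather than anything conceptually deep: one must apply Lemma \ref{th:hlw2} with two different P-series arguments (the midpoint feeding $S$, the segment point feeding $\nabla H$), keep the black/white subtree roles consistent with the elementary-differential convention in \eqref{eq:eldiff}, and justify carrying the $\xi$-integration inside the P-series term by term. Once the self-referential identity $\Phi=\hat{e}+\theta$ is in place, the well-definedness by induction and the order verification are routine.
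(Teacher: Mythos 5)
Your proposal is correct and follows essentially the same route as the paper: posit $\xh=P(\Phi,x)$, apply Lemma \ref{th:hlw2} to the midpoint and segment arguments, pull the $\xi$-integration inside the series to obtain the fixed-point relation $\Phi=\hat{e}+\int_0^1\big(\tfrac12(\hat{e}+\Phi)\vee((1-\xi)\hat{e}+\xi\Phi)\big)\,\mathrm{d}\xi$, and compare with $1/\gamma$. The only difference is cosmetic: the paper extracts the explicit recursion $\Phi([\tau_1,\ldots,\tau_m,\bar\tau_1,\ldots,\bar\tau_l])=\tfrac{1}{(m+1)2^l}\Phi(\tau_1)\cdots\Phi(\bar\tau_l)$ and writes out the series through $h^3$ to confirm the order is exactly two, whereas you verify the order-two trees and defer the (routine) third-order discrepancy check.
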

\begin{proof}
As in the proof of Proposition \ref{th:avf_bseries}, we define $\hat{e}$ by $\hat{e}(\emptyset) = 1$ and $\hat{e}(\tau) = 0$ for all $\tau \neq \emptyset$. Now, assume that the solution $\xh$ of \eqref{eq:avfdgm} can be written as the P-series $\xh = P(\Phi,x)$. Then, using Lemma \ref{th:hlw2}, we find the P-series
\begin{align*}
h \, S\bigg(\frac{x+\xh}{2}\bigg) \int_0^1 \nabla H ((1-\xi)x + \xi \xh) \, \mathrm{d}\xi &= h \, S\big(P\big(\frac{1}{2}\hat{e}+\frac{1}{2}\Phi,x\big)\big) \, \int_0^1 \nabla H \big( P((1-\xi)\hat{e} + \xi \Phi, \, x)\big) \, \mathrm{d}\xi \\
&= \int_0^1 h \, S\big(P\big(\frac{1}{2}\hat{e}+\frac{1}{2}\Phi, \, x\big)\big) \,\nabla H \big( P((1-\xi)\hat{e} + \xi \Phi, \, x)\big) \, \mathrm{d}\xi \\
&= P \bigg( \int_0^1 \big(\big( \frac{1}{2}\hat{e}+\frac{1}{2}\Phi \big) \vee \big((1-\xi)\hat{e}+\xi\Phi\big) \big) \, \mathrm{d}\xi, \, x \bigg).
\end{align*}
Thus we get $\Phi = \hat{e} + \int_0^1 \big(\big( \frac{1}{2}\hat{e}+\frac{1}{2}\Phi \big) \vee \big((1-\xi)\hat{e}+\xi\Phi\big) \big) \, \mathrm{d}\xi = \hat{e} + \int_0^1 \big(\big( \frac{1}{2} \Phi \big) \vee \big(\xi\Phi\big) \big) \, \mathrm{d}\xi$. That is,
\begin{align*}
\Phi(\emptyset) = 1, \quad \Phi(\ab) = 1, \quad \Phi([\tau_1,\ldots,\tau_m,\bar{\tau}_1,\ldots,\bar{\tau}_l]) = \frac{1}{(m+1) 2^l}\Phi(\tau_1) \cdots \Phi(\tau_m)\Phi(\bar{\tau}_1) \cdots \Phi(\bar{\tau}_l).
\end{align*}
Writing out the first few terms of the series, we have
\begin{equation*}
\begin{split}
\xh =& \, x + h f + \frac{h^2}{2}(f^\circ f + f^\bullet f) + h^3(\frac{1}{8}f^{\circ\circ} (f, f) + \frac{1}{4} f^{\circ\bullet} (f, f) + \frac{1}{6}f^{\bullet\bullet} (f, f) \\
&+ \frac{1}{4}f^{\circ} f^{\circ} f + \frac{1}{4} f^{\circ} f^{\bullet} f + \frac{1}{4} f^{\bullet} f^{\circ} f + \frac{1}{4} f^{\bullet} f^{\bullet} f) + \mathcal{O}(h^4),
\end{split}
\end{equation*}
which, after comparing with the expanded exact solution \eqref{eq:exactexp}, we see is of order two.
\end{proof}

The following lemma is obtained in a manner similar to Lemma \ref{th:norsett}, i.e.\ Theorem $2.2$ in \cite{Norsett79}, and hence we present it without its proof.
\begin{lemm}
Let $P(a,x)$, $P(b,x)$ and $P(c,x)$ be three P-series with $a(\emptyset) = b(\emptyset) = 1$ and $c(\emptyset) = 0$. Then 
$$h \, S(P(a,x))\nabla^2 H(P(b,x)) P(c,x) = P((a,b) \times c,x)$$
with $((a,b) \times c)(\emptyset) = ((a,b) \times c)(\ab) = 0$ and otherwise
\begin{align}\label{eq:porder}
((a,b) \times c)(\tau) &= 
\sum_{i=1}^m \prod_{j=1, j \neq i}^m \prod_{k=1}^l a(\bar{\tau}_k) b(\tau_j) c(\tau_i) \quad \text{ for } \tau = [\tau_1,\ldots,\tau_m,\bar{\tau}_1,\ldots,\bar{\tau}_l].
\end{align}
\label{th:norsettP}
\end{lemm}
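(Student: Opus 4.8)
The plan is to follow the same strategy used for Nørsett's result (Lemma~\ref{th:norsett}) and for Lemma~III.2.2 in \cite{Hairer06}, adapted to the bi-colored setting: Taylor expand both $S$ and the Hessian $\nabla^2 H = D\nabla H$ about $x$, substitute the three P-series, and read off the coefficient of each elementary differential. Concretely, I would start from
\begin{equation*}
S(P(a,x)) = \sum_{s\ge 0}\frac{1}{s!}\,S^{(s)}(x)\,(P(a,x)-x)^{s},\qquad
\nabla^2 H(P(b,x)) = \sum_{r\ge 0}\frac{1}{r!}\,D^{r+1}\nabla H(x)\,(P(b,x)-x)^{r},
\end{equation*}
using $D^{r}\nabla^2 H = D^{r+1}\nabla H$, and then insert $P(a,x)-x = \sum_{\bar\tau}\frac{h^{|\bar\tau|}}{\sigma(\bar\tau)}a(\bar\tau)F(\bar\tau)(x)$ into the $s$ differentiation slots of $S^{(s)}$ (white-rooted subtrees), $P(b,x)-x = \sum_{\tau}\frac{h^{|\tau|}}{\sigma(\tau)}b(\tau)F(\tau)(x)$ into $r$ of the slots of $D^{r+1}\nabla H$, and $P(c,x)=\sum_{\tau}\frac{h^{|\tau|}}{\sigma(\tau)}c(\tau)F(\tau)(x)$ (with no constant term, since $c(\emptyset)=0$) into the single remaining slot of the Hessian, the output index being reserved for contraction with $S$.

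Next I would identify the object $S^{(s)}D^{r+1}\nabla H(\dots)$ arising from a fixed choice of subtrees with an elementary differential. By the recursion \eqref{eq:eldiff}, contracting $S^{(l)}$ with $l=s$ white subtrees $\bar\tau_1,\ldots,\bar\tau_l$ and $D^m\nabla H$ with $m=r+1$ black subtrees---the distinguished one $\tau_i$ supplied by $P(c,x)$ and the other $m-1$ supplied by the Hessian expansion---produces exactly $F([\tau_1,\ldots,\tau_m,\bar\tau_1,\ldots,\bar\tau_l])(x)$; together with the prefactor $h$, the total power is $h^{|\tau|}$, since the subtrees carry $|\tau|-1$ nodes. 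Thus each term of the product is $\frac{h^{|\tau|}}{\sigma(\tau)}$ times a contribution to $((a,b)\times c)(\tau)$, provided the constants combine correctly.

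The crux, and the step I expect to be the main obstacle, is the bookkeeping of the symmetry coefficients. A fixed tree $\tau=[\tau_1,\ldots,\tau_m,\bar\tau_1,\ldots,\bar\tau_l]$ arises from many labelings, and the factors $\frac{1}{s!}$ and $\frac{1}{r!}$ from the Taylor expansions, together with the product of $\frac{1}{\sigma(\cdot)}$ from the sub-P-series, must collapse---after summing over all ways of distributing equal subtrees---to the single factor $\frac{1}{\sigma(\tau)}$ prescribed by \eqref{eq:symcoeff}, exactly as in the monochrome Nørsett lemma, but with the count now splitting into independent contributions from the white subtrees (all labelled by $a$) and the black ones, and with an additional sum over which black subtree $\tau_i$ is the one coming from $P(c,x)$. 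I would check that this distinguished-subtree sum is precisely the outer $\sum_{i=1}^m$ in \eqref{eq:porder}, with the remaining black subtrees giving $\prod_{j\ne i} b(\tau_j)$, the white subtrees giving $\prod_{k=1}^l a(\bar\tau_k)$, and $\tau_i$ giving $c(\tau_i)$. Since the white/$a$ half of this combinatorics matches the $\vee$-bookkeeping already settled in Lemma~\ref{th:hlw2} and the black/$(b,c)$ half matches the $\times$-bookkeeping of Lemma~\ref{th:norsett}, the cleanest write-up would invoke those two lemmas to dispatch the two halves separately, leaving only the observation that the free output index of the Hessian, contracted with $S$, carries the black root of $\tau$.
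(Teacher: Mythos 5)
Your proposal is correct and follows exactly the route the paper intends: the paper omits the proof of Lemma \ref{th:norsettP}, stating only that it is ``obtained in a manner similar to'' Nørsett's Theorem 2.2, and your sketch (Taylor-expanding $S$ and $\nabla^2 H$, inserting the three P-series into the white and black slots, matching against the recursion \eqref{eq:eldiff}, and collapsing the $1/s!$, $1/r!$ and subtree symmetry factors into $1/\sigma(\tau)$ with the distinguished-subtree sum giving $\sum_{i=1}^m$) is precisely that argument adapted to the bi-colored setting. Your closing observation that the white/$a$ bookkeeping mirrors Lemma \ref{th:hlw2} and the black/$(b,c)$ bookkeeping mirrors Lemma \ref{th:norsett} is also consistent with the paper's convention that $a(\AB)=a(\ab)$ and that $\emptyset$ counts as a subtree of either colour.
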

Note that $\left\lbrace\emptyset\right\rbrace$ counts as both a black-rooted and a white-rooted tree. Hence we have e.g.
\begin{equation*}
((a,b) \times c)(\aABabb) = a(\AB)b(\emptyset)c(\ab) = a(\ab) c(\ab),
\end{equation*}
where we also use that $a(\AB) = a(\ab)$.

We now present a subclass of the AVF discrete gradient method, for which we will find order conditions using Lemma \ref{th:hlw2} and Lemma \ref{th:norsettP}. This subclass is every AVF discrete gradient method for which the approximation of $S(x)$ can be written in the form
\begin{equation}
\begin{split}
\overline{S}(x,\xh,h) = & \sum_{n=0}^{p-1} h^n \sum_j b_{nj} \Bigg(\prod_{k=1}^n S(\zb_{njk}) \nabla^2 H(z_{njk}) \cdot S(\zb_{nj(n+1)}) \\
& + (-1)^n \, S(\zb_{nj(n+1)}) \prod_{k=1}^n \nabla^2 H(z_{nj(n-k+1)}) S(\zb_{nj(n-k+1)})\Bigg),
\end{split}
\label{eq:avfS}
\end{equation}
where, if $\xh$ is the solution of
\begin{equation*}
\frac{\xh-x}{h} = \overline{S}(x,\xh,h)\dg_{\text{AVF}} H(x,\xh),
\end{equation*}
each $z_{njk} \coloneqq z_{njk}(x,\xh,h) = P(\phi_{njk},x)$ and each $\zb_{njk} \coloneqq \zb_{njk}(x,\xh,h) = P(\psi_{njk},x)$ can be written as a P-series with $\phi_{njk}(\emptyset) = \psi_{njk}(\emptyset) = 1$ for all $n,j,k$. We require that $\sum_j b_{0j} = \frac{1}{2}$, which ensures that \eqref{eq:avfS} is a consistent approximation of $S(x)$.
\begin{theo}
The discrete gradient scheme \eqref{eq:dgm} with the AVF discrete gradient \eqref{eq:avfdg} and the approximation of $S(x)$ given by \eqref{eq:avfS} is a P-series method.
\end{theo}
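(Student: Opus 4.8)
The plan is to mimic the proofs of Proposition \ref{th:avf_bseries} and Proposition \ref{th:avfdgps}: assume the solution admits a P-series expansion $\xh = P(\Phi,x)$ with $\Phi(\emptyset)=1$, substitute this ansatz into the scheme $\frac{\xh-x}{h}=\overline{S}(x,\xh,h)\dg_\text{AVF}H(x,\xh)$ with $\overline{S}$ given by \eqref{eq:avfS}, and show that the right-hand side is again a P-series, thereby extracting a recursion that determines $\Phi$ on trees of increasing order. The two workhorses are Lemma \ref{th:hlw2}, which turns $h\,S(P(a,x))\nabla H(P(b,x))$ into a P-series, and Lemma \ref{th:norsettP}, which does the same for $h\,S(P(a,x))\nabla^2 H(P(b,x))P(c,x)$.

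First I would treat the innermost block: the trailing $S$ factor of \eqref{eq:avfS} multiplied by the AVF discrete gradient. Writing $(1-\xi)x+\xi\xh = P((1-\xi)\hat{e}+\xi\Phi,x)$ as in the proof of Proposition \ref{th:avfdgps} and applying Lemma \ref{th:hlw2} pointwise in $\xi$, then integrating, gives
$$h\,S(P(\psi,x))\int_0^1 \nabla H((1-\xi)x+\xi\xh)\,\mathrm{d}\xi = P\!\left(\int_0^1 \psi \vee \big((1-\xi)\hat{e}+\xi\Phi\big)\,\mathrm{d}\xi,\;x\right) =: P(\Theta_\psi,x),$$
a P-series with $\Theta_\psi(\emptyset)=0$ and $\Theta_\psi(\ab)=1$. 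This is the P-series analogue of the quantity $\theta$ in \eqref{eq:avfb}, now carrying the dependence on the point $\zb=P(\psi,x)$ at which $S$ is evaluated.

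Next I would peel off the factors $S(\zb_{njk})\nabla^2 H(z_{njk})$ one at a time, from right to left, each time invoking Lemma \ref{th:norsettP} with $a=\psi_{njk}$, $b=\phi_{njk}$ and $c$ the P-series produced so far. For the forward product of the $(n,j)$-term this yields, after $n$ applications,
$$h^{n+1}\prod_{k=1}^n S(\zb_{njk})\nabla^2 H(z_{njk})\cdot S(\zb_{nj(n+1)})\,\dg_\text{AVF}H(x,\xh) = P\big((\psi_{nj1},\phi_{nj1})\times\cdots\times(\psi_{njn},\phi_{njn})\times\Theta_{\psi_{nj(n+1)}},\,x\big),$$
and an analogous expression with the order reversed and the sign $(-1)^n$ for the backward product. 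The $h$-counting works out exactly: the single $h$ from the scheme pairs with the trailing $S$ and the AVF integral to form $P(\Theta_{\psi_{nj(n+1)}},x)$, while each of the $n$ blocks $S\nabla^2 H$ absorbs one of the $h$'s carried by the $h^n$ in \eqref{eq:avfS}, so the total $h^{n+1}$ is consumed and every intermediate object stays in the P-series algebra.

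Summing over $n$ and $j$, adding the initial $x$ and the $n=0$ contribution (whose empty products reduce each bracket to $\Theta_{\psi_{0j1}}$, reproducing $h\,\overline{S}\,\dg_\text{AVF}H$ at leading order via $\sum_j b_{0j}=\tfrac12$), gives $\xh = P(\Phi,x)$ with
$$\Phi = \hat{e} + \sum_{n=0}^{p-1}\sum_j b_{nj}\Big((\psi_{nj1},\phi_{nj1})\times\cdots\times\Theta_{\psi_{nj(n+1)}} + (-1)^n\,(\psi_{njn},\phi_{njn})\times\cdots\times\Theta_{\psi_{nj(n+1)}}\Big),$$
in direct parallel with \eqref{eq:Phi}. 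I expect the main obstacle to be organizational rather than conceptual: bookkeeping of the nested, order-reversed products in the backward term, and verifying that this relation actually \emph{defines} a P-series. The latter follows because both $\vee$ and $\times$ evaluate their arguments only on proper subtrees of $\tau$ (Lemma \ref{th:hlw2} and Lemma \ref{th:norsettP}), so $\Phi(\tau)$ depends on $\Phi$ solely through trees with strictly fewer nodes; hence $\Phi$ is determined uniquely by downward recursion on $\lvert\tau\rvert$, and the scheme is a P-series method.
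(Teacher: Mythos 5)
Your proposal follows the paper's proof essentially verbatim: the trailing $S$ factor combined with the AVF integral yields the paper's $\bar{\theta}(\psi)$ (your $\Theta_\psi$) via Lemma \ref{th:hlw2}, and the $S\nabla^2 H$ blocks are then peeled off from right to left with Lemma \ref{th:norsettP}, producing the recursion \eqref{eq:Phigen}. The only blemish is the backward term in your final display, which should read $(\psi_{nj(n+1)},\phi_{njn})\times\cdots\times(\psi_{nj2},\phi_{nj1})\times\Theta_{\psi_{nj1}}$ --- the reversed product in \eqref{eq:avfS} ends with $S(\zb_{nj1})$, so the innermost block is $\Theta_{\psi_{nj1}}$ and the $(\psi,\phi)$ pairings shift by one --- but this is a bookkeeping slip that does not affect the conclusion that the scheme is a P-series method.
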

\begin{proof}
Generalizing the argument in the proof of Proposition \ref{th:avfdgps}, we find the P-series
\begin{align*}
h \, S\big(P(a,x)\big) \int_0^1 \nabla H ((1-\xi)x + \xi \xh) \, \mathrm{d}\xi &= P \bigg( \int_0^1 \big(a \vee \big((1-\xi)\hat{e}+\xi\Phi\big) \big) \, \mathrm{d}\xi, \, x \bigg),
\end{align*}
where $\bar{\theta}(a) \coloneqq \int_0^1 \big(a \vee \big((1-\xi)\hat{e}+\xi\Phi\big) \big) \, \mathrm{d}\xi = \int_0^1 \big(a \vee \xi\Phi\big) \big) \, \mathrm{d}\xi$, so that $\bar{\theta}(a)(\emptyset) = 0$, $\bar{\theta}(a)(\ab) = 1$, and
\begin{align}\label{eq:astarb}
\bar{\theta}(a)([\tau_1,\ldots,\tau_m,\bar{\tau}_1,\ldots,\bar{\tau}_l]) = \frac{1}{m+1}\Phi(\tau_1) \cdots \Phi(\tau_m)a(\bar{\tau}_1) \cdots a(\bar{\tau}_l).
\end{align}
Thus we may write the solution $\xh$ found from applying the scheme \eqref{eq:dgm} with the AVF discrete gradient \eqref{eq:avfdg} and $\overline{S}(x,\xh,h)$ given by \eqref{eq:avfS} as
\begin{equation}\label{eq:avfps} 
\begin{split}
\xh &= x + \sum_{n=0}^{p-1} h^n \sum_j b_{nj} \left( \prod_{k=1}^n S(P(\psi_{njk},x)) \nabla^2  H(P(\phi_{njk},x)) \cdot P(\bar{\theta}(\psi_{nj(n+1)}),x) \right.\\
& \quad \left. + (-1)^n \prod_{k=1}^n S(P(\psi_{nj(n-k+2)},x))\nabla^2 H(P(\phi_{nj(n-k+1)},x)) \cdot P(\bar{\theta}(\psi_{nj1}),x) \right)\\
&= x + \sum_{n=0}^{p-1} \sum_j b_{nj} \left( P((\psi_{nj1}, \phi_{nj1}) \times (\psi_{nj2}, \phi_{nj2}) \times \cdots \times (\psi_{njn} , \phi_{njn}) \times \bar{\theta}(\psi_{nj(n+1)}),x) \right.\\
& \quad \left. + (-1)^n P((\psi_{nj(n+1)}, \phi_{njn}) \times (\psi_{njn}, \phi_{nj(n-1)}) \times \cdots \times (\psi_{nj2} , \phi_{nj1}) \times \bar{\theta}(\psi_{nj1}),x) \right) \\
&= P(\Phi,x), 
\end{split}
\end{equation}
with
\begin{equation}
\begin{split}
\Phi = & \hat{e} + \sum_{n=0}^{p-1} \sum_j b_{nj} \, \big( (\psi_{nj1}, \phi_{nj1}) \times \cdots \times (\psi_{njn}, \phi_{njn}) \times \bar{\theta}(\psi_{nj(n+1)}) \\
& + (-1)^n \, (\psi_{nj(n+1)}, \phi_{njn}) \times \cdots \times (\psi_{nj2}, \phi_{nj1}) \times \bar{\theta}(\psi_{nj1})\big).
\end{split}
\label{eq:Phigen}
\end{equation}
\end{proof}
\begin{theo}
The AVF discrete gradient method with $\overline{S}$ given by \eqref{eq:avfS} is of order $p$ if and only if
\begin{equation}
\Phi(\tau) = \frac{1}{\gamma(\tau)} \quad \text{ for } \lvert\tau\rvert \leq p.
\label{eq:avfdgmoc}
\end{equation}
\end{theo}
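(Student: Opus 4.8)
The plan is to prove this exactly as the order conditions for a classical one-step method are derived in the B-series framework, now transported to P-series. By the preceding theorem, one step of the method produces the numerical solution written as the P-series $\xh = P(\Phi,x)$ with $\Phi$ given by \eqref{eq:Phigen}, while, as recorded below \eqref{eq:pseries}, the exact flow of the skew-gradient system \eqref{eq:odeform} is the P-series $x(t_0+h) = P(1/\gamma,x)$ with $\gamma$ determined by \eqref{eq:gamma}. Since both objects are P-series in the single-row convention \eqref{eq:pseries}, built from the \emph{same} set $TP_{\ab}$ of bi-coloured trees and the same elementary differentials $F(\tau)$ given by \eqref{eq:eldiff}, comparing the two reduces to comparing their coefficient maps $\Phi$ and $1/\gamma$ termwise.

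First I would recall that the method has order $p$ precisely when its local error satisfies $\xh - x(t_0+h) = \mathcal{O}(h^{p+1})$. Subtracting the two P-series gives
\begin{equation*}
\xh - x(t_0+h) = \sum_{\tau\in TP_{\ab}} \frac{h^{\lvert\tau\rvert}}{\sigma(\tau)}\Big(\Phi(\tau) - \frac{1}{\gamma(\tau)}\Big) F(\tau)(x),
\end{equation*}
where the $\emptyset$-terms cancel because $\Phi(\emptyset) = 1 = 1/\gamma(\emptyset)$. Since each tree $\tau$ contributes at order $h^{\lvert\tau\rvert}$, the right-hand side is $\mathcal{O}(h^{p+1})$ if and only if the coefficient of $F(\tau)$ vanishes for every $\tau$ with $\lvert\tau\rvert \leq p$. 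Granting for the moment that the elementary differentials $\lbrace F(\tau)\rbrace_{\tau\in TP_{\ab}}$ are linearly independent, this forces $\Phi(\tau) = 1/\gamma(\tau)$ for all such $\tau$, which is precisely \eqref{eq:avfdgmoc}; conversely, if \eqref{eq:avfdgmoc} holds, every term of order $\leq p$ cancels and the local error is $\mathcal{O}(h^{p+1})$. Thus both implications follow at once from this single coefficient comparison, which is why the result is stated as immediate.

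The crux of the argument, and the step I expect to require the most care, is the linear independence of the elementary differentials $F(\tau)$ for P-series of the skew-gradient system. This is the partitioned analogue of the independence of elementary differentials that was invoked for B-series earlier in the order theorem for the generalized AVF method, and it is part of the general P-series theory \cite[Section III.2]{Hairer06}: by choosing $S(x)$ and $H(x)$ sufficiently generically one realizes the $F(\tau)(x)$ as independent vectors, so no nontrivial linear relation among them can hold identically. The one caveat worth checking is the identification $g = f$ used to collapse the two-row P-series \eqref{eq:fullpseries} to the single row \eqref{eq:pseries}; one must confirm that restricting attention to $TP_{\ab}$ does not introduce spurious coincidences among elementary differentials. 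Since the recursion \eqref{eq:eldiff} assigns each bi-coloured tree in $TP_{\ab}$ its own differential, independence is inherited from the general theory, and with it in hand the theorem follows.
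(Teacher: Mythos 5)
Your proposal is correct and follows essentially the same route as the paper, which treats this result as immediate from the preceding theorem (the method is a P-series method with coefficient map $\Phi$) by comparing coefficients against the exact-solution P-series $P(1/\gamma,x)$ and invoking the independence of the elementary differentials. The paper leaves this comparison implicit, exactly as it did for the B-series analogue earlier, so your write-up simply makes explicit what the paper regards as standard.
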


The values $\Phi(\tau)$ can be found from \eqref{eq:Phigen} using \eqref{eq:porder} recursively and then \eqref{eq:astarb}.
However, a more convenient approach is derived in the next subsection. 

\subsection{Order conditions}\label{sect:hos}
This subsection is devoted to generalizations of the results in Section \ref{sect:highorderavf} to the cases where $S(x)$ is not necessarily constant. To that end, for a tree $\tau \in TP_{\ab}$, we cut off all branches between black and white nodes and denote the mono-colored tree we are left with by $\tau^{b}$. We number the nodes in that tree as before, from $1$ to $\lvert\tau^{b}\rvert$, and reattach the cut-off parts to the tree to get $\tau$ again. 
Let $\mu$ denote a forest of black-rooted trees and $\eta$ a forest of white-rooted trees. Then, for a given node $i \in [1,\ldots,\lvert\tau^{b}\rvert]$ on level $n+1$, there exists a unique set of forests $\hat{\tau}^i = \lbrace(\mu^i_1,\eta^i_1),\ldots,(\mu^i_{n+1},\eta^i_{n+1})\rbrace$ such that
\begin{equation*}\label{eq:tauhat}
\tau=[(\mu^i_1,\eta^i_1)] \circ [(\mu^i_2,\eta^i_2)] \circ \cdots \circ [(\mu^i_{n+1},\eta^i_{n+1})].
\end{equation*}
That is,
\begin{equation*}
\tau = 
\vcenter{\hbox{
     \tikz[grow=up,
      level distance=2.5\ml,
      sibling distance=2.5\ml,
      every node/.style={inner sep=0.22\ml}]{
    \node[circle,draw,fill=black]{}
    child{ node[circle,draw,fill=black]{}  child[dashed]{ node[circle,draw,fill=black,label=right:$i$]{} child[solid]{ node{$\eta^i_{n+1}$}} child[solid]{ node{$\mu^i_{n+1}$}} }
    child{ node{$\eta^i_2$}} child{ node{$\mu^i_2$}}}
    child{ node{$\eta^i_1$}}
    child{ node{$\mu^i_1$}}
    ;}
    }}
\end{equation*}
Now we can generalize Proposition \ref{th:Phitwo} as follows.

\begin{prop}\label{th:Phitwogen}
The $\Phi$ of \eqref{eq:Phigen} can be found by
\begin{equation}\label{eq:Phitwo2gen}
\Phi(\tau) = \hat{e}(\tau) + \sum_{i=1}^{\lvert\tau^{b}\rvert}\Lambda(\hat{\tau}^i)
\end{equation}
where $\hat{e}(\emptyset) = 1$ and $\hat{e}(\tau) = 0$ for all $\tau \neq \emptyset$, and
\begin{equation}\label{eq:Lambdagen}
\begin{split}
\Lambda(\hat{\tau}^i) = & \theta([\mu_{n+1}^i]) \sum_j b_{nj} \bigg( \psi_{nj1}(\eta_1^i)\phi_{nj1}(\mu_1^i) \cdots \psi_{njn}(\eta_n^i)\phi_{njn}(\mu_n^i)\psi_{nj(n+1)}(\eta_{n+1}^i)  \\
& + (-1)^n \psi_{nj(n+1)}(\eta_{1}^i)\phi_{njn}(\mu_1^i)\psi_{njn}(\eta_2^i) \cdots \phi_{nj1}(\mu_n^i)\psi_{nj1}(\eta_{n+1}^i)  \bigg),
\end{split}
\end{equation}
with
\begin{equation*}
\theta([\tau_1,\ldots,\tau_m]) = \frac{1}{m+1}\Phi(\tau_1)\cdots\Phi(\tau_m).
\end{equation*}
\end{prop}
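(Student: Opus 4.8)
The plan is to follow the proof of Proposition~\ref{th:Phitwo} step by step, substituting the bi-colored analogues of each ingredient: the decomposition $\tau=[(\mu_1^i,\eta_1^i)]\circ\cdots\circ[(\mu_{n+1}^i,\eta_{n+1}^i)]$ along the black backbone $\tau^{b}$ in place of the single-color Butcher-product decomposition, and Lemma~\ref{th:norsettP} for $(a,b)\times c$ in place of Lemma~\ref{th:norsett}. The crucial structural fact is that in Lemma~\ref{th:norsettP} the continuation factor $c$ is applied only to a \emph{black}-rooted subtree $\tau_i$, while every white-rooted subtree $\bar{\tau}_k$ is routed into the $a$-slot and the remaining black subtrees into the $b$-slot. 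Consequently, each application of one factor $(\psi_{njm},\phi_{njm})\times$ to the iterated product in \eqref{eq:Phigen} advances the path exactly one level up $\tau^{b}$, can never continue into a white subtree, and leaves the white subtrees at that level passively evaluated by $\psi_{njm}$ and the side black subtrees by $\phi_{njm}$. This is precisely why only $\tau^{b}$ needs to be numbered.

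First I would expand the first summand of \eqref{eq:Phigen},
\[
\bigl((\psi_{nj1},\phi_{nj1})\times\cdots\times(\psi_{njn},\phi_{njn})\times\bar{\theta}(\psi_{nj(n+1)})\bigr)(\tau),
\]
by peeling off one factor at a time with Lemma~\ref{th:norsettP}. Writing $C_i$ for the set of black children of a node $i$, a single application contributes a factor $\psi_{njm}(\eta_m^{k})\,\phi_{njm}(\mu_m^{k})$ for the white and side-black forests at the current level and passes the continuation down to the subtree rooted at the selected black child $k\in C_i$. Iterating from $m=1$ to $m=n$ turns the nested sums $\sum_{i_1\in C_1}\cdots\sum_{i_n\in C_{i_{n-1}}}$ into a single sum over the nodes $i$ sitting on level $n+1$ of $\tau^{b}$, and leaves the final factor $\bar{\theta}(\psi_{nj(n+1)})$ acting on the top structure $[(\mu_{n+1}^i,\eta_{n+1}^i)]$. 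By \eqref{eq:astarb} this equals $\theta([\mu_{n+1}^i])\,\psi_{nj(n+1)}(\eta_{n+1}^i)$, which supplies both the prefactor $\theta([\mu_{n+1}^i])$ and the last $\psi$-factor of \eqref{eq:Lambdagen}, giving the first line of $\Lambda(\hat{\tau}^i)$.

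Next I would run the identical unfolding on the reversed summand $(\psi_{nj(n+1)},\phi_{njn})\times\cdots\times(\psi_{nj2},\phi_{nj1})\times\bar{\theta}(\psi_{nj1})$. Since the factors now occur in reverse order, the recursion pairs the white forest $\eta_1^i$ at level one with $\psi_{nj(n+1)}$ and $\eta_{n+1}^i$ at the top with $\psi_{nj1}$, and the side black forests $\mu_1^i,\ldots,\mu_n^i$ with $\phi_{njn},\ldots,\phi_{nj1}$, which reproduces the second line of \eqref{eq:Lambdagen}. Collecting both contributions, weighting by $b_{nj}$, summing over $j$ and inserting the sign $(-1)^n$ yields $\Lambda(\hat{\tau}^i)$ exactly as written; summing over all backbone nodes $i$ and restoring the $\hat{e}(\tau)$ term of \eqref{eq:Phigen} then gives \eqref{eq:Phitwo2gen}.

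The main obstacle I anticipate is not the peeling algebra but the two pieces of bookkeeping it hides: first, verifying that the white subtrees, which are cut away in forming $\tau^{b}$, are genuinely inert — i.e.\ that Lemma~\ref{th:norsettP} always routes them to the $a$-slot ($\psi$) and never offers them as a continuation of the path, so that the level-by-level enumeration over the $C_i$ really ranges over $\tau^{b}$ only; and second, checking that the shifted pairing of the $\psi$-indices with the $\eta$-forests in the reversed product emerges with no off-by-one error, matching $\psi_{nj(n+1)}$ with $\eta_1^i$ and $\psi_{nj1}$ with $\eta_{n+1}^i$. Getting these index alignments exactly right, across both summands, is the delicate part of the argument.
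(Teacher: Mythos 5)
Your proposal is correct and follows essentially the same route as the paper's proof: peeling off the factors of the iterated product in \eqref{eq:Phigen} via Lemma~\ref{th:norsettP}, using that the continuation is routed only into black-rooted subtrees (so the nested sums over children collapse to a sum over nodes of $\tau^{b}$ at level $n+1$), and closing with the identity $\bar{\theta}(a)([\mu,\eta]) = a(\eta)\,\theta([\mu])$ together with the reversed-product analogue. The index pairings you flag as delicate, including $\psi_{nj(n+1)}$ with $\eta_1^i$ and $\psi_{nj1}$ with $\eta_{n+1}^i$ in the reversed summand, are exactly as in \eqref{eq:Lambdagen}.
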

\begin{proof}
Defining $n_i$ and $C_i$ as in the proof of Proposition \ref{th:Phitwo}, we have
\begin{align*}
&[(\mu_{n_i+1}^i,\eta_{n_i+1}^i)] = [(\mu_{n_k}^k,\eta_{n_k}^k)] \circ [(\mu_{n_k+1}^k,\eta_{n_k+1}^k)]  \quad \text{ for all } k\in C_i,\\
&((a,b)\times c)([(\mu_{n_i+1}^i,\eta_{n_i+1}^i)]) = \sum_{k\in C_i} a(\eta_{n_k}^k)b(\mu_{n_k}^k) c([\mu_{n_k+1}^k,\eta_{n_k+1}^k]).
\end{align*}
Observe that $\bar{\theta}(a)([\mu,\eta]) = a(\eta) \theta([\mu])$. For $n=0$ we have
\begin{equation*}
\bar{\theta}(\psi_{0j1})(\tau) = \bar{\theta}(\psi_{0j1})([\mu_1^{1},\eta_1^{1}]) = \psi_{0j1}(\eta_{1}^{1})\theta([\mu_{1}^{1}]),
\end{equation*}
and for $n>0$ we get
\begin{align*}
((\psi_{nj1}, &\phi_{nj1}) \times \cdots \times (\psi_{njn}, \phi_{njn}) \times \bar{\theta}(\psi_{nj(n+1)}))(\tau)\\
&= ((\psi_{nj1}, \phi_{nj1}) \times \cdots \times (\psi_{njn}, \phi_{njn}) \times \bar{\theta}(\psi_{nj(n+1)}))([\mu_1^{1},\eta_1^{1}])\\
&=\sum_{i_1\in C_1} \psi_{nj1}(\eta_1^{i_1})\phi_{nj1}(\mu_1^{i_1})((\psi_{nj2},\phi_{nj2}) \times \cdots \times (\psi_{njn},\phi_{njn}) \times \bar{\theta}(\psi_{nj(n+1)}))([\mu_2^{i_1},\eta_2^{i_1}])\\
&\quad \vdots\\
&=\sum_{i_1\in C_1} \cdots \sum_{i_n\in C_{i_{n-1}}} \psi_{nj1}(\eta_1^{i_n})\phi_{nj1}(\mu_1^{i_n}) \cdots \psi_{njn}(\eta_n^{i_n})\phi_{njn}(\mu_n^{i_n})\bar{\theta}(\psi_{nj(n+1)})([\mu_{n+1}^{i_n},\eta_{n+1}^{i_n}])\\
&=\sum_{i \text { on level } n+1} \psi_{nj1}(\eta_1^{i})\phi_{nj1}(\mu_1^{i}) \cdots \psi_{njn}(\eta_n^{i})\phi_{njn}(\mu_n^{i})\psi_{nj(n+1)}(\eta_{n+1}^{i})\theta([\mu_{n+1}^{i}]).
\end{align*}
Inserting this and the corresponding result for $((\psi_{nj(n+1)}, \phi_{njn}) \times \cdots \times (\psi_{nj2}, \phi_{nj1}) \times \bar{\theta}(\psi_{nj1}))(\tau)$ in \eqref{eq:Phigen}, we get \eqref{eq:Lambdagen}.
\end{proof}

Note that if $\tau$ only has black nodes, we have $\Lambda(\hat{\tau}^1) = \theta(\tau) \sum_j b_{0j} (\psi_{0j1}(\emptyset)+\psi_{0j1}(\emptyset)) = \theta(\tau)$, and also $\Lambda(\hat{\tau}^i) = 0$ for all nodes $i$ on level $2$. Thus \eqref{eq:Phitwo2gen} simplifies to \eqref{eq:Phitwo2}.

Like for the constant $S$ case, the order conditions can be given for energy-preserving linear combinations of elementary differentials instead for each elementary differential. In the following generalization of Lemma \ref{th:quisplem}, we state that the energy-preserving linear combinations of bi-colored rooted trees are given by
\begin{equation*}
\omega = 
\vcenter{\hbox{
     \tikz[grow=up,
      level distance=2.5\ml,
      sibling distance=2.5\ml,
      every node/.style={inner sep=0.22\ml}]{
    \node[circle,draw,fill=black]{}
    child{ node[circle,draw,fill=black]{}  child[dashed]{ node[circle,draw,fill=black]{} child[solid]{ node[circle,draw,fill=black]{} child[solid]{ node{$\eta_{n+1}$}}} child[solid]{ node{$\eta_n$}} child[solid]{ node{$\mu_n$}}}
    child{ node{$\eta_2$}} child{ node{$\mu_2$}}}
    child{ node{$\eta_1$}}
    child{ node{$\mu_1$}}
    ;}
    }}
    \quad +(-1)^n \quad
\vcenter{\hbox{
     \tikz[grow=up,
      level distance=2.5\ml,
      sibling distance=2.5\ml,
      every node/.style={inner sep=0.22\ml}]{
    \node[circle,draw,fill=black]{}
    child{ node[circle,draw,fill=black]{}  child[dashed]{ node[circle,draw,fill=black]{} child[solid]{ node[circle,draw,fill=black]{} child[solid]{ node{$\eta_{1}$}}} child[solid]{ node{$\eta_2$}} child[solid]{ node{$\mu_1$}}}
    child{ node{$\eta_{n}$}} child{ node{$\mu_{n-1}$}}}
    child{ node{$\eta_{n+1}$}}
    child{ node{$\mu_{n}$}}
    ;}
    }}
\end{equation*}

\begin{theo}\label{th:preservingtrees}
Let $\mu_1, \mu_2, \ldots, \mu_n$ be arbitrary forests of black-rooted trees and $\eta_1, \eta_2, \ldots, \eta_{n+1}$ arbitrary forests of white-rooted trees. Given $f(x)=S(x)\nabla H(x)$, where $S(x)$ is a skew-symmetric matrix, and elementary differentials defined by \eqref{eq:eldiff}, the linear combinations of trees given by
\begin{equation}
\omega = [(\mu_1,\eta_1)] \circ \cdots \circ [(\mu_{n},\eta_{n})] \circ [\eta_{n+1}] + (-1)^n \, [(\mu_{n},\eta_{n+1})] \circ \cdots \circ [(\mu_{1},\eta_{2})] \circ [\eta_{1}]
\label{eq:enpreslingen}
\end{equation}
are energy-preserving in the sense that $F(\omega)(x) \cdot \nabla H(x) = 0$.

\end{theo}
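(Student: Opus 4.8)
The plan is to compute $F(\omega)(x)\cdot\nabla H(x)$ directly, by unwinding the Butcher-product spine of each of the two trees in \eqref{eq:enpreslingen} into a product of matrices and then exploiting a transposition symmetry, exactly as in the constant-$S$ argument behind Lemma~\ref{th:quisplem}. Write $g:=\nabla H$ and let $\tau,\tau'$ denote the first and second tree in \eqref{eq:enpreslingen}. Two structural facts are needed first: (i) every derivative $S^{(l)}(x)$ is skew-symmetric in its two matrix indices, because differentiating the identity $S(x)=-S(x)^T$ leaves the antisymmetry of those indices untouched, the differentiation slots being merely contracted against white-subtree differentials; and (ii) $D^m\nabla H=\nabla^{m+1}H$ is a fully symmetric tensor, so contracting any of its slots with fixed vectors yields a symmetric matrix in the remaining slots.

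Next I would analyze a single spine node. An interior node of $\tau$ carrying the pair $(\mu_k,\eta_k)$ and continuing the spine downward corresponds, via \eqref{eq:eldiff}, to $S^{(|\eta_k|)}D^{|\mu_k|+1}\nabla H$ evaluated on the subtree differentials. Contracting the $|\eta_k|$ white slots with $F(\eta_k)$ gives, by (i), a skew-symmetric matrix $\tilde S_k$; contracting $|\mu_k|$ of the black slots with $F(\mu_k)$ gives, by (ii), a symmetric matrix $B_k$; and the two remaining slots connect upward to the parent and downward to the next spine node. Thus $F(v_k)=\tilde S_k B_k\,F(v_{k+1})$ at each interior node, while the terminal node $[\eta_{n+1}]$ gives $F(v_{n+1})=\tilde S_{n+1}\,g$, using $D^0\nabla H=g$. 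Unwinding the chain yields
\begin{equation*}
F(\tau)(x)\cdot g = g^T\,\tilde S_1 B_1\,\tilde S_2 B_2\cdots\tilde S_n B_n\,\tilde S_{n+1}\,g,
\end{equation*}
an alternating product of skew-symmetric and symmetric matrices sandwiched between two copies of $g$.

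Applying the identical computation to $\tau'$, whose forests are shifted and reversed, one checks that its interior factors are $B'_k=B_{n+1-k}$ and $\tilde S'_k=\tilde S_{n+2-k}$, with terminal factor $\tilde S_1$, so that
\begin{equation*}
F(\tau')(x)\cdot g = g^T\,\tilde S_{n+1} B_n\,\tilde S_n B_{n-1}\cdots\tilde S_2 B_1\,\tilde S_1\,g,
\end{equation*}
which is precisely the reversed product of the same matrices. Since $F(\tau)(x)\cdot g$ is a scalar it equals its transpose; transposing and applying $\tilde S_k^T=-\tilde S_k$ and $B_k^T=B_k$ reverses the order of the factors and produces one sign per skew factor, of which there are $n+1$. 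Hence $F(\tau)(x)\cdot g=(-1)^{n+1}F(\tau')(x)\cdot g$, and therefore $F(\omega)(x)\cdot g=F(\tau)(x)\cdot g+(-1)^nF(\tau')(x)\cdot g=0$.

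The main obstacle I anticipate is the bookkeeping that matches the reversed tree to the transposed matrix product: one must verify that the particular way the white forests are relabelled in \eqref{eq:enpreslingen} (the terminal $\eta_{n+1}$ migrating to the top of $\tau'$ and $\eta_1$ to its bottom) is exactly what aligns $\tilde S'_k$ with $\tilde S_{n+2-k}$ and $B'_k$ with $B_{n+1-k}$, so that the reversed product coincides with the transpose with no stray factors. The full symmetry of $\nabla^{m+1}H$ is what makes each $B_k$ well defined independently of the ordering of the black children, and this together with the skew-symmetry of the $S^{(l)}$ is what drives the cancellation.
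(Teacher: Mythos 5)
Your proposal is correct and follows essentially the same route as the paper: decompose each spine node into a skew-symmetric factor (from $S^{(l)}$ contracted with the white-forest differentials) times a symmetric factor (from the fully symmetric $D^{m+1}\nabla H$ contracted with the black-forest differentials), observe that the reversed tree yields the reversed matrix product, and count one sign per skew factor ($n+1$ of them) under transposition. The only cosmetic difference is that the paper concludes by noting the combined matrix is skew-symmetric while you transpose the scalar directly; these are equivalent.
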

\begin{proof}
For any forest of black-rooted trees $\mu_j$, we have $F([\mu_j]\circ[\emptyset]) = S B_j S\nabla H$ for some symmetric matrix $B_j$, suppressing the argument $x$. Similarly, for a forest of white-rooted trees $\eta_j$, we have $F([\eta_j]) = W_j\nabla H$ for some skew-symmetric matrix $W_j$. Note that the empty forest is considered both a black-rooted and a white-rooted forest, and accordingly we have $F([\emptyset]\circ[\emptyset]) = F(\aabb) =  S(\nabla^2H)S\nabla H $ and $F([\emptyset]) = F(\ab) = S\nabla H$.
For these matrices $B_j$ and $W_j$ corresponding to the forests $\mu_j$ and $\eta_j$, we get
\begin{equation*}
F\big([(\mu_1,\eta_1)] \circ \cdots \circ [(\mu_{n},\eta_{n})] \circ [\eta_{n+1}]\big) = W_1 B_1 W_2 B_2 \cdots B_n W_{n+1} \nabla H.
\end{equation*}
We have
\begin{equation*}
(W_1 B_1 W_2 B_2 \cdots B_n W_{n+1})^T =
\begin{cases}
-W_{n+1} B_n W_n B_{n-1} \cdots B_1 W_1 & \text{if } n \text{ even,}\\
W_{n+1} B_n W_n B_{n-1} \cdots B_1 W_1 & \text{if } n \text{ odd.}
\end{cases}
\end{equation*}
Thus $F(\omega)(x)$ is a skew-symmetric matrix times $\nabla H(x)$, and the statement in the above theorem follows directly.
\end{proof}

\begin{example}\label{ex:bbbbw}
We show that the combination $\aaabaAbbBb+ \aABabaabbb$ is energy-preserving.\\
\begin{align*}
\aaabaAbbBb: & \quad (f^\bullet f^{\bullet\bullet}(f,f^\circ f))^i = S^{i}_jg^j_{k}S^{k}_lg^l_{mo}S^{m}_n g^n S^{o}_{pq} g^p S^{q}_r g^r = S^{i}_j g^j_{k}S^{k}_l g^l_{mo}S^{m}_n g^n S^{o}_{pq} S^{q}_r g^r g^p.\\
\aABabaabbb: & \quad (f^{\circ\bullet\bullet}(f,f,f^\bullet f))^i = S^{i}_{jk} g^j_{mo}S^{k}_lg^lS^{m}_ng^nS^{o}_p g^p_{q}S^{q}_r g^r = S^{i}_{jk}S^{k}_l g^l g^j_{mo}S^{m}_ng^n S^{o}_r g^r_{q} S^{q}_p g^p.
\end{align*}
For this linear combination in the form \eqref{eq:enpreslingen}, we have $\eta_1 = \eta_2 = \emptyset, \eta_3 = \AB$, $\mu_1 =\emptyset, \mu_2 = \ab$, with the corresponding matrices $W_1=W_2=S, (W_3)^{i}_j = S^{i}_{jk} S^{k}_l g^l$ and $B_1 = \nabla^2 H, (B_2)^{j}_m = g^j_{km} S^{k}_lg^l$. Thus we get
$$\aaabaAbbBb + \aABabaabbb = f^\bullet f^{\bullet\bullet}(f,f^\circ f) + f^{\circ\bullet\bullet}(f,f,f^\bullet f) = Z\nabla H,$$
where $Z \coloneqq S(\nabla^2 H)S B_2 W_3 + W_3 B_2 S (\nabla^2 H)S$ is a skew-symmetric matrix.
\end{example}

For bi-colored trees, we define a node on the tree $\tau$ to be a leaf if it is a leaf on the corresponding cut tree $\tau^{b}$ by the definition of leaves given in the previous section. 
We let $I_l$ be the set of leaves and $I_n$ the set of non-leaf nodes which are also in $\tau^{b}$, so that $I_l \cup I_n = [ 1,\ldots,\lvert\tau^{b}\rvert]$. 
In contrast to the case with mono-colored trees, a leaf $i$ on level one or two of a bi-colored tree may give rise to a non-zero energy-preserving linear combination; it does so if and only if $\eta^i_{k} \neq \emptyset$ for any $k = 1,2$. Accordingly, $\Lambda(\hat{\tau}^i)$ is calculated in \eqref{eq:Phitwo2gen} also when $n=0,1$. Furthermore, two leaves $i$ and $j$ on the same level will belong to two different energy-preserving combinations if $\eta^i_{n+1} \neq \eta^j_{n+1}$. Therefore we now simply state that a tree with $r$ leaves, also including the lower two levels, belong to at most $r$ non-zero linear combinations. We thus get $r$ terms on the left hand side of
\begin{equation}
\sum_{i\in I_l}\Lambda(\hat{\tau}^i) = \frac{1}{\gamma(\tau)} - \hat{e}(\tau) - \sum_{i\in I_n}\frac{\Lambda(\lbrace(\mu^i_1,\eta_1^i),\ldots,(\mu^i_{n},\eta^i_{n}),(\emptyset,\eta^i_{n+1})\rbrace)}{(\lvert\mu_{n+1}^i\rvert +1) \gamma(\mu_{n+1}^i)},
\label{eq:ordercondn2gen}
\end{equation}
which is equivalent to \eqref{eq:avfdgmoc} if we assume the conditions for lower order to be satisfied.

\begin{example}
Consider the tree $\tau = \aAbaabBb$, which is part of the energy-preserving linear combination $\aAbaabBb - \aABababb$. Assume that the order conditions up to and including order three are all satisfied. The cut tree $\tau^{b} = \aababb$ has three nodes of which two are leaves. Node number $2$ is a leaf on level $2$ with $\eta^2_1 = \eta^2_2 = \emptyset$, and thus gives $\Lambda(\hat{\tau}^2)=0$. We find for the other two,
\begin{align*}
\Lambda(\hat{\tau}^1) &= \Lambda(\lbrace ((\ab,\aABb),\emptyset)\rbrace) = \frac{1}{(\lvert\mu_{1}^1\rvert+1) \gamma(\mu_{1}^1) }\Lambda\big(\lbrace (\emptyset,\emptyset)\rbrace\big) = \frac{1}{(2+1) \gamma(\ab)\gamma(\aABb)}\frac{1}{\gamma(\ab)} = \frac{1}{6},\\
\Lambda(\hat{\tau}^3) &= \Lambda(\lbrace (\ab,\emptyset),(\emptyset,\AB)\rbrace) = \sum_j b_{1j}(\phi_{1j1}(\ab)\psi_{1j2}(\AB)-\psi_{1j1}(\AB)\phi_{1j1}(\ab))= \sum_j b_{1j}\phi_{1j1}(\ab)(\psi_{1j2}-\psi_{1j1})(\ab).
\end{align*}
For the right hand side of \eqref{eq:ordercondn2gen} we get
\begin{equation*}
\frac{1}{\gamma(\tau)}-\Lambda(\hat{\tau}^1) = \frac{1}{8}-\frac{1}{6} = -\frac{1}{24},
\end{equation*}
and thus the order condition
\begin{equation*}
\sum_j b_{1j}\phi_{1j1}(\ab)(\psi_{1j2}-\psi_{1j1})(\ab) = -\frac{1}{24}
\end{equation*}
for the energy-preserving linear combination $\aAbaabBb - \aABababb$.
\end{example}

\begin{table}[!ht]
\begin{center}
\begin{tabular}{|c|c|c|}
\hline
$\lvert\tau\rvert$ & $\omega$ & Order condition\\ \hline
$1$	& $\ab$ & $2 \sum_j b_{0j} = 1$ \\ \hline
$2$	& $\aABb$ & $2 \sum_j b_{0j} \psi_{0j1}(\ab) = \frac{1}{2}$ \\ \hline
$3$	& $\aABABb$ & $2 \sum_j b_{0j} \psi_{0j1}(\ab)^2 = \frac{1}{3}$ \\
	& $\aaabbb$ & $\sum_j b_{2j} = -\frac{1}{24}$ \\
	& $\aAabBb$ & $2 \sum_j b_{0j} \psi_{0j1}(\aabb) = \frac{1}{6}$ \\
	& $\aAABBb$ & $2 \sum_j b_{0j} \psi_{0j1}(\aABb) = \frac{1}{6}$ \\
	& $\aaABbb - \aABabb$ & $\sum_j b_{1j} (\psi_{1j2}-\psi_{1j1})(\ab) = -\frac{1}{12}$ \\ \hline
$4$	& $\aABABABb$ & $2 \sum_j b_{0j} \psi_{0j1}(\ab)^3 = \frac{1}{4}$ \\
	& $\aaabABbb$ & $2 \sum_j b_{2j} \psi_{2j2}(\ab) = -\frac{1}{24}$ \\
	& $\aAababBb$ & $2 \sum_j b_{0j} \psi_{0j1}(\aababb)=\frac{1}{12}$ \\
	& $\aAABabBb$ & $2 \sum_j b_{0j} \psi_{0j1}(\aABabb)=\frac{1}{12}$ \\
	& $\aAABABBb$ & $2 \sum_j b_{0j} \psi_{0j1}(\aABABb)=\frac{1}{12}$ \\
	& $\aABAabBb$ & $2 \sum_j b_{0j} \psi_{0j1}(\ab)\psi_{0j1}(\aabb)=\frac{1}{8}$ \\
	& $\aABAABBb$ & $2 \sum_j b_{0j} \psi_{0j1}(\ab)\psi_{0j1}(\aABb)=\frac{1}{8}$  \\
	& $\aaababbb + \aabaabbb$ & $\sum_j b_{2j} (\phi_{2j1}+\phi_{2j2})(\ab) = -\frac{1}{24}$ \\
	& $\aaABABbb - \aABABabb$ & $\sum_j b_{1j} (\psi_{1j2}(\ab)^2-\psi_{1j1}(\ab)^2) = -\frac{1}{12}$\\
	& $\aAbaabBb - \aABababb$ & $\sum_j b_{1j}\phi_{1j1}(\ab)(\psi_{1j1}-\psi_{1j0})(\ab) = -\frac{1}{24}$ \\
	& $\aAaabbBb$ & $2 \sum_j b_{0j}\psi_{0j1}(\aaabbb) = \frac{1}{24}$ \\
	& $\aAAabBBb$ & $2 \sum_j b_{0j}\psi_{0j1}(\aAabBb) = \frac{1}{24}$ \\
	& $\aAaABbBb$ & $2 \sum_j b_{0j}\psi_{0j1}(\aaABbb) = \frac{1}{24}$ \\
	& $\aAAABBBb$ & $2 \sum_j b_{0j}\psi_{0j1}(\aAABBb) = \frac{1}{24}$ \\
	& $\aaaAbbBb + \aaBAabbb$ & $\sum_j b_{2j}(\psi_{2j1}+\psi_{2j3})(\ab) = -\frac{1}{24}$ \\
	& $\aaAabBbb - \aAabBabb$ & $\sum_j b_{1j}(\psi_{1j2}-\psi_{1j1})(\aabb) = -\frac{1}{24}$ \\
	& $\aaAAbBBb - \aAABBabb$ & $\sum_j b_{1j}(\psi_{1j2}-\psi_{1j1})(\aABb) = -\frac{1}{24}$ \\ \hline
\end{tabular}
\end{center}
\caption{Linear combinations $\omega$ of bi-colored black-rooted trees corresponding to energy-preserving elementary differentials of $f(x) = S(x)\nabla H(x)$, where $S(x)$ is a skew-symmetric matrix, as well as their associated order conditions for the discrete gradient method \eqref{eq:dgm} with the AVF discrete gradient \eqref{eq:avfdg} and $\overline{S}(x,\xh,h)$ given by \eqref{eq:avfS}.}
\label{tab:avfdgmoc}
\end{table}

Even though the number of black-rooted bi-colored trees grows very quickly, e.g.\ to $26$ for $\lvert\tau\rvert=4$ and $107$ for $\lvert\tau\rvert=5$, finding and satisfying the order conditions is not as daunting a task as it might first appear. First of all, it suffices to find order conditions for the non-zero linear combinations given by \eqref{eq:enpreslingen}. Moreover, a couple key observations simplifies the process further:
\begin{itemize}
\item The large number of trees $\tau$ for which $\tau^{b} = \ab$, i.e.\ trees with no black nodes on level $2$, are all energy-preserving. They can be written $\tau = [\eta_1^1]$, and their order condition is given by
\begin{equation*}
2 \sum_j b_{0j} \psi_{0j1}(\eta_1^1) = \frac{1}{\gamma(\tau)}.
\end{equation*}
\item For trees that are identical except for the colors of the descendants of white nodes, it suffices to calculate one order condition. E.g.\ for $\aAgbgbBb$ we have the order condition $2b_{0j} \psi_{0j1}(\AbbbbB) = \frac{1}{12}$, where each of the gray nodes may be black or white. To satisfy these conditions, it is natural to require that $\bar{z}_{0j1}$ in \eqref{eq:avfS} is a B-series \textit{up to} order $p-1$.
\end{itemize}

From the order conditions displayed in Table \ref{tab:avfdgmoc} we find that one second order scheme is given by \eqref{eq:dgm} using the AVF discrete gradient \eqref{eq:avfdg} and an explicit skew-symmetric approximation of $S$ given by $\overline{S}(x,\cdot,h) = S(x + \frac{1}{2} h f(x))$.
A third order scheme is obtained if we instead use the skew-symmetric approximation of $S$ explicitly given by 
\begin{equation}
\begin{split}
\overline{S}(x,\cdot,h) =& \, \frac{1}{4} S(x) + \frac{3}{4}S(z_2) + \frac{1}{4} h \, \big( S(z_1)\nabla^2 H(x)S(x) - S(x)\nabla^2 H(x)S(z_1)\big) \\
& - \frac{1}{12} h^2 \, S(x)\nabla^2 H(x)S(x)\nabla^2 H(x)S(x),
\end{split}
\label{eq:3thSavf}
\end{equation}
where $z_1 = x + \frac{1}{3} h f(x)$, $z_2 = x + \frac{2}{3} h f(z_1)$.

A symmetric fourth order scheme is given by 
\eqref{eq:dgm} using the AVF discrete gradient \eqref{eq:avfdg} and the skew-symmetric approximation of $S$
\begin{equation}\label{eq:4thSimp}
\begin{split}
\overline{S}(x,\xh,h) =& \, \frac{1}{2} S\Big(\xb - \frac{1}{\sqrt{12}} h f\big(\xb + \frac{1}{\sqrt{12}} h f(\xb)\big) \Big) + \frac{1}{2} S\Big(\xb + \frac{1}{\sqrt{12}} h f\big(\xb - \frac{1}{\sqrt{12}} h f(\xb)\big) \Big)\\
& + \frac{1}{2} h \, S\big(\xb+\frac{1}{12}h f(\xb)\big)\nabla^2 H(\xb)S\big(\xb-\frac{1}{12}h f(\xb)\big) \\
& - \frac{1}{2} h \, S\big(\xb-\frac{1}{12}h f(\xb)\big)\nabla^2 H(\xb)S\big(\xb+\frac{1}{12}h f(\xb)\big) \\
& - \frac{1}{12} h^2 \, S(\xb)\nabla^2 H(\xb)S(\xb)\nabla^2 H(\xb)S(\xb),
\end{split}
\end{equation}
where $\xb = \frac{x+\xh}{2}$.
Another fourth order scheme is obtained if we instead use the explicit skew-symmetric approximation of $S$ found by
\begin{equation}
\begin{split}
\overline{S}(x,\cdot,h) =& \, \frac{1}{2} (S(z_5+z_6)+S(z_5-z_6)) + \frac{1}{12} h \, \big( S(z_2)\nabla^2 H(z_1)S(x) - S(x)\nabla^2 H(z_1)S(z_2)\big) \\
& - \frac{1}{12} h^2 \, S(z_1)\nabla^2 H(z_1)S(z_1)\nabla^2 H(z_1)S(z_1),
\end{split}
\label{eq:4thSavf}
\end{equation}
where
\begin{align*}
z_1 &= x + \frac{1}{2} h f(x), & z_3 = x + h f(z_2), & \hspace{30pt} z_5 = \frac{1}{3}(x+z_1+z_2) + \frac{1}{12} (-z_3+z_4),\\
z_2 &= x + h f(z_1), & z_4 = x + h f(z_3), & \hspace{30pt} z_6 = \frac{\sqrt{3}}{36}(7 x-2 z_1-4z_2+z_3-2z_4).
\end{align*}

\section{Order conditions for general discrete gradient methods}\label{sect:gengen}
We will now generalize the results of the two previous sections to discrete gradient methods with a general discrete gradient, as defined by \eqref{eq:dgcond1}--\eqref{eq:dgcond2}. To that end, we introduce two new series in the vein of B- and P-series, as well as related tree structures.

\subsection{The constant $S$ case}\label{sect:genconst}

Consider mono-colored rooted trees whose nodes can have two different shapes: the circle shape of the nodes in trees of B-series, but also a triangle shape. Let $TG$ be the set of such trees whose leaves are always circles. 
That is, from the first tree $\ab$, every tree $\tau \in TG$ can be built recursively through 
\begin{equation*}
[\tau_1,\ldots,\tau_m]_{\ab}, \quad [\tau_1,\ldots,\tau_m]_{\tb}, \quad \tau_1,\ldots,\tau_m \in TG,
\end{equation*}
which denotes the grafting of the trees $\tau_1,\ldots,\tau_m$ to a root ${\ab}$ or ${\tb}$, respectively.
The elementary differentials $F(\tau)$ corresponding to a tree $\tau \in TG$ are likewise defined recursively by $F(\ab)(x) = f(x) = S \nabla H(x)$ and
\begin{equation*}
F(\tau)(x) =
\begin{cases}
S D^m\nabla H(x)(F(\tau_1)(x),\ldots,F(\tau_m)(x)) & \text{for } \tau = [\tau_1,\ldots,\tau_m]_{\ab}, \\
S D_2^{m-1}Q(x,x) (F(\tau_1)(x),\ldots,F(\tau_m)(x)) & \text{for } \tau = [\tau_1,\ldots,\tau_m]_{\tb}.
\end{cases}
\end{equation*}
We can then define a generalization of B-series which includes these elementary differentials.
\begin{definition}
A G-series is a formal series of the form
\begin{equation}\label{eq:gseries}
G(\phi,x) = \phi(\emptyset)x + \sum_{\tau\in TG}\frac{h^{\lvert\tau\rvert}}{\sigma(\tau)}\phi(\tau)F(\tau)(x),
\end{equation}
where $\phi : TG \cup \lbrace\emptyset\rbrace \rightarrow \mathbb{R}$ is an arbitrary mapping, and the symmetry coefficient $\sigma$ is given by \eqref{eq:symcoeff}.
\end{definition}

The G-series of the exact solution is given by $x(t_0+h) = G(\xi,x(t_0))$, with
\begin{equation}
\xi(\tau) = 
\begin{cases}
\frac{1}{\gamma(\tau)} & \text{if } \tau \in T,\\
0 & \text{otherwise}.
\end{cases}
\label{eq:gexact}
\end{equation}
For use in the remainder of this paper, we generalize the Butcher product by the definition
\begin{equation*}
u \circ v = [u_1,\ldots,u_m,v]_{\st}, \quad \text{for } u = [u_1,\ldots,u_m]_{\st}, \quad \st \in \lbrace \ab,\tb\rbrace.
\end{equation*}
Furthermore, we let $\lvert\tau\rvert$ denote the total number of nodes in $\tau$, and $\lvert\tau\rvert_{\st}$ the number of nodes of type $\st$. Let $SG$ be the set of tall trees in $TG$; that is, the set of trees with only one node on each level. For a tree $\tau \in TG$, number every tree from $1$ to $\lvert\tau\rvert$, as before. For any node $i$ on level $n+1$, we define the stem $s^i \in SG$ to be the tall tree consisting of the nodes connecting the root to node $i$, including the root and node $i$. Denote the $j^\text{th}$ node of $s^i$ by $s^i_j$, so that $s^i_1$ is the root and $s^i_{n+1} = i$. Then we have a unique set of forests $\hat{\tau}^i = \lbrace\mu^i_1,\ldots,\mu^i_{n+1}\rbrace$ such that
$$
\tau=[\mu^i_1]_{s^i_1} \circ [\mu^i_2]_{s^i_2} \circ \cdots \circ [\mu^i_{n+1}]_{s^i_{n+1}}.
$$
That is,
\begin{equation*}
\tau = 
\vcenter{\hbox{
     \tikz[grow=up,
      level distance=2.5\ml,
      sibling distance=2.5\ml,
      every node/.style={inner sep=0.22\ml}]{
    \node{$s_{1}^i$}
    child{ node{$s_{2}^i$}  child[dashed]{ node{$s^i_{n+1}$} child[solid]{ node{$\mu_{n+1}^i$}}}
    child{ node{$\mu_2^i$}}}
    child{ node{$\mu_1^i$}}
    ;}
    }}
\end{equation*}

The following lemma is a generalization of Lemma \ref{th:norsett} to G-series. Its proof is very similar to the proof of \cite[Theorem 2.2]{Norsett79}, and hence omitted. 
\begin{lemm}
Let $G(a,x)$ and $G(b,x)$ be two G-series with $a(\emptyset) = 1$ and $b(\emptyset) = 0$. Then the G-series $h S \nabla^2 H(G(a,x))G(b,x) = G(a\times b,x)$ is given by $(a \times b)(\emptyset) = (a \times b)(\ab) = 0$ and otherwise
\begin{equation*}
(a \times b)(\tau) = 
\begin{cases}
\sum_{i=1}^m \prod_{j=1, j \neq i}^m a(\tau_j)b(\tau_i) & \text{for } \tau = [\tau_1,\ldots,\tau_m]_{\ab}, \\
0 & \text{for } \tau = [\tau_1,\ldots,\tau_m]_{\tb}.
\end{cases}
\end{equation*}
Moreover, $h S Q(x,G(a,x))G(b,x) = G(a\otimes b,x)$, with $(a \otimes b)(\emptyset) = (a \otimes b)(\ab) = 0$ and otherwise
\begin{equation*}
(a \otimes b)(\tau) = 
\begin{cases}
0 & \text{for } \tau = [\tau_1,\ldots,\tau_m]_{\ab}, \\
\sum_{i=1}^m \prod_{j=1, j \neq i}^m a(\tau_j)b(\tau_i) & \text{for } \tau = [\tau_1,\ldots,\tau_m]_{\tb}.
\end{cases}
\end{equation*}
\end{lemm}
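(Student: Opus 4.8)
The plan is to prove both identities by the substitution-and-Taylor-expansion method used for Lemma \ref{th:norsett} (i.e.\ \cite[Theorem 2.2]{Norsett79}), the only genuinely new feature being that $TG$ carries two node types. I would treat the $\times$-identity in full and then indicate the (formally parallel) changes for the $\otimes$-identity. Throughout I suppress the argument $x$ in $F(\tau)$, $D^{k}\nabla H$ and $D_2^{k}Q$, and I use that $a(\emptyset)=1$ gives $G(a,x)-x=\sum_{u\in TG}\frac{h^{\lvert u\rvert}}{\sigma(u)}a(u)F(u)$ while $b(\emptyset)=0$ gives $G(b,x)=\sum_{w\in TG}\frac{h^{\lvert w\rvert}}{\sigma(w)}b(w)F(w)$.

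For the first identity I would substitute $y=G(a,x)$ into $S\nabla^2 H(y)$, Taylor expand about $x$, and apply the result to $G(b,x)$:
\begin{equation*}
h\,S\nabla^2 H(G(a,x))\,G(b,x)=h\sum_{k\geq 0}\frac{1}{k!}\,S\,D^{k+1}\nabla H\big((G(a,x)-x)^{\otimes k},\,G(b,x)\big),
\end{equation*}
using $D^{k}\nabla^2 H=D^{k+1}\nabla H$ and the fact that $D^{k+1}\nabla H$ is a fully symmetric tensor. Writing $m=k+1$ and recalling $S\,D^{m}\nabla H(F(\tau_1),\ldots,F(\tau_m))=F([\tau_1,\ldots,\tau_m]_{\ab})$, every term is the elementary differential of a \emph{circle}-rooted tree, so $(a\times b)(\tau)=0$ for $\tau=[\tau_1,\ldots,\tau_m]_{\tb}$ is immediate. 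Inserting the two series and collecting the coefficient of a fixed $\tau=[\tau_1,\ldots,\tau_m]_{\ab}$, the $m-1$ copies of $G(a,x)-x$ fill the interchangeable derivative slots and the single factor $G(b,x)$ fills the remaining slot; the prefactor $\tfrac{1}{(m-1)!}$ together with $\sigma(\tau)=\sigma(\tau_1)\cdots\sigma(\tau_m)\,\mu_1!\mu_2!\cdots$ collapses the sum over assignments to $\sum_{i=1}^{m}\prod_{j\neq i}a(\tau_j)b(\tau_i)$. This bookkeeping is verbatim that of \cite[Theorem 2.2]{Norsett79}, which I would cite rather than reproduce.

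For the second identity I would instead expand $Q(x,G(a,x))$ in its \emph{second} argument about $x$,
\begin{equation*}
h\,S\,Q(x,G(a,x))\,G(b,x)=h\sum_{k\geq 0}\frac{1}{k!}\,S\,D_2^{k}Q(x,x)\big((G(a,x)-x)^{\otimes k}\big)\,G(b,x),
\end{equation*}
and use $S\,D_2^{m-1}Q(x,x)(F(\tau_1),\ldots,F(\tau_m))=F([\tau_1,\ldots,\tau_m]_{\tb})$ with $m=k+1$. Now every term is \emph{triangle}-rooted, so circle-rooted trees receive coefficient $0$, and the coefficient extraction formally mirrors the first part, with the $a$-series supplying the $m-1$ differentiation directions and the $b$-series the vector on which the resulting matrix acts.

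The step I expect to be the real obstacle is exactly this last count. Unlike $D^{m}\nabla H$, the tensor $D_2^{m-1}Q$ is symmetric only in its $m-1$ differentiation arguments, whereas the ``column'' argument — the one the matrix $S\,D_2^{m-1}Q(\cdots)$ is applied to, always furnished by $G(b,x)$ — is distinguished; indeed a direct check shows $S\,D_2Q(v_1,v_2)\neq S\,D_2Q(v_2,v_1)$ in general, so the interchange of the $b$-filled slot with the $a$-filled slots that drives the $\sum_i$ in the circle case is not available verbatim. The crux is therefore to verify that the tree–forest conventions and the symmetry coefficient $\sigma(\tau)$ attached to triangle-rooted trees track this distinguished column child correctly, so that the asymmetry does not spoil the collapse to $\sum_{i=1}^{m}\prod_{j\neq i}a(\tau_j)b(\tau_i)$. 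This is the single place where the argument departs from the fully symmetric setting of Lemma \ref{th:norsett}, and it is where I would concentrate the verification; once it is settled, reading off $(a\times b)$ and $(a\otimes b)$ from the G-series \eqref{eq:gseries} completes both claims.
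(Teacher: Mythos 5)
The paper does not actually supply a proof of this lemma: it states only that the argument is ``very similar to the proof of \cite[Theorem 2.2]{Norsett79}, and hence omitted.'' Your substitution-and-Taylor-expansion route is therefore precisely the intended one, and your treatment of the $\times$-identity is complete: $D^{k}\nabla^{2}H=D^{k+1}\nabla H$ is a fully symmetric tensor, every term produced is circle-rooted, and the standard multiset count gives the stated coefficient.

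For the $\otimes$-identity, however, you have correctly located the one genuinely new difficulty and then stopped short of resolving it, which leaves the proposal incomplete at exactly the load-bearing step. The difficulty is real and does not settle itself: already for $m=2$ and $\tau_1\neq\tau_2$, applying $S\,D_2Q(x,x)$ with $F(\tau_1)$ in the column slot and $F(\tau_2)$ in the differentiation slot gives a genuinely different function from the reverse placement, since in components $(D_2Q(x,x))_{jkl}=\tfrac12\gb(x)_{k,jl}-\tfrac12\gb(x)_{j,kl}$ is not symmetric under $k\leftrightarrow l$ for a general discrete gradient. Consequently the Nørsett multiset count produces $\sum_{i}b(\tau_i)\prod_{j\neq i}a(\tau_j)$ paired term-by-term with $m$ \emph{a priori distinct} differentials, and collapsing this onto a single $F(\tau)$ multiplied by $(a\otimes b)(\tau)$ requires committing to a convention: either $F([\tau_1,\ldots,\tau_m]_{\tb})$ is read through the symmetrization of $D_2^{m-1}Q(x,x)$ over all $m$ lower slots --- the only reading under which $F$ is well defined on unordered trees, and the one forced by the expansion of $h\,S\,\dg H(x,\xh)$ in Lemma \ref{th:avf_gseries}, where every slot carries the same vector --- or the distinguished column child must be tracked explicitly, as the stem decomposition of Proposition \ref{th:Phitwoall} later does. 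A finished proof must adopt one of these readings and redo the coefficient extraction accordingly; announcing that this is ``where I would concentrate the verification'' identifies the crux but does not discharge it.
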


To every stem $s\in SG$ of height $n+1 = \lvert s \rvert$, we associate coefficients $b_{sj}$ and $\phi_{sjk}$. Letting $s_k$ be the $k^{\text{th}}$ node of $s$, we define the function
\begin{equation*}
R(\phi_{sjk},x) \coloneqq
\begin{cases}
    \nabla^2 H(G(\phi_{sjk},x))      & \quad \text{if } s_k = \ab,\\
    Q(x,G(\phi_{sjk},x))   & \quad \text{if } s_k = \tb.
\end{cases}
\end{equation*}
Then we have $h S R(\phi_{sjk},x) G(b,x) = G(\phi_{sjk} \diamond b)$, with $(\phi_{sjk} \diamond b)(\emptyset) = (\phi_{sjk} \diamond b)(\ab) = 0$ and
\begin{equation*}
(\phi_{sjk} \diamond b)(\tau) = 
\begin{cases}
\sum_{i=1}^m \prod_{j=1, j \neq i}^m \phi_{sjk}(\tau_j)b(\tau_i) & \text{for } \tau = [\tau_1,\ldots,\tau_m]_{s_k},\\
0 & \text{if root of } \tau \neq s_k.
\end{cases}
\end{equation*}

Consider now the class of skew-symmetric and consistent approximations to $S$ that can be written in the form
\begin{equation}
\begin{split}
\overline{S}(x,\xh,h) =& \sum_{s \in SG} h^{n} \sum_{j} b_{sj} \left( \prod_{k=1}^n S R(\phi_{sjk},x) + (-1)^{\lvert s\rvert_{\ab}-1} \prod_{k=1}^n S R(\phi_{sj(n-k+1)},x) \right) S 
\end{split}
\label{eq:GseriesS}
\end{equation}
whenever $\xh$ is the solution of
\begin{equation*}
\frac{\xh-x}{h} = \overline{S}(x,\xh,h)\dg H(x,\xh),
\end{equation*}
with $\phi_{sjk}(\emptyset) = 1$ for every $s,j,k$, and with $\sum_j b_{\ab j} = \frac{1}{2}$.

\begin{lemm}\label{th:avf_gseries}
The discrete gradient method \eqref{eq:dgm} with $\overline{S}(x,\xh,h)$ given by \eqref{eq:GseriesS} and $\dg H \in C^\infty(\mathbb{R}^d\times \mathbb{R}^d,\mathbb{R}^d)$ is a G-series method when applied to a constant $S$ skew-gradient system \eqref{eq:constSform}.
\end{lemm}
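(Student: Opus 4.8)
The plan is to proceed exactly as in the proofs of Proposition~\ref{th:avf_bseries} and of the P-series case: I would \emph{posit} that the solution admits a G-series expansion $\xh = G(\Phi,x)$ for some coefficient map $\Phi$ with $\Phi(\emptyset)=1$, substitute this ansatz into the right-hand side of the method, expand it as a G-series using the composition rules established above, and read off a recursion for $\Phi$. The whole claim then reduces to checking that this recursion is well founded, i.e.\ that $\Phi(\tau)$ is determined by the values of $\Phi$ on trees strictly smaller than $\tau$.

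The essential---and only non-routine---step is to expand the rightmost factor $h\,S\,\dg H(x,\xh)$, common to every term once \eqref{eq:GseriesS} is substituted into the method, as a G-series. Writing $\gh(\cdot)=\dg H(x,\cdot)$ and Taylor expanding around $x$ as in \eqref{eq:taylor1}, the $\kappa=0$ term is $h\,S\,\nabla H(x)=h\,F(\ab)$, while for $\kappa\geq 1$ I would invoke Lemma~\ref{th:Qlem} to rewrite
\[
\gh^{(\kappa)}(x)(\xh-x)^\kappa=\tfrac{1}{\kappa+1}D^\kappa\nabla H(x)(\xh-x)^\kappa-\tfrac{2\kappa}{\kappa+1}D_2^{\kappa-1}Q(x,x)(\xh-x)^\kappa .
\]
Multiplying by $S$, the first summand is precisely the elementary differential of a circle-rooted tree and the second that of a triangle-rooted tree, with the $\kappa$ arguments, each equal to $\xh-x=G(\Phi,x)-x$, supplying the grafted subtrees. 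Since every subtree comes from the G-series $G(\Phi,x)$ (hence has circle leaves) and triangle nodes always receive at least one child, the result lies in $TG$. Thus $h\,S\,\dg H(x,\xh)=G(\theta,x)$ with $\theta(\emptyset)=0$, $\theta(\ab)=1$, and
\[
\theta([\tau_1,\ldots,\tau_m]_{\ab})=\tfrac{1}{m+1}\Phi(\tau_1)\cdots\Phi(\tau_m),\qquad
\theta([\tau_1,\ldots,\tau_m]_{\tb})=-\tfrac{2m}{m+1}\Phi(\tau_1)\cdots\Phi(\tau_m).
\]
This is the point where the defining conditions \eqref{eq:dgcond1}--\eqref{eq:dgcond2} of the discrete gradient enter, through Lemma~\ref{th:Qlem}, and where the hypothesis $\dg H\in C^\infty$ is used to expand to all orders; it is what lets an otherwise arbitrary discrete gradient be rendered in terms of the two node types of $TG$. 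I expect this $\dg H$-expansion to be the main obstacle.

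With this in hand the remaining factors compose routinely. Distributing the factor $h\cdot h^{n}$ (with $n=\lvert s\rvert-1$) in \eqref{eq:GseriesS} so that each of the $n$ factors $S\,R(\phi_{sjk},x)$ and the trailing $S\,\dg H$ receives one power of $h$, I would apply the composition rule $h\,S\,R(\phi_{sjk},x)\,G(b,x)=G(\phi_{sjk}\diamond b,x)$ repeatedly, from the innermost factor outward, starting from $G(\theta,x)$. Each stem $s$ then contributes $\phi_{sj1}\diamond\cdots\diamond\phi_{sjn}\diamond\theta$ from the forward product and $\phi_{sjn}\diamond\cdots\diamond\phi_{sj1}\diamond\theta$ from the reversed one. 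Summing over $s\in SG$ and $j$ against $b_{sj}$ and adding the $x$-term gives
\[
\Phi=\hat e+\sum_{s\in SG}\sum_j b_{sj}\Big(\phi_{sj1}\diamond\cdots\diamond\phi_{sjn}\diamond\theta+(-1)^{\lvert s\rvert_{\ab}-1}\,\phi_{sjn}\diamond\cdots\diamond\phi_{sj1}\diamond\theta\Big),
\]
where the $s=\ab$ term reproduces $\theta$ by virtue of $\sum_j b_{\ab j}=\tfrac12$.

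It remains to observe that this defines $\Phi$ consistently. For any $\tau\neq\emptyset$, the right-hand side evaluated at $\tau$ involves $\theta(\tau')$ and $\phi_{sjk}(\tau')$ only for subtrees and forests $\tau'$ obtained by removing at least one node, and each $\diamond$, as well as the defining formulas for $\theta$, strictly lowers the tree order; hence $\Phi(\tau)$ depends solely on the values of $\Phi$ on trees of order $<\lvert\tau\rvert$. The recursion therefore determines a unique $\Phi$, so $\xh=G(\Phi,x)$ is a bona fide G-series, which proves the claim. Once the two-colour decomposition of the second paragraph is set up, everything else mirrors the constant-$S$ B-series and P-series arguments almost verbatim.
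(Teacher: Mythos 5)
Your proposal is correct and follows essentially the same route as the paper's proof: the key step in both is to Taylor expand $h\,S\,\dg H(x,\xh)$ around $x$ and apply Lemma~\ref{th:Qlem} to split each term into a circle-rooted and a triangle-rooted elementary differential, yielding the same coefficient map $\theta$, after which the factors $S\,R(\phi_{sjk},x)$ are composed via the $\diamond$ product to obtain the identical recursion for $\Phi$. Your added remark on the well-foundedness of the recursion is left implicit in the paper but is a harmless (indeed welcome) elaboration.
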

\begin{proof}
Assume that the solution $\xh$ of \eqref{eq:dgm} with $\overline{S}(x,\xh,h)$ given by \eqref{eq:GseriesS} can be written as the G-series $\xh = G(\Phi,x)$. Then, using Lemma \ref{th:Qlem} and $\dg H(x,x) = \nabla H(x)$,
\begin{align*}
h S \dg H(x,\xh) =& \, h S \sum_{m=0}^\infty \frac{1}{m!}D_2^{m} \dg H(x,x)(G(\Phi,x)-x)^m \\
=& \, h S \sum_{m=0}^\infty \frac{1}{(m+1)!}D^m\nabla H(x)(G(\Phi,x)-x)^m \\
& -  h S \sum_{m=1}^\infty \frac{2m}{(m+1)!} D_2^{m-1} Q(x,x) (G(\Phi,x)-x)^m.
\end{align*}
Arguing as in the proof of Lemma III.1.9 in \cite{Hairer06}, we get $h \, S \, \dg H(x,\xh) = G(\theta,x)$, with $\theta(\emptyset) = 0$, $\theta(\ab) = 1$, and
\begin{equation}\label{eq:bone}
\begin{split}
\theta([\tau_1,\ldots,\tau_m]_{\ab}) &= \frac{1}{m+1}\Phi(\tau_1) \cdots \Phi(\tau_m), \\
\theta([\tau_1,\ldots,\tau_m]_{\tb}) &= \frac{-2 m}{m+1}\Phi(\tau_1) \cdots \Phi(\tau_m).
\end{split}
\end{equation}
Then we can write \eqref{eq:dgm} with $\overline{S}(x,\xh,h)$ given by \eqref{eq:GseriesS} as
\begin{align*}
\xh &= x + \sum_{s \in SG} h^{n} \sum_{j} b_{sj} \left( \prod_{k=1}^n S R(\phi_{sjk},x) + (-1)^{\lvert s\rvert_{\ab}-1} \prod_{k=1}^n S R(\phi_{sj(n-k+1)},x) \right) G(\theta,x)\\
&=x + G(\theta,x) + \sum_{s \in SG, \text{ } n>0} \sum b^{s}_j \big( G(\phi_{sj1} \diamond \cdots \diamond \phi_{sjn} \diamond \theta, x) + (-1)^{\lvert s\rvert_{\ab}-1} G(\phi_{sjn} \diamond \cdots \diamond \phi_{sj1} \diamond \theta,x)  \big)\\
&=G(\Phi,x),
\end{align*}
with
\begin{equation}\label{eq:Phiall}
\begin{split}
\Phi =& \hat{e} + \theta + \sum_{s \in SG, \text{ } n>0} \sum_j b_{sj} \left( \phi_{sj1} \diamond \cdots \diamond \phi_{sjn} \diamond \theta + (-1)^{\lvert s\rvert_{\ab}-1} \phi_{sjn} \diamond \cdots \diamond \phi_{sj1} \diamond \theta \right).
\end{split}
\end{equation}
\end{proof}

\begin{theo}
The discrete gradient method \eqref{eq:dgm} with $\overline{S}(x,\xh,h)$ given by \eqref{eq:GseriesS} and $\dg H \in C^{\infty}(\mathbb{R}^d\times \mathbb{R}^d,\mathbb{R}^d)$ is of order $p$ if and only if
\begin{equation}
\Phi(\tau) = \xi(\tau) \quad \text{ for } \lvert\tau\rvert \leq p,
\label{eq:avfocall}
\end{equation}
where $\Phi$ is given by \eqref{eq:Phiall} and the $\xi$ is given by \eqref{eq:gexact}.
\end{theo}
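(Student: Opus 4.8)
The plan is to mirror the proof strategy used for the two analogous order-condition theorems earlier in the paper — the one for the generalized AVF method and the one for the AVF discrete gradient method with general $S$ — now transplanted into the G-series framework. The two ingredients needed are already available. Lemma \ref{th:avf_gseries} expresses the numerical solution as the G-series $\xh = G(\Phi,x)$ with $\Phi$ given by \eqref{eq:Phiall}, while the exact solution is the G-series $x(t_0+h) = G(\xi,x)$ with $\xi$ given by \eqref{eq:gexact}. Subtracting the two series gives
\begin{equation*}
\xh - x(t_0+h) = \sum_{\tau \in TG} \frac{h^{\lvert\tau\rvert}}{\sigma(\tau)}\big(\Phi(\tau)-\xi(\tau)\big) F(\tau)(x),
\end{equation*}
so that the method is of order $p$ precisely when every term with $\lvert\tau\rvert \leq p$ vanishes. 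The $C^\infty$ hypothesis on $\dg H$ is inherited directly from Lemma \ref{th:avf_gseries} and guarantees that all the differentials appearing in the $F(\tau)$, and hence the whole formal expansion, are well defined.

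The backward implication is immediate: if $\Phi(\tau)=\xi(\tau)$ for all $\lvert\tau\rvert\leq p$, then the displayed difference is $\mathcal{O}(h^{p+1})$, so the local error is of order $p+1$ and the method has order $p$. For the forward implication I would invoke the linear independence of the elementary differentials $F(\tau)$, $\tau\in TG$: once independence is granted, the condition $\xh - x(t_0+h) = \mathcal{O}(h^{p+1})$ forces each coefficient $\Phi(\tau)-\xi(\tau)$ with $\lvert\tau\rvert\leq p$ to vanish, which is exactly \eqref{eq:avfocall}.

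The main obstacle is therefore justifying this independence in the G-series setting, which is genuinely richer than the classical B-series case because the trees carry two node types: a circle root produces the usual differential $S\,D^m\nabla H$, whereas a triangle root produces the new differential $S\,D_2^{m-1}Q(x,x)$. My plan here is to exploit that we are working with a \emph{general} discrete gradient. The skew-symmetric object $Q$ encodes the antisymmetric part of $D_2\dg H$ and is not determined by $H$ alone; indeed the relations derived around \eqref{eq:QD2} and in Lemma \ref{th:Qlem} show that the tensors $D_2^{\kappa-1}Q(x,x)$ and $D^\kappa\nabla H(x)$ can be prescribed independently once $\dg H$ is allowed to range over all admissible discrete gradients. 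Treating the derivatives of $Q$ as an additional family of free skew/symmetric multilinear maps alongside the derivatives of $\nabla H$, distinct trees $\tau\in TG$ yield distinct products of this enlarged alphabet, so the classical argument for independence of elementary differentials applies to the combined family. I would make this rigorous by specializing to a sufficiently generic $H$ and discrete gradient on a space $\mathbb{R}^d$ of large enough dimension, ruling out accidental linear relations among the $F(\tau)$ and thereby reducing the claim to the known independence result for ordinary B-series elementary differentials, applied to the circle- and triangle-type derivatives simultaneously.
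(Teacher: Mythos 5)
Your proposal is correct and follows essentially the same route the paper takes: the paper states this theorem without a separate proof, relying --- exactly as it did for the generalized AVF order-condition theorem --- on comparing the expansion $\xh = G(\Phi,x)$ furnished by Lemma \ref{th:avf_gseries} with the exact solution $G(\xi,x)$ and invoking the linear independence of the elementary differentials. Your additional paragraph justifying independence of the enlarged family $\{F(\tau)\}_{\tau\in TG}$, via the freedom to prescribe the diagonal derivatives of $Q$ independently of those of $\nabla H$, addresses a point the paper leaves entirely implicit.
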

We remark that $\dg H \in C^{\infty}(\mathbb{R}^d\times \mathbb{R}^d,\mathbb{R}^d)$ is a necessary condition for the method to be a G-series method for all $S$ and $H$, but not for its order; $\dg H \in C^{p-1}(\mathbb{R}^d\times \mathbb{R}^d,\mathbb{R}^d)$ is sufficient for the scheme to be of order $p$. The following proposition is presented without its proof, which follows along the lines of the proof of Proposition \ref{th:Phitwo}.

\begin{prop}\label{th:Phitwoall}
The $\Phi$ of \eqref{eq:avfocall} satisfies
\begin{equation}\label{eq:Phitwo2all}
\Phi(\tau) = \hat{e}(\tau) + \theta(\tau) +\sum_{i \text{ s.t. } n\geq 1}\Lambda(\hat{\tau}^i,s^i)
\end{equation}
where $\hat{e}(\emptyset) = 1$ and $\hat{e}(\tau) = 0$ for all $\tau \neq \emptyset$, $\theta$ is given by \eqref{eq:bone}, and
\begin{equation}
\begin{split}
\Lambda(\hat{\tau}^i,s^i) = \theta([\mu_{n+1}^i]_{s_{n+1}^i}) \big( & \sum_j b_{s^ij} \phi_{s^ij1}(\mu_1^i) \cdots \phi_{s^ijn}(\mu_n^i)  \\
&+ (-1)^{\lvert s^i\rvert_{\ab}-1} \sum_j b_{\hat{s}^ij} \phi_{\hat{s}^ijn}(\mu_1^i) \cdots \phi_{\hat{s}^ij1}(\mu_n^i)\big),
\end{split}\label{eq:Lambdaall}
\end{equation}
with $\hat{s}^i$ given by $\hat{s}^i_k = s^i_{n-k+1}$ for $k=1,\ldots,n$, and $\hat{s}^i_{n+1} = s^i_{n+1}$.
\end{prop}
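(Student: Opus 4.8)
The plan is to imitate the proof of Proposition \ref{th:Phitwo} almost verbatim, replacing the $\times$-product by the $\diamond$-product and carrying along the extra circle/triangle label $\ab,\tb$. Starting from the expression \eqref{eq:Phiall} for $\Phi$, the whole task reduces to evaluating, for a fixed tree $\tau\in TG$, each iterated product $(\phi_{sj1}\diamond\cdots\diamond\phi_{sjn}\diamond\theta)(\tau)$ and its reverse $(\phi_{sjn}\diamond\cdots\diamond\phi_{sj1}\diamond\theta)(\tau)$, and then reorganizing the outer sum over stems. As in the earlier proof I would let $n_i+1$ be the level of node $i$, collect the children of $i$ in a set $C_i$, and use the generalized (type-aware) Butcher product together with the recursive shape of the forests $\mu^i_k$ given by the decomposition $\tau=[\mu^i_1]_{s^i_1}\circ\cdots\circ[\mu^i_{n+1}]_{s^i_{n+1}}$.

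The central step is the recursion. Evaluating $\phi_{sj1}\diamond(\text{rest})$ at $\tau$ by the $\diamond$-product formula, the new feature compared with the constant-$S$ case is that it vanishes unless the root of $\tau$ carries the type $s_1$ prescribed by the first stem node; when it matches, one peels off $\phi_{sj1}(\mu_1^{i_1})$ for each distinguished child $i_1\in C_1$ and is left with $(\phi_{sj2}\diamond\cdots\diamond\theta)$ on the subtree rooted at $i_1$. Iterating $n$ times, the factor $\phi_{sjk}$ forces the level-$k$ stem node to have type $s_k$, so the surviving chains $i_1\in C_1,\ldots,i_n\in C_{i_{n-1}}$ are exactly the nodes $i$ on level $n+1$ whose first $n$ stem types $s_1^i,\ldots,s_n^i$ agree with $s_1,\ldots,s_n$. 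This gives
\[
(\phi_{sj1}\diamond\cdots\diamond\phi_{sjn}\diamond\theta)(\tau)=\sum_i \phi_{sj1}(\mu_1^i)\cdots\phi_{sjn}(\mu_n^i)\,\theta([\mu_{n+1}^i]_{s_{n+1}^i}),
\]
the sum running over the matching level-$(n+1)$ nodes; the last type $s_{n+1}^i$ enters only through the two cases of $\theta$ in \eqref{eq:bone}, since $\theta$ is defined on both circle- and triangle-rooted trees. The reverse product is handled identically, producing the reverse-ordered factors $\phi_{sjn}(\mu_1^i)\cdots\phi_{sj1}(\mu_n^i)$ over the nodes whose stem matches $s$ read backwards.

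Finally I would interchange the order of summation in \eqref{eq:Phiall}, passing from a sum over stems $s\in SG$ (then over matching nodes $i$) to a sum over nodes $i$ of $\tau$ (then the unique matching stem). For each node $i$ on level $n+1\ge 2$ the forward product contributes through the stem $s=s^i$ and the reverse product through $s=\hat s^i$, the reversal of the first $n$ stem nodes, which is precisely the pairing recorded in \eqref{eq:Lambdaall}. The sign $(-1)^{\lvert s\rvert_{\ab}-1}$ is common to both terms, since reversing the stem order leaves the circle count unchanged, so $\lvert\hat s^i\rvert_{\ab}=\lvert s^i\rvert_{\ab}$. Adding back the leading term $\theta(\tau)$, which comes directly from $h\,S\,\dg H(x,\xh)=G(\theta,x)$, and the term $\hat e(\tau)$, then yields exactly \eqref{eq:Phitwo2all} with $\Lambda$ as in \eqref{eq:Lambdaall}.

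The hard part is the type bookkeeping rather than any analytic difficulty: one must verify that the $\diamond$-product's type constraint propagates correctly level by level, so that a given stem $s$ selects exactly the nodes with the matching type-sequence, and that the triangle-rooted subtrees hanging off the distinguished path are accounted for solely through the second case of $\theta$ in \eqref{eq:bone}. Care is also needed to confirm that the forward expansion indexed by $s^i$ and the reverse expansion indexed by $\hat s^i$ land on the same node $i$ and assemble into the single bracket of \eqref{eq:Lambdaall}, and that the separate $b_{s^ij}$ and $b_{\hat s^ij}$ sums remain consistent in the palindromic case $s^i=\hat s^i$.
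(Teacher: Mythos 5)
Your proof is correct and is precisely the argument the paper intends: the paper omits the proof of Proposition \ref{th:Phitwoall}, stating only that it follows along the lines of the proof of Proposition \ref{th:Phitwo}, and your reconstruction carries out exactly that adaptation of the telescoping argument starting from \eqref{eq:Phiall}. The two points that genuinely need checking — that the type constraint of the $\diamond$-product forces the surviving level-$(n+1)$ nodes to be exactly those whose stem prefix matches $s$ (respectively $s$ read backwards, for the reversed term), and that $\lvert\hat{s}^i\rvert_{\ab}=\lvert s^i\rvert_{\ab}$ so that the sign is common to both terms of \eqref{eq:Lambdaall} — are both identified and resolved correctly.
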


As for the AVF method, one does not need to find the order conditions for every tree; it suffices to find the order condition for each energy-preserving linear combination of the form
\begin{equation}
\omega = [\mu_1]_{s_1} \circ [\mu_2]_{s_2} \circ \cdots [\mu_n]_{s_n} \circ [\emptyset]_{\ab} + (-1)^n \, [\mu_n]_{s_n} \circ [\mu_{n-1}]_{s_{n-1}} \circ \cdots [\mu_1]_{s_1} \circ [\emptyset]_{\ab}.
\label{eq:linenall}
\end{equation}
The above does not give every energy-preserving linear combination of the elementary differentials of G-series; it gives the combinations one gets in the scheme \eqref{eq:dgm} with $\overline{S}(x,\xh,h)$ given by \eqref{eq:GseriesS}. Now, let again $I_l$ and $I_n$ denote the sets of leaf nodes and non-leaf nodes, respectively. If we assume the conditions for order $< p$ to be satisfied, we have an equivalent order condition to \eqref{eq:avfocall} by
\begin{equation}
\sum_{i\in I_l}\Lambda(\hat{\tau}^i,s^i) = \xi(\tau) - \hat{e}(\tau) - \sum_{i\in I_n} \Lambda(\hat{\tau}^i,s^i),
\label{eq:ordercondn2all}
\end{equation}
where we may use the relation
$$
\Lambda(\lbrace\mu^i_1,\ldots,\mu^i_{n},\mu^i_{n+1}\rbrace,s^i) = \hat{\theta}([\mu_{n+1}^i]_{s_{n+1}^i}) \Lambda(\lbrace\mu^i_1,\ldots,\mu^i_{n},\emptyset\rbrace,\bar{s}^i)
$$
to calculate $\Lambda(\hat{\tau}^i)$ for $i \in I_n$. Here $\bar{s}^i$ is $s^i$ with $s^i_{n+1}$ replaced by $\ab$, and $\hat{\theta}(\emptyset) = 0$, $\hat{\theta}(\ab) = 1$, and
\begin{equation}
\begin{split}
\hat{\theta}([\tau_1,\ldots,\tau_m]_{\ab}) &= \frac{1}{m+1}\xi(\tau_1) \cdots \xi(\tau_m),\\
\hat{\theta}([\tau_1,\ldots,\tau_m]_{\tb}) &= \frac{-2 m}{m+1}\xi(\tau_1) \cdots \xi(\tau_m).
\end{split}
\label{eq:bhat}
\end{equation}
Note that $\Lambda(\hat{\tau}^1,s^1) = \hat{\theta}(\tau)$.

\begin{example}
Consider $\tau = \tabtabbb$, which is part of two combinations of the form \eqref{eq:linenall}:
$\omega = \tabtabbb + \ttababbb$ and $\omega = 2 \tabtabbb$.
We calculate
\begin{align*}
\Lambda(\hat{\tau}^1,s^1) &= \Lambda(\lbrace(\ab,\tabb)\rbrace,\tb) = \hat{\theta}(\tabtabbb) = 0,\\
\Lambda(\hat{\tau}^2,s^2) &= \Lambda(\lbrace\tabb,\emptyset\rbrace,\tabb) = \sum_j b_{s^2j}\phi_{s^2j1}(\tabb)+\sum_j b_{\hat{s}^2j}\phi_{\hat{s}^2j1}(\tabb) = 2\sum_j b_{s^2j}\phi_{s^2j1}(\tabb)\\
\Lambda(\hat{\tau}^3,s^3) &= \Lambda(\lbrace\ab,\ab\rbrace,\ttbb) = \hat{\theta}(\tabb)\Lambda(\lbrace\ab,\emptyset\rbrace,\tabb) = \hat{\theta}(\tabb)(-\frac{1}{2}\hat{\theta}(\tababb)) = -1 (-\frac{1}{2}(-\frac{4}{3}))=-\frac{2}{3},\\
\Lambda(\hat{\tau}^4,s^4) &= \Lambda(\lbrace\ab,\emptyset,\emptyset\rbrace,\ttabbb) = \sum_j b_{s^4j}\phi_{s^4j1}(\ab)+\sum_j b_{\hat{s}^4j}\phi_{\hat{s}^4j2}(\ab) = \sum_{j,k} b_{s^4j}\phi_{s^4jk}(\ab).
\end{align*}
Thus \eqref{eq:ordercondn2all} becomes
\begin{equation*}
2 \sum_j b_{s^2 j} \phi_{s^2j1}(\tabb) + \sum_{j,k} b_{s^4j}\phi_{s^4jk}(\ab) = \frac{2}{3}
\end{equation*}
for $\tau = \tabtabbb$. We do similar calculations for $\ttababbb$, and get \eqref{eq:ordercondn2all} for that to be
\begin{equation*}
2 \sum_{j,k} b_{s^4j}\phi_{s^4jk}(\ab) = \frac{4}{3}.
\end{equation*}
Thus we have the order condition
\begin{equation}
\sum_{j,k} b_{s^4j}\phi_{s^4jk}(\ab) = \frac{2}{3}
\label{eq:ordcondex}
\end{equation}
for $\omega = \tabtabbb + \ttababbb$, and
\begin{equation*}
\sum_j b_{s^2 j} \phi_{s^2j1}(\tabb)  = 0
\end{equation*}
for $\omega = 2 \tabtabbb$.
Note that although the tree $\ttababbb$ gives an energy-preserving elementary differential, this by itself is not of the form \eqref{eq:linenall}.
\end{example}

\begin{table}[!ht]
\begin{center}
\begin{tabular}{|c|c|c|c|}
\hline
$\lvert\tau\rvert$ & $\omega$ & $s$ & Order condition\\ \hline
$1$	& $\ab$ & $\ab$ & $\sum_j b_{sj} = \frac{1}{2}$ \\ \hline
$2$	& $\tabb$ & $\tabb$ & $\sum_j b_{sj} = \frac{1}{2}$ \\ \hline
$3$	& $\tababb$ & $\tabb$ & $\sum_{j} b_{sj} \phi_{sj1}(\ab) = \frac{1}{3}$ \\
	& $\ttabbb$ & $\ttabbb$ & $\sum_j b_{sj} = \frac{1}{2}$ \\
	& $\atabbb - \taabbb$ & $\atabbb$ & $\sum_{j} b_{sj} -\sum_{j} b_{\bar{s}j} = 0$  \\
	& $\aaabbb$ & $\aaabbb$ & $\sum_j b_{sj} = -\frac{1}{24}$ \\ \hline
$4$	& $\tabababb$ & $\tabb$ & $\sum_{j} b_{sj} \phi_{sj1}(\ab)^2 = \frac{1}{4}$ \\
	& $\tabtabbb$ & $\tabb$ & $\sum_j b_{s j} \phi_{sj1}(\tabb)  = 0$ \\
	& $\tabaabbb$ & $\tabb$ & $\sum_j b_{s j} \phi_{sj1}(\aabb)  = \frac{1}{6}$ \\
	& $\ttababbb + \tabtabbb$ & $\ttabbb$ & $\sum_{j,k} b_{sj}\phi_{sjk}(\ab) = \frac{2}{3}$ \\
	& $\atababbb - \tabaabbb$ & $\atabbb$ & $\sum_{j} b_{sj} \phi_{sj2}(\ab) - \sum_{j} b_{\bar{s}j} \phi_{\bar{s}j1}(\ab) = 0$  \\
	& $\taababbb - \aabtabbb$ & $\taabbb$ & $\sum_{j} b_{sj} \phi_{sj2}(\ab) - \sum_{j} b_{\bar{s}j} \phi_{\bar{s}j1}(\ab) = 0$  \\
	& $\aaababbb+\aabaabbb$ & $\aaabbb$ & $\sum_{j,k} b_{sj} \phi_{sjk}(\ab)= -\frac{1}{24}$ \\ 
	& $\tttabbbb$ & $\tttabbbb$ & $\sum_{j} b_{sj} = \frac{1}{2}$ \\
	& $\ttaabbbb-\attabbbb$ & $\ttaabbbb$ & $\sum_{j} b_{sj} -\sum_{j} b_{\bar{s}j} = 0$ \\
	& $\taaabbbb+\aatabbbb$ & $\taaabbbb$ & $\sum_{j} b_{sj} - \sum_{j} b_{\bar{s}j} = 0$ \\
	& $\ataabbbb$ & $\ataabbbb$ & $\sum_{j} b_{sj} = 0 $ \\ \hline
\end{tabular}
\end{center}
\caption{Energy-preserving linear combinations of the form \eqref{eq:linenall} and their associated order conditions for the discrete gradient method \eqref{eq:dgm} with $\overline{S}(x,\xh,h)$ given by \eqref{eq:GseriesS}.}
\label{tab:avfocall}
\end{table}

From the order conditions in Table \ref{tab:avfocall}, we can find an $\overline{S}(x,\xh,h)$ so that $\eqref{eq:dgm}$ becomes a fourth order scheme for any $\dg H \in C^{3}(\mathbb{R}^d\times \mathbb{R}^d,\mathbb{R}^d)$. For instance, the stem $s=\ttabbb$ has the related order conditions $\sum_j b_{sj} = \frac{1}{2}$ and $\sum_{j,k} b_{sj}\phi_{sjk}(\ab) = \frac{2}{3}$, which sets the requirements for the term 
$$h^2 \sum_{j} b_{sj} (S Q(x,z_{1j}) S Q(x,z_{2j})+S Q(x,z_{2j}) S Q(x,z_{1j})) S.$$
Choosing $b_{s1} = \frac{1}{2}$ and $z_{11} = z_{21} = x+\frac{2}{3}h f(x)$, we have fulfilled these conditions. Likewise, finding terms that satisfy the other order conditions, we get an approximation of $S$ that ensures fourth order convergence, like the $\overline{S}(x,\xh,h)$ given by \eqref{eq:4thorderDGM}.

\subsection{The general case}\label{sect:gengeng}
Allowing for $S$ to be a function of the solution, we define now the set $TV$ of bi-colored trees whose nodes are either circles of triangles, and whose leaves on the cut tree $\tau^{b}$, defined as the mono-colored tree left when all branches between black and white nodes are cut off, are always circles. Denoting as before black-rooted subtrees by $\tau_i$ and white-rooted subtrees by $\bar{\tau}_i$, the elementary differentials of trees $\tau \in TV$ are given by $F(\ab)(x) = F(\AB)(x) = f(x) = S \nabla H(x)$ and
\begin{equation*}
F(\tau)(x) =
\begin{cases}
S^{(l)} D^m\nabla H(x)(F(\tau_1)(x),\ldots,F(\bar{\tau}_l)(x)) & \text{for } \tau = [\tau_1,\ldots,\tau_m,\bar{\tau}_1,\ldots,\bar{\tau}_l,]_{\abg}, \\
S^{(l)} D_2^{m-1}Q(x,x) (F(\tau_1)(x),\ldots,F(\bar{\tau}_l)(x)) & \text{for } \tau = [\tau_1,\ldots,\tau_m,\bar{\tau}_1,\ldots,\bar{\tau}_l,]_{\tbg}, 
\end{cases}
\end{equation*}
where $\abg$ can be either $\ab$ or $\AB$ and $\tbg$ can be either $\tb$ or $\TB$. Let $TV_{\ab}$ denote the set of trees in $TV$ with black roots, either of the shape $\ab$ or $\tb$.
\begin{definition}
A V-series is a formal series of the form
\begin{equation}\label{eq:vseries}
V(\phi,x) = \phi(\emptyset)x + \sum_{\tau\in TV_{\ab}}\frac{h^{\lvert\tau\rvert}}{\sigma(\tau)}\phi(\tau)F(\tau)(x),
\end{equation}
where $\phi : TV_{\ab} \cup \lbrace\emptyset\rbrace \rightarrow \mathbb{R}$ is an arbitrary mapping, and the symmetry coefficient $\sigma$ is given by \eqref{eq:symcoeff}.
\end{definition}
Proofs of the theorems in this subsection can be obtained similarly to the proofs in sections \ref{sect:avfgen} and \ref{sect:genconst}, and are therefore omitted.

We redefine
\begin{equation*}
R(\phi_{sjk},x) \coloneqq
\begin{cases}
    \nabla^2 H(G(\phi_{sjk},x))      & \quad \text{if } s_k = \ab,\\
    Q(x,V,(\phi_{sjk},x))   & \quad \text{if } s_k = \tb.
\end{cases}
\end{equation*}
and consider now approximations of $S(x)$ that can be written as
\begin{equation}
\begin{split}
\overline{S}(x,\xh,h) =& \sum_{s \in SG} h^{n} \sum_{j} b_{sj} \Bigg( \prod_{k=1}^n S(V(\psi_{sjk},x)) R(\phi_{sjk},x) \cdot S(V(\psi_{sj(n+1)},x)) \\
&+ (-1)^{\lvert s\rvert_{\ab}-1} S(V(\psi_{sj(n+1)},x)) \prod_{k=1}^n R(\phi_{sj(n-k+1)},x) S(V(\psi_{sj(n-k+1)},x)) \Bigg) 
\end{split}
\label{eq:VseriesS}
\end{equation}
whenever $\xh$ is the solution of
\begin{equation*}
\frac{\xh-x}{h} = \overline{S}(x,\xh,h)\dg H(x,\xh),
\end{equation*}
with $\phi_{sjk}(\emptyset) = \psi_{sjk}(\emptyset) = 1$ for every 
$s,j,k$, and with $\sum_j b_{\ab j} = \frac{1}{2}$.

\begin{theo}
The discrete gradient scheme \eqref{eq:dgm} with the approximation of $S(x)$ given by \eqref{eq:VseriesS} and $\dg H \in C^{\infty}(\mathbb{R}^d\times \mathbb{R}^d,\mathbb{R}^d)$ is a V-series method. It can be written $\xh = V(\Phi,x)$, with
\begin{equation}
\begin{split}
\Phi = & \hat{e} + \sum_{s \in SG} \sum_j b_{sj} \, \big( (\psi_{sj1}, \phi_{sj1}) \diamond \cdots \diamond (\psi_{sjn}, \phi_{sjn}) \diamond \hat{\theta}(\psi_{sj(n+1)}) \\
& + (-1)^n \, (\psi_{sj(n+1)}, \phi_{sjn}) \diamond \cdots \diamond (\psi_{sj2}, \phi_{sj1}) \diamond \hat{\theta}(\psi_{sj1})\big).
\end{split}
\label{eq:Phigengen}
\end{equation}
where
\begin{equation}\label{eq:bonegen}
\begin{split}
&\hat{\theta}(a)([\tau_1,\ldots,\tau_m,\bar{\tau}_1,\ldots,\bar{\tau}_l]_{\ab}) = \frac{1}{m+1}\Phi(\tau_1) \cdots \Phi(\tau_m)a(\bar{\tau}_1) \cdots a(\bar{\tau}_l), \\
&\hat{\theta}(a)([\tau_1,\ldots,\tau_m,\bar{\tau}_1,\ldots,\bar{\tau}_l]_{\tb}) = \frac{-2 m}{m+1}\Phi(\tau_1) \cdots \Phi(\tau_m)a(\bar{\tau}_1) \cdots a(\bar{\tau}_l).
\end{split}
\end{equation}
The scheme is of order $p$ if and only if
\begin{equation}
\Phi(\tau) = \xi(\tau) \quad \text{ for } \lvert\tau\rvert \leq p,
\label{eq:gengenoc}
\end{equation}
where
\begin{equation}
\xi(\tau) = 
\begin{cases}
\frac{1}{\gamma(\tau)} & \text{if } \tau \in TP,\\
0 & \text{otherwise}.
\end{cases}
\label{eq:gexactgen}
\end{equation}
\end{theo}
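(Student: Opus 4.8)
The plan is to mirror the two proofs this theorem generalizes: the P-series argument leading to \eqref{eq:avfps} and \eqref{eq:Phigen}, which accommodates the non-constant $S$ through bi-colored trees, and the G-series argument of Lemma \ref{th:avf_gseries} leading to \eqref{eq:Phiall}, which accommodates a general discrete gradient through the circle/triangle shaping that encodes $\nabla^2 H$ versus $Q$. The V-series setting simply fuses these two bookkeeping devices, so the proof becomes the exercise of carrying both labels (black/white for the argument of $S$, circle/triangle for the differentiated factor) through the same manipulations. I would begin with the ansatz $\xh = V(\Phi,x)$ and seek the self-referential identity for $\Phi$ that the series must satisfy, to be solved recursively by order.

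First I would expand the discrete gradient. Writing $\xh - x = V(\Phi,x) - x$ and applying Lemma \ref{th:Qlem} exactly as in the proof of Lemma \ref{th:avf_gseries}, the series $\dg H(x,\xh)$ splits into a part built from $D^m \nabla H(x)$ with weight $\tfrac{1}{(m+1)!}$ and a part built from $D_2^{m-1} Q(x,x)$ with weight $-\tfrac{2m}{(m+1)!}$. Pre-multiplying by the outermost factor $h\,S(V(a,x))$ and Taylor-expanding $S$ about $x$ — which is precisely what produced the white-subtree factors $a(\bar\tau_i)$ in \eqref{eq:astarb} — I would obtain $h\,S(V(a,x))\dg H(x,\xh) = V(\hat\theta(a),x)$ with $\hat\theta(a)$ given by \eqref{eq:bonegen}: the two cases there are exactly the circle-root weight $\tfrac{1}{m+1}$ and triangle-root weight $-\tfrac{2m}{m+1}$ inherited from Lemma \ref{th:Qlem}, decorated by the black factors $\Phi(\tau_i)$ from the differentiated gradient and the white factors $a(\bar\tau_i)$ from the expansion of $S$.

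Next I would promote the products. The right tool should be a single grafting product $(\psi,\phi)\diamond b$ defined so that $h\,S(V(\psi,x))\,R(\phi,x)\,V(b,x) = V((\psi,\phi)\diamond b,x)$, where $R$ is $\nabla^2 H$ or $Q$ according to the shape of the node. This is the common refinement of the bi-colored product of Lemma \ref{th:norsettP} (tracking the white argument $\psi$ of $S$ and the black argument $\phi$ of the differentiated factor) and the shape-sensitive product $\diamond$ of Section \ref{sect:genconst} (vanishing unless the root shape matches $R$). Applying this product recursively to peel off the $n$ factors $S(V(\psi_{sjk},x))R(\phi_{sjk},x)$ in each summand of \eqref{eq:VseriesS}, and using $\hat\theta$ for the innermost factor, collapses $\xh$ into the V-series $V(\Phi,x)$ with $\Phi$ given by \eqref{eq:Phigengen}; the sign $(-1)^{\lvert s\rvert_{\ab}-1}$ and the reversed index ordering reproduce the second term there just as in \eqref{eq:Phiall} and \eqref{eq:Phigen}.

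For the order statement I would compare $V(\Phi,x)$ with the exact flow. Since the exact solution of \eqref{eq:odeform} is the P-series $P(1/\gamma,x)$, and a P-series is a V-series supported on the purely circle-node trees, its V-series coefficient is exactly the $\xi$ of \eqref{eq:gexactgen}: it equals $\tfrac{1}{\gamma(\tau)}$ on $\tau\in TP$ and $0$ on any tree carrying a triangle node, because the $Q$-differentials never enter the true flow. The equivalence \eqref{eq:gengenoc} then follows by matching coefficients, which is legitimate provided the elementary differentials $\{F(\tau)\}_{\tau\in TV_{\ab}}$ are independent. This independence is the step I expect to require the most care: one must check that, over all admissible data, the $Q$-differentials are genuinely free of the $\nabla H$-differentials — they are, since by \eqref{eq:theQ} and \eqref{eq:DDH} the symmetric part of $D_2\dg H$ is pinned to $\tfrac12\nabla^2 H$ while its skew part $Q$, together with the higher $D_2^{\kappa-1}Q$, can be prescribed independently by choosing the discrete gradient — so that no nontrivial linear relation among the $F(\tau)$ holds identically. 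Everything else is the routine recursive bookkeeping already rehearsed in the constant-$S$ and AVF cases.
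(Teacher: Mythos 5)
Your proposal is correct and follows exactly the route the paper intends: the paper omits this proof, stating only that it ``can be obtained similarly to the proofs in sections \ref{sect:avfgen} and \ref{sect:genconst},'' and your argument is precisely that fusion --- Lemma \ref{th:Qlem} splitting $\dg H$ into $\nabla H$- and $Q$-differentials to produce the two cases of $\hat\theta$ in \eqref{eq:bonegen}, the combined product $(\psi,\phi)\diamond b$ merging Lemma \ref{th:norsettP} with the shape-sensitive product of Section \ref{sect:genconst}, and coefficient comparison against the exact P-series flow. Your explicit attention to the independence of the elementary differentials (needed for the ``only if'' direction) is a point the paper glosses over even in the B-series case, and is a welcome addition rather than a deviation.
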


As in Section \ref{sect:hos}, we cut the branches between black and white nodes, regardless of the shape of the nodes, and denote this tree by $\tau^{b}$. Number the nodes and reattach the cut-off parts. For the node $i$ and the corresponding stem $s^i$, there exists a unique set of forests $\hat{\tau}^i = \lbrace(\mu^i_1,\eta^i_1),\ldots,(\mu^i_{n+1},\eta^i_{n+1})\rbrace$ such that 
\begin{equation*}
\tau=[(\mu_1^i,\eta^i_1)]_{s^i_1} \circ \cdots [(\mu^i_{n},\eta^i_{n})]_{s^i_n} \circ [(\mu^i_{n+1},\eta^i_{n+1})]_{s^i_{n+1}}
\end{equation*}

\begin{prop}\label{th:Phitwoallgen}
The $\Phi$ of \eqref{eq:Phigengen} satisfies
\begin{equation}\label{eq:Phitwo2allgen}
\Phi(\tau) = \hat{e}(\tau) + \sum_{i=1}^{\lvert\tau^{b}\rvert} \Lambda(\hat{\tau}^i,s^i)
\end{equation}
where $\hat{e}(\emptyset) = 1$ and $\hat{e}(\tau) = 0$ for all $\tau \neq \emptyset$, and
\begin{equation}
\begin{split}
\Lambda(\hat{\tau}^i,s^i) = & \theta([\mu_{n+1}^i]_{s_{n+1}^i}) \big( \sum_j b_{s^ij} \psi_{s^ij1}(\eta_1^i) \phi_{s^ij1}(\mu_1^i) \cdots \phi_{s^ijn}(\mu_n^i)\psi_{s^ij(n+1)}(\eta_{n+1}^i)  \\
&+ (-1)^{\lvert s^i\rvert_{\ab}-1} \sum_j b_{\hat{s}^ij} \psi_{\hat{s}^ij(n+1)}(\eta_1^i) \phi_{\hat{s}^ijn}(\mu_1^i) \cdots \phi_{\hat{s}^ij1}(\mu_n^i)\psi_{\hat{s}^ij1}(\eta_n^i)\big),
\end{split}\label{eq:Lambdaallgen}
\end{equation}
with $\theta$ given by \eqref{eq:bone} and $\hat{s}^i$ given by $\hat{s}^i_k = s^i_{n-k+1}$ for $k=1,\ldots,n$, and $\hat{s}^i_{n+1} = s^i_{n+1}$.
\end{prop}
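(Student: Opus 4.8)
The plan is to transcribe the arguments of Proposition \ref{th:Phitwogen} and Proposition \ref{th:Phitwoall} simultaneously, letting the former govern the color bookkeeping and the latter the node-shape bookkeeping. By the preceding theorem the solution is the V-series $\xh = V(\Phi,x)$, so I start from the closed form \eqref{eq:Phigengen}. Each summand there is an iterated $\diamond$-product of the pairs $(\psi_{sjk},\phi_{sjk})$ terminated by $\hat\theta(\psi_{sj(n+1)})$, where this $\diamond$ is the V-series product fusing the shape-selective $\diamond$ of Section \ref{sect:genconst} with the color-routing of the $(a,b)\times c$ product of Lemma \ref{th:norsettP}: a black node feeds its attached forest into a $\phi$-argument, a white node into a $\psi$-argument, and the shape $s_k$ of the node decides (circle versus triangle) whether the corresponding factor survives. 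The goal is to evaluate each summand on a fixed $\tau \in TV_{\ab}$ and collapse the nested sums into the single node-indexed sum \eqref{eq:Phitwo2allgen}.

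First I would form the cut tree $\tau^b$, number its nodes $1,\dots,\lvert\tau^b\rvert$, and for each node $i$ on level $n+1$ read off its stem $s^i$ and the forest pairs $(\mu^i_k,\eta^i_k)$ from the decomposition displayed before the statement. Writing $n_i+1$ for the level of node $i$ and $C_i$ for its children, the Butcher-product identity
\begin{equation*}
[(\mu_{n_i+1}^i,\eta_{n_i+1}^i)]_{s^i_{n_i+1}} = [(\mu_{n_k}^k,\eta_{n_k}^k)]_{s^k_{n_k}} \circ [(\mu_{n_k+1}^k,\eta_{n_k+1}^k)]_{s^k_{n_k+1}} \quad \text{for all } k\in C_i
\end{equation*}
holds exactly as in the two model proofs. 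Applying the $\diamond$-recursion once strips the outermost pair: summing over $k\in C_1$ yields the factor $\psi_{sj1}(\eta_1^{k})\phi_{sj1}(\mu_1^{k})$ and re-evaluates the remaining product at $[(\mu_2^{k},\eta_2^{k})]_{s^k_2}$, with the shape $s^k_2$ selecting the contributing children precisely as the triangle/circle split does in Section \ref{sect:genconst}.

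Iterating this peeling $n$ times replaces the nested child-sums by a single sum over the nodes $i$ on level $n+1$, giving
\begin{equation*}
(\cdots)(\tau) = \sum_{i \text{ on level } n+1} \psi_{sj1}(\eta_1^{i})\phi_{sj1}(\mu_1^{i}) \cdots \phi_{sjn}(\mu_n^{i})\,\psi_{sj(n+1)}(\eta_{n+1}^{i})\,\theta([\mu_{n+1}^{i}]_{s^i_{n+1}}),
\end{equation*}
where the terminal $\hat\theta$ factors through the identity $\hat\theta(a)([\mu,\eta]) = a(\eta)\,\theta([\mu])$ read off from \eqref{eq:bonegen} and \eqref{eq:bone}, the root shape $s^i_{n+1}$ installing the coefficient $\tfrac{1}{m+1}$ for a circle node and $\tfrac{-2m}{m+1}$ for a triangle node. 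The reversed summand of \eqref{eq:Phigengen} is treated identically but with the reversed stem $\hat s^i$, the swapped $\psi$/$\phi$ arguments, and the sign $(-1)^{\lvert s^i\rvert_{\ab}-1}$ carried by the reversed product in the definition \eqref{eq:VseriesS} of $\overline{S}$, matching \eqref{eq:Lambdaall}. Substituting both contributions into \eqref{eq:Phigengen} and regrouping by $i$ produces \eqref{eq:Lambdaallgen}, hence \eqref{eq:Phitwo2allgen}.

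The main obstacle is carrying the two independent decorations through the recursion at once: the shape, which via the stem $s^i$ decides at each level which $\diamond$-factor survives and whether $\tfrac{1}{m+1}$ or $\tfrac{-2m}{m+1}$ appears, and the color, which routes each attached forest into a $\phi$- or $\psi$-slot. Ensuring that in the reversed term the reversed stem $\hat s^i$ stays correctly paired with the $\psi$/$\phi$ arguments, and that the accumulated sign collapses to exactly $(-1)^{\lvert s^i\rvert_{\ab}-1}$, is the delicate point; the remainder is a faithful merge of the two proofs already carried out.
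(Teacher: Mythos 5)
Your proposal is correct and takes exactly the route the paper intends: the paper omits this proof, stating only that it ``can be obtained similarly to the proofs in sections \ref{sect:avfgen} and \ref{sect:genconst}'', i.e.\ by merging the level-by-level peeling argument of Proposition \ref{th:Phitwogen} (color/routing into $\phi$- versus $\psi$-slots) with the shape bookkeeping of Proposition \ref{th:Phitwoall}, which is what you do. Your identification of the terminal factorization $\hat{\theta}(a)([\mu,\eta]_{s}) = a(\eta)\,\theta([\mu]_{s})$ and of the sign $(-1)^{\lvert s^i\rvert_{\ab}-1}$ as coming from the reversed product in \eqref{eq:VseriesS} are precisely the points that make the merge go through.
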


The number of trees in $TV$ grows very quickly. However, in our task of finding higher order schemes we may use the lessons of the previous sections, and require that the arguments of $S$, $\nabla^2 H$ and $Q$ in \eqref{eq:VseriesS} are B-series up to order $p-1$. Then we only need to find order conditions for energy-preserving linear combinations of the form 
\begin{equation}\label{eq:lincomball}
\omega = [(\mu_1,\eta_1)]_{s_1} \circ \cdots \circ [(\mu_{n},\eta_{n})]_{s_n} \circ [\eta_{n+1}]_{\ab} + (-1)^n \, [(\mu_n,\eta_{n+1})]_{s_n} \circ \cdots \circ [(\mu_{1},\eta_{2})]_{s_1} \circ [\eta_{1}]_{\ab},
\end{equation}
where $\mu_i$ and $\eta_i$ are forests of trees in $TP_{\ab}$ and $TP_{\AB}$ respectively, for $i=1,\ldots,n+1$. Thus we can disregard any tree with $\TB$ in it.
Furthermore, we may color all nodes of the trees in $\mu_i$ and $\eta_i$ except the roots gray, and let the elementary differentials corresponding to these trees be the same as the elementary differentials of B-trees.

We find the order conditions
\begin{equation*}
\sum_{i\in I_l}\Lambda(\hat{\tau}^i,s^i) = \xi(\tau) - \hat{e}(\tau) - \sum_{i\in I_n} \Lambda(\hat{\tau}^i,s^i),
\end{equation*}
by using the relation
$$
\Lambda(\lbrace(\mu^i_1,\eta_1^i),\ldots,(\mu^i_{n+1},\eta^i_{n+1})\rbrace,s^i) = \hat{\theta}([\mu_{n+1}^i]_{s_{n+1}^i}) \Lambda(\lbrace(\mu^i_1,\eta_1^i),\ldots,(\emptyset,\eta^i_{n+1})\rbrace,\bar{s}^i)
$$
to calculate $\Lambda(\hat{\tau}^i)$ for $i \in I_n$. The $\hat{\theta}$ is given by \eqref{eq:bhat}, and $\bar{s}^i$ is $s^i$ with $s^i_{n+1}$ replaced by $\ab$.

\begin{example}
Consider $\tAgbBabb$, which is part of the energy-preserving linear combination $\taAgbBbb + \tAgbBabb$. We have two black nodes, and calculate
\begin{align*}
\Lambda(\hat{\tau}^1,s^1) &= \Lambda(\lbrace(\ab,\agbb)\rbrace,\tb) = \hat{\theta}(\tabb)\Lambda(\lbrace(\emptyset,\agbb)\rbrace,\ab) = -\xi(\ab)\xi(\aAgbbb) = -\frac{1}{6},\\
\Lambda(\hat{\tau}^2,s^2) &= \Lambda(\lbrace(\emptyset,\agbb),(\emptyset,\emptyset)\rbrace,\tabb) = \sum_j b_{s^2j} \psi_{s^2j1}(\agbb) + \sum_j b_{\bar{s}^2j} \psi_{\bar{s}^2j2}(\agbb) = \sum_j b_{s^2j} (\psi_{s^2j1}+\psi_{s^2j2})(\agbb).
\end{align*}
Hence the order condition associated to this linear combination is
\begin{equation*}
\sum_j b_{s^2j} (\psi_{s^2j1}+\psi_{s^2j2})(\agbb) = \frac{1}{6}.
\end{equation*}
\end{example}

\begin{table}[!ht]
\begin{center}
\begin{tabular}{|c|c|c|c|}
\hline
$\lvert\tau\rvert$ & $\omega$ & $s$ & Order condition\\ \hline
$1$	& $\ab$ & $\ab$ & $\sum_j b_{sj} = \frac{1}{2}$ \\ \hline
$2$	& $\aABb$ & $\ab$ & $2 \sum_j b_{sj} \psi_{sj1}(\ab) = \frac{1}{2}$ \\
	& $\tabb$ & $\tabb$ & $\sum_j b_{sj} = \frac{1}{2}$ \\ \hline
$3$	& $\aABABb$ & $\ab$ & $2 \sum_j b_{sj} \psi_{sj1}(\ab)^2 = \frac{1}{3}$ \\
	& $\aAgbbb$ & $\ab$ & $2 \sum_j b_{sj} \psi_{sj1}(\agbb) = \frac{1}{6}$ \\
	& $\tababb$ & $\tabb$ & $\sum_{j} b_{sj} \phi_{sj1}(\ab) = \frac{1}{3}$ \\
	& $\taABbb + \tABabb$ & $\tabb$ & $\sum_j b_{sj} (\psi_{sj1}+\psi_{sj2})(\ab) = \frac{1}{2}$ \\
	& $\aaABbb - \aABabb$ & $\aabb$ & $\sum_j b_{sj} (\psi_{sj2}-\psi_{sj1})(\ab) = -\frac{1}{12}$ \\
	& $\ttabbb$ & $\ttabbb$ & $\sum_j b_{sj} = \frac{1}{2}$ \\
	& $\atabbb - \taabbb$ & $\atabbb$ & $\sum_{j} b_{sj} - \sum_{j} b_{\bar{s}j} = 0$ \\
	& $\aaabbb$ & $\aaabbb$ & $\sum_j b_{sj} = -\frac{1}{24}$ \\  \hline
$4$	& $\aABABABb$ & $\ab$ & $2 \sum_j b_{sj} \psi_{sj1}(\ab)^3 = \frac{1}{4}$ \\
	& $\aABAgbBb$ & $\ab$ & $2 \sum_j b_{sj} \psi_{sj1}(\ab)\psi_{0j1}(\agbb)=\frac{1}{8}$ \\
	& $\aAgbgBbb$ & $\ab$ & $2 \sum_j b_{sj} \psi_{sj1}(\agbgbb)=\frac{1}{12}$ \\
	& $\aAggbBbb$ & $\ab$ & $2 \sum_j b_{sj}\psi_{sj1}(\aggbbb) = \frac{1}{24}$ \\
	& $\tabababb$ & $\tabb$ & $\sum_{j} b_{sj} \phi_{sj1}(\ab)^2 = \frac{1}{4}$ \\
	& $\tabagbbb$ & $\tabb$ & $\sum_j b_{sj} \phi_{sj1}(\agbb)  = \frac{1}{6}$ \\
	& $\tAbAaBBb$ & $\tabb$ & $\sum_j b_{sj} \psi_{sj1}(\ab)\psi_{sj2}(\ab) = \frac{1}{8}$ \\
	& $\tAbaabBb + \tABababb$ & $\tabb$ & $\sum_j b_{sj} \phi_{sj1}(\ab)(\psi_{sj1}+\psi_{sj2})(\ab) = \frac{1}{3}$ \\ 
	& $\taABABbb + \tABABabb$ & $\tabb$ & $\sum_j b_{sj} (\psi_{sj1}(\ab)^2+\psi_{sj2}(\ab)^2) = \frac{1}{3} $\\ 
	& $\taAgbBbb + \tAgbBabb$ & $\tabb$ & $\sum_j b_{sj} (\psi_{sj1}+\psi_{sj2})(\agbb) = \frac{1}{6} $ \\
	& $\aAbaabBb - \aABababb$ & $\aabb$ & $\sum_j b_{sj}\phi_{sj1}(\ab)(\psi_{sj2}-\psi_{sj1})(\ab) = -\frac{1}{24}$ \\
	& $\aaABABbb - \aABABabb$ & $\aabb$ & $\sum_j b_{sj} (\psi_{sj2}(\ab)^2-\psi_{sj1}(\ab)^2) = -\frac{1}{12}$\\
	& $\aaAgbBbb - \aAgbBabb$ & $\aabb$ & $\sum_j b_{sj}(\psi_{sj2}-\psi_{sj1})(\agbb) = -\frac{1}{24}$ \\
	& $\ttababbb + \tabtabbb$ & $\ttabbb$ & $\sum_{j,k} b_{sj}\phi_{sjk}(\ab) = \frac{2}{3}$ \\
	& $\ttabABbb$ & $\ttabbb$ & $\sum_j b_{sj} \psi_{sj2}(\ab) = \frac{2}{3} $ \\
	& $\ttaAbbBb + \ttBAabbb$ & $\ttabbb$ & $ \sum_j b_{sj} (\psi_{sj1}+\psi_{sj3})(\ab)=\frac{1}{2}$ \\
	& $\taababbb - \aabtabbb$ & $\taabbb$ & $\sum_{j} b_{sj} \phi_{sj2}(\ab) - \sum_{j} b_{\bar{s}j} \phi_{\bar{s}j1}(\ab) = 0$  \\ \hline
\end{tabular}
\end{center}
\caption{Energy-preserving linear combinations of the form \eqref{eq:lincomball}. Continued in Table \ref{tab:last2}.}
\label{tab:last1}
\end{table}

\begin{table}[!ht]
\begin{center}
\begin{tabular}{|c|c|c|c|}
\hline
$\lvert\tau\rvert$ & $\omega$ & $s$ & Order condition\\ \hline
$4$	& $\taaAbbBb - \atBAabbb$ & $\taabbb$ & $\sum_j b_{sj} (\psi_{sj1}-\psi_{sj3})(\ab) = \frac{1}{12}$ \\ 
	& $\atababbb - \tabaabbb$ & $\atabbb$ & $\sum_{j} b_{sj} \phi_{sj2}(\ab) - \sum_{j} b_{\bar{s}j} \phi_{\bar{s}j1}(\ab) = 0$  \\
	& $\ataAbbBb - \taBAabbb$ & $\atabbb$ & $\sum_j b_{sj}\psi_{sj3}(\ab)-\sum_j b_{\bar{s}j}\psi_{\bar{s}j1}(\ab) = 0$ \\	
	& $\aaababbb+\aabaabbb$ & $\aaabbb$ & $\sum_{j,k} b_{sj} \phi_{sjk}(\ab)= -\frac{1}{24}$ \\ 
	& $\aaabABbb$ & $\aaabbb$ & $2 \sum_j b_{2j} \psi_{2j1}(\ab) = -\frac{1}{24}$ \\
	& $\aaaAbbBb + \aaBAabbb$ & $\aaabbb$ & $\sum_j b_{sj}(\psi_{sj1}+\psi_{sj3})(\ab) = -\frac{1}{24}$ \\
	& $\tttabbbb$ & $\tttabbbb$ & $\sum_{j} b_{sj} = \frac{1}{2}$ \\
	& $\ttaabbbb-\attabbbb$ & $\ttaabbbb$ & $\sum_{j} b_{sj} - \sum_{j} b_{\bar{s}j} = 0$ \\
	& $\taaabbbb+\aatabbbb$ & $\taaabbbb$ & $\sum_{j} b_{sj} = -\frac{1}{12} $ \\
	& $\ataabbbb$ & $\ataabbbb$ & $\sum_{j} b_{sj} = 0 $ \\  \hline
\end{tabular}
\end{center}
\caption{Energy-preserving linear combinations of the form \eqref{eq:lincomball}. Continuing from Table \ref{tab:last1}.}
\label{tab:last2}
\end{table}

We consider the order conditions for trees with $\lvert\tau\rvert \leq 3$ displayed in Table \ref{tab:last1}, and find that 
\begin{equation}
\begin{split}
\overline{S}(x,\cdot,h) =& \frac{1}{4} S(x) + \frac{3}{4}S(z_3) \\
& + h S(z_2) Q(x,z_3) S(z_2) + \frac{1}{4} h \, \big( S(z_1)\nabla^2 H(x)S(x) - S(x)\nabla^2 H(x)S(z_1)\big) \\
& + h^2 S(x)Q(x,x)S(x)Q(x,x)S(x) - \frac{1}{12} h^2 \, S(x)\nabla^2 H(x)S(x)\nabla^2 H(x)S(x),
\end{split}
\label{eq:3thSgen}
\end{equation}
where
\begin{equation*}
z_1 = x + \frac{1}{3} h f(x), \qquad z_2 = x + \frac{1}{2} h f(x), \qquad z_3 = x + \frac{2}{3} h f(z_1),
\end{equation*}
guarantees third order convergence of the scheme \eqref{eq:dgm} if $\dg H(x) \in C^2(\mathbb{R}^d\times \mathbb{R}^d,\mathbb{R}^d)$. An approximation of $S(x)$ satisfying all the order conditions in tables \ref{tab:last1} and \ref{tab:last2} is given by
\begin{equation}
\begin{split}
\overline{S}(x,\cdot,h) =& \frac{1}{2} (S(z_{11}+z_{12})+S(z_{11}-z_{12})) + \frac{1}{12} h \, \big( S(z_6)\nabla^2 H(z_2)S(x) - S(x)\nabla^2 H(z_2)S(z_6)\big) \\
& + \frac{3}{7} h \big( S(z_3)Q(x,z_5)S(z_4) + S(z_4)Q(x,z_5) S(x,z_3)\big) \\
& +\frac{8}{105} h S(x)Q(x,z_7)S(x) + \frac{1}{15} h S(x)Q(x,x)S(x)\\
& + h^2 \, S(z_2)Q(x,z_5)S(z_8)Q(x,z_5)S(z_2)\\
& - \frac{1}{12} h^2 \, S(z_2)\nabla^2 H(z_2)S(z_2)\nabla^2 H(z_2)S(z_2)\\
& + \frac{1}{6} h^2 (S(z_2)-S(x))\nabla^2 H(x) S(x) Q(x,x) S(x) \\
&-  \frac{1}{6} h^2 S(x) Q(x,x) S(x) \nabla^2 H(x) (S(z_2)-S(x))\\
&+ h^3 S(x) Q(x,x) S(x) Q(x,x) S(x) Q(x,x) \\
&- \frac{1}{12} h^3 S(x) \nabla^2 H(x) S(x) \nabla^2 H(x) S(x) Q(x,x) S(x)\\
& - \frac{1}{12} h^3 S(x) Q(x,x) S(x) \nabla^2 H(x) S(x) \nabla^2 H(x)S(x),
\end{split}
\label{eq:4thSgen}
\end{equation}
with
\begin{align*}
z_1 &= x + \frac{1}{3} h f(x), & z_5 = x + \frac{2}{3} h f(z_2), & \hspace{30pt} z_{9} = x + h f(z_6),\\
z_2 &= x + \frac{1}{2} h f(x), & z_6 = x + h f(z_2), & \hspace{30pt} z_{10} = x + h f(z_9),\\
z_3 &= x + \frac{7-\sqrt{7}}{12} h f(z_1), & z_7 = x + \frac{5}{4} h f(z_2), & \hspace{30pt} z_{11} = \frac{1}{3}(x+z_2+z_6) + \frac{1}{12} (-z_9+z_{10}),\\
z_4 &= x + \frac{7+\sqrt{7}}{12} h f(z_1), & z_{8} = x + \frac{4}{3} h f(z_2), & \hspace{30pt} z_{12} = \frac{\sqrt{3}}{36}(7 x-2 z_2-4z_6+z_9-2z_{10}),
\end{align*}
and hence a discrete gradient scheme with this $\overline{S}(x,\cdot,h)$ and any $\dg H \in C^3(\mathbb{R}^d\times \mathbb{R}^d,\mathbb{R}^d)$ will be of fourth order.

One advantage of choosing the AVF discrete gradient is that the resulting scheme generally requires fewer computations at each time step. This is clearly evident in the above example: if $\dg = \dg_{\text{AVF}}$, then \eqref{eq:3thSgen} collapses to \eqref{eq:3thSavf}, and \eqref{eq:4thSgen} collapses to \eqref{eq:4thSavf}. However, if the AVF discrete gradient is difficult to calculate, there can also be  much to gain in computational cost by choosing a symmetric discrete gradient, like the symmetrized Itoh--Abe discrete gradient \eqref{eq:siadg} or the Furihata discrete gradient \eqref{eq:furihatadg}. Then one can ignore the order condition for any combination \eqref{eq:lincomball} for which $s_j = \tb$ and $\mu_j = \emptyset$ for some $j \in[1,n]$, since this corresponds to elementary differentials involving $Q(x,x)$, which we recall is zero when the discrete gradient is symmetric. If we consider the conditions for fourth order presented in tables \ref{tab:last1} and \ref{tab:last2}, this eliminates 17 of the 22 conditions for trees with $\tb$ in the stem.
By considering the remaining order conditions we get that, if $\dg H \in C^3(\mathbb{R}^d\times \mathbb{R}^d,\mathbb{R}^d)$ and $\dg H(x,y) = \dg H(y,x)$, the discrete gradient scheme \eqref{eq:dgm} is of fourth order if
\begin{equation}
\begin{split}
\overline{S}(x,\cdot,h) =& \frac{1}{2} (S(z_5+z_6)+S(z_5-z_6)) + \frac{1}{12} h \, \big( S(z_2)\nabla^2 H(z_1)S(x) - S(x)\nabla^2 H(z_1)S(z_2)\big) \\
& + \frac{8}{9} h S(z_1) Q(x,z_7) S(z_1) - \frac{1}{12} h^2 \, S(z_1)\nabla^2 H(z_1)S(z_1)\nabla^2 H(z_1)S(z_1),
\end{split}
\label{eq:4thSsym}
\end{equation}
with
\begin{align*}
z_1 &= x + \frac{1}{2} h f(x), & z_3 = x + h f(z_2), & \hspace{30pt} z_5 = \frac{1}{3}(x+z_1+z_2) + \frac{1}{12} (-z_3+z_4),\\
z_2 &= x + h f(z_1), & z_4 = x + h f(z_3), & \hspace{30pt} z_6 = \frac{\sqrt{3}}{36}(7 x-2 z_1-4z_2+z_3-2z_4),\\
z_7 &= x + \frac{3}{4}h f(z_1).
\end{align*}
If $S$ is constant, \eqref{eq:4thSsym} simplifies to
\begin{equation}\label{eq:4thsymS}
\overline{S}(x,\cdot,h) = S + \frac{8}{9} h S Q(x,z_7) S - \frac{1}{12} h^2 \, S\nabla^2 H(z_1)S\nabla^2 H(z_1)S.
\end{equation}

\section{Numerical experiments}

For all the examples considered in the following, the resulting discrete gradient schemes are nonlinearly implicit systems, which we here solve using Newton's method.  
Note that whenever $\overline{S}(x,\xh,h)$ is independent of $\xh$, the Jacobian of
$$
F(\xh) = \xh - x - h \overline{S}(x,\xh,h)\dg H(x,\xh)
$$
is given by
\begin{equation}\label{eq:jacobian}
J(\xh) = I - h \overline{S}(x,\xh,h) D_2 \dg H(x,\xh),
\end{equation}
where $D_2 \dg H(x,\xh)$ may also be used for the calculation of $Q(x,\xh)$. The extra computational cost of using a higher-order scheme with an explicit $\overline{S}$ thus only lies in the computation of this $\overline{S}$ once each time step. A scheme with an implicitly defined $\overline{S}(x,\xh,h)$ can give a much more complicated Jacobian, but a quasi-Newton method using the approximate Jacobian given by \eqref{eq:jacobian} may still be very efficient.

\subsection{Hénon--Heiles system}
The Hénon--Heiles system can be written in the form \eqref{eq:constSform} with
\begin{equation}\label{eq:henonheiles}
S =
\begin{pmatrix}
0 & I\\
-I & 0
\end{pmatrix},
\qquad  H(q,p) = \frac{1}{2}(q_1^2+q_2^2+p_1^2+p_2^2) + q_1^2 q_2 - \frac{1}{3}q_2^3,
\end{equation}
where $I$ is the $2 \times 2$ identity matrix. We use here the same initial conditions used in \cite{Quispel08}: $q_1=\frac{1}{10}$, $q_2 = -\frac{1}{2}$, $p_1 = p_2 = 0$. The order of some of the energy-preserving methods proposed in this paper are confirmed by the left plot in Figure \ref{fig:HH}. We compare the performance of the fourth order discrete gradient methods obtained by using the $\overline{S}$ given by \eqref{eq:4thorderDGM} coupled with three different discrete gradients: the Itoh--Abe discrete gradient \eqref{eq:iadg}, the Furihata discrete gradient \eqref{eq:furihatadg}, and the AVF discrete gradient \eqref{eq:avfdg}. The symmetrized Itoh--Abe discrete gradient \eqref{eq:siadg} is for this $H$ identical to the Furihata discrete gradient. The AVF and Furihata discrete gradient methods perform in this case very similarly, and thus the error from the Furihata discrete gradient method is excluded from the right plot in Figure \ref{fig:HH}. We observe that, although it initially performs on par with the AVF method, the Itoh--Abe discrete gradient method gives a lower global error than the other fourth order methods as time goes on. Note however that this method requires the most computations at every time step.

\begin{figure}
        \centering
        \subfloat{
                \centering
                \includegraphics[width=0.46\textwidth]{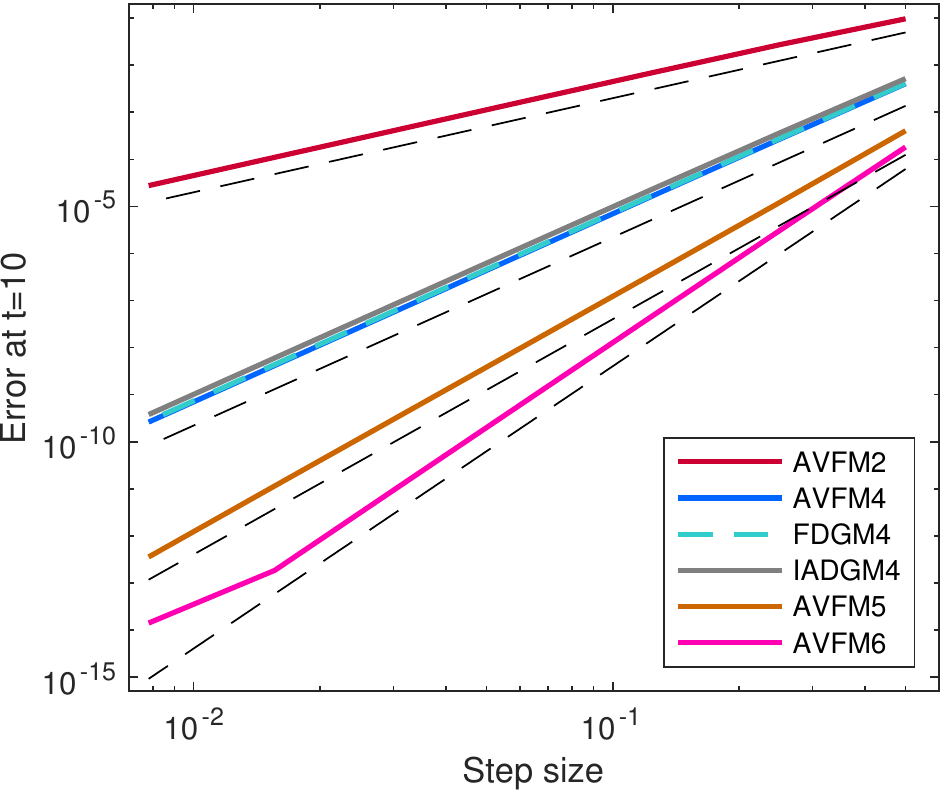}
        }\quad
        \subfloat{
                \centering
                \includegraphics[width=0.46\textwidth]{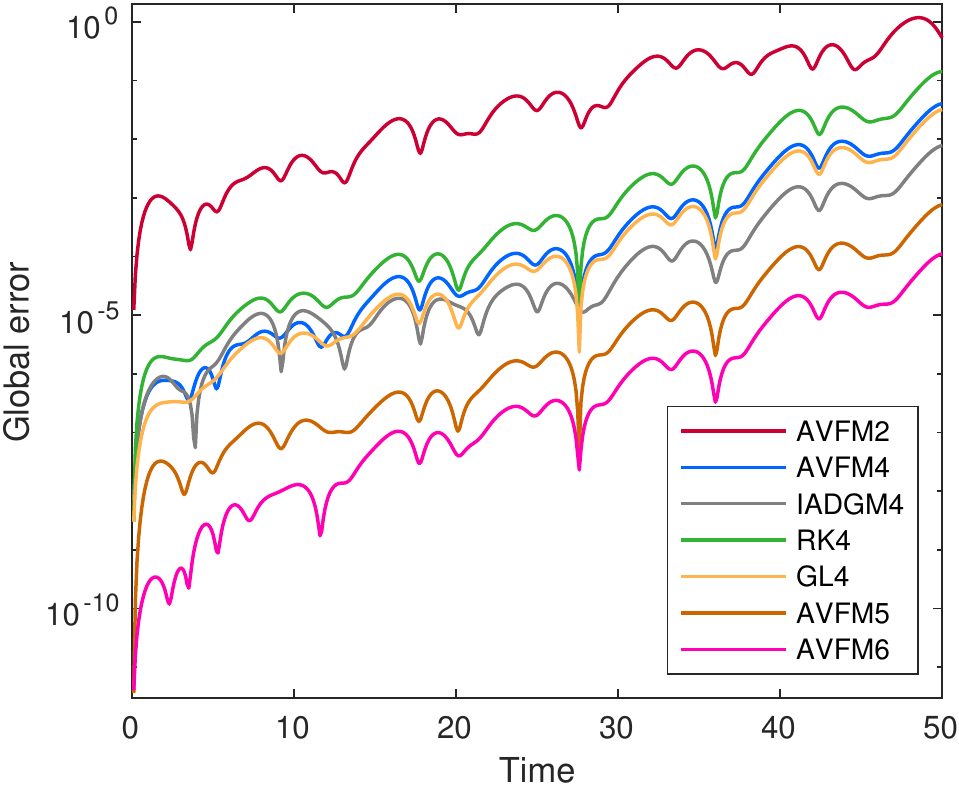}
        }
        \caption{Error plots for the Hénon--Heiles system \eqref{eq:henonheiles} solved by various discrete gradient methods: AVFM2 is the standard AVF method \eqref{eq:avfm2}; AVFM4 is the AVF discrete gradient method with $\overline{S}$ given by \eqref{eq:4thordavf}; FDGM4 is the Furihata discrete gradient method with $\overline{S}$ given by \eqref{eq:4thsymS}; IADGM4 is the Itoh--Abe discrete gradient method with $\overline{S}$ given by \eqref{eq:4thorderDGM}; AVFM5 is the scheme \eqref{eq:5thavfm}; AVFM6 is \eqref{eq:6thavfmexp}. RK4 is the classic Runge--Kutta method and GL4 is the fourth order Gauss--Legendre method, included for comparison. The black dashed lines in the order plot are reference lines of order two, four, five and six. The step size in the right plot is $h=0.1$.}
\label{fig:HH}
\end{figure}

\subsection{Lotka--Volterra}
The methods should also be tested on a skew-gradient system with non-constant $S$. We choose the Lotka--Volterra system also used for numerical experiments in \cite{Cohen11}. It is given by
\begin{equation}\label{eq:lotkavolterra}
S = \frac{1}{2}
\begin{pmatrix}
0 & -x_1 x_2 & x_1 x_3\\
x_1 x_2 & 0 & -2 x_2 x_3 \\
-x_1 x_3 & 2 x_2 x_3 & 0
\end{pmatrix},
\qquad H(x) = 2 x_1 + x_2 + 2x_3 + \ln(x_2) - 2\ln(x_3),
\end{equation}
and initial conditions $x_1 = 1$, $x_2=\frac{19}{10}$, $x_3 = \frac{1}{2}$. For this $H$, the Itoh--Abe, Furihata and AVF discrete gradients are all equivalent. We consider fourth order discrete gradient methods where $\nabla{S}$ is given either dependent on or independent of $\xh$; that is, \eqref{eq:4thSimp} or \eqref{eq:4thSavf}. The implicitly given \eqref{eq:4thSavf} yields a significantly lower error in the solution of the corresponding discrete gradient method, as can be witnessed from the left plot in Figure \ref{fig:LV}. In contrast to what we observed for the canonical Hamiltonian system studied above, none of the discrete gradient methods give a global error lower than that of the fourth order Gauss--Legendre method.

\begin{figure}
        \centering
        \includegraphics[width=0.46\textwidth]{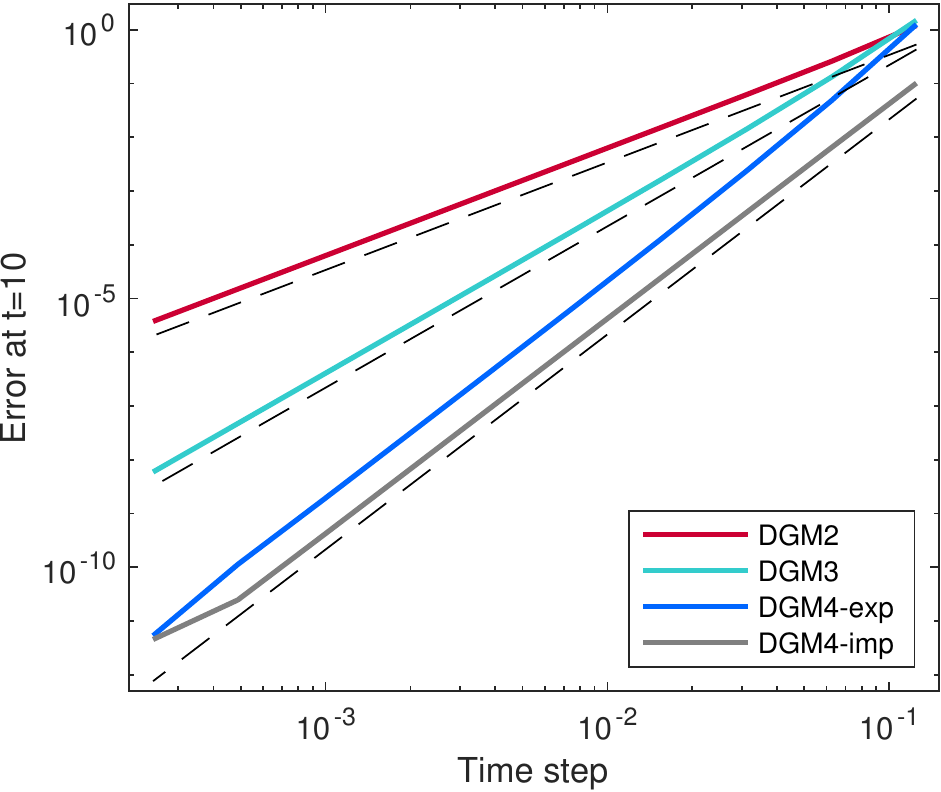}
                \caption{Order or discrete gradient methods applied to the Lotka--Volterra system \eqref{eq:lotkavolterra}, with different $\overline{S}$: $\overline{S}(x,\xh) = S(\frac{x+\xh}{2})$ for DGM2, \eqref{eq:3thSavf} for DGM3, \eqref{eq:4thSavf} for DGM4-exp, \eqref{eq:4thSimp} for DGM4-imp. The dashed lines are reference lines of order two, three and four.}
\label{fig:LVorder}
\end{figure}
\begin{figure}
        \centering
        \subfloat{
                \centering
                \includegraphics[width=0.46\textwidth]{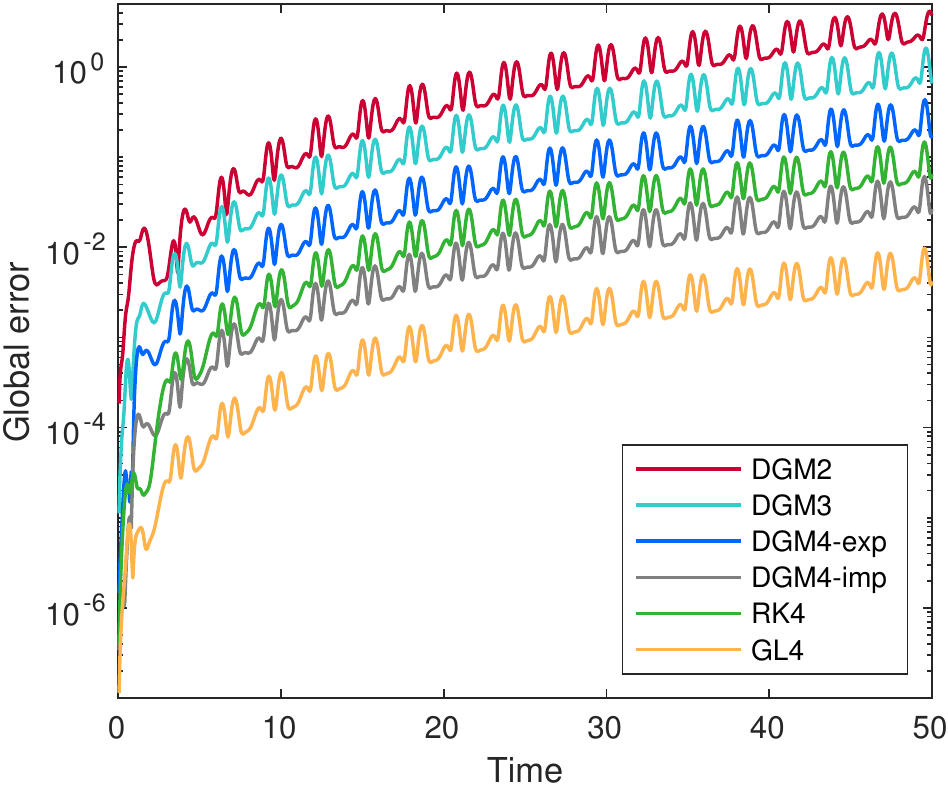}
        }\quad
        \subfloat{
                \centering
                \includegraphics[width=0.46\textwidth]{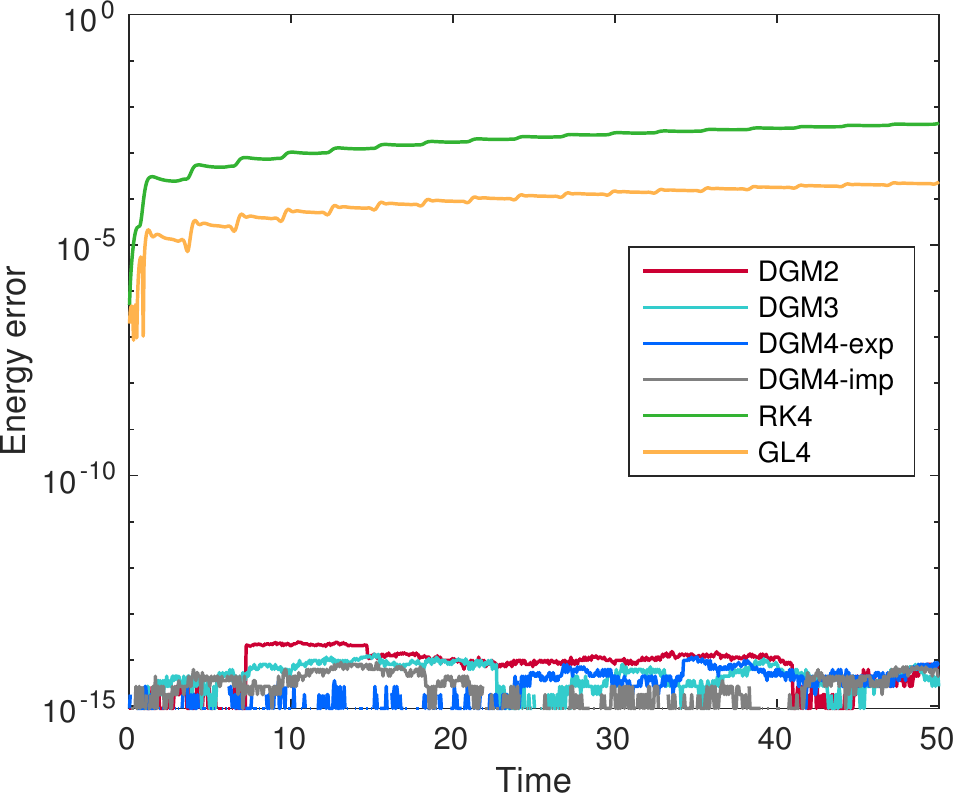}
        }
        \caption{Error in the solution and in the energy for discrete gradient methods with $\overline{S}$ given by $\overline{S}(x,\xh) = S(\frac{x+\xh}{2})$ for DGM2, \eqref{eq:3thSavf} for DGM3, \eqref{eq:4thSavf} for DGM4-exp and \eqref{eq:4thSimp} for DGM4-imp, applied to the Lotka--Volterra system, with step size $h=0.05$. For comparison, errors from using the standard fourth order Runge--Kutta (RK4) and Gauss--Legendre (GL4) methods are also included.}
\label{fig:LV}
\end{figure}

\subsection{Hamiltonian neural networks}

We present here results on the pendulum problem also considered in \cite{Greydanus19, Matsubara19}. Training data is first generated by adding noise to solutions of the system
\eqref{eq:constSform} with
\begin{equation*}
S =
\begin{pmatrix}
0 & 1\\
-1 & 0
\end{pmatrix},
\qquad  H(q,p) = 2 m g l (1- \cos{q}) + \frac{l^2}{2 m} p^2,
\end{equation*}
with $l=m=1$ and $g=3$, for various times $t \in \left[0,20\right]$ and $50$ randomly sampled initial coordinates. Then a Hamiltonian neural network (HNN) is used to learn the Hamiltonian. If the automatic discrete differentiation algorithm is used for the training, the corresponding discrete gradient is learned simultaneously.
To compute $Q(x,y)$, we require that the network also learns the Jacobian of the discrete gradient with respect to the second argument. This necessitates a modification of the network developed by Matsubara et al \cite{Matsubara19}, which we have done using the autograd class of PyTorch. The resulting Jacobian is in any case very useful for integrating the system by any discrete gradient method, since we then can employ a root-finding algorithm that use \eqref{eq:jacobian}, e.g. Newton's method.

We have tested second, third and fourth order discrete gradient methods both for training the network and for integrating the obtained dynamical system. For the fourth order scheme, we have used the $\overline{S}$ given by \eqref{eq:4thsymS}  and for the third order scheme we have used
\begin{equation}\label{eq:3rdsymS}
\overline{S}(x,\cdot,h) = S + h S Q(x,z) S - \frac{1}{12} h^2 \, S\nabla^2 H(x)S\nabla^2 H(x)S,
\end{equation}
where $z = x + \frac{2}{3} h S \nabla H(x)$. Note that $\nabla H$ and $\nabla^2 H$ can be obtained from the network through the relations $\nabla H(x) = \dg H(x,x)$ and $\nabla^2 H(x) = 2 D_2\dg H(x,x)$.

As is also noted in \cite{Matsubara19}, energy preservation seems to be more important than high order for the method used to train the system. In Figure \ref{fig:hnn_train} we see an example of this: the second order discrete gradient method outperforms the fourth order Runge--Kutta method. Whether or not higher-order discrete gradient methods give an improved performance that justifies the extra computational cost is not clear on this example. A thorough investigation of this for more complex problems is a matter for future studies.
\begin{figure}
        \centering
        \includegraphics[width=0.42\textwidth]{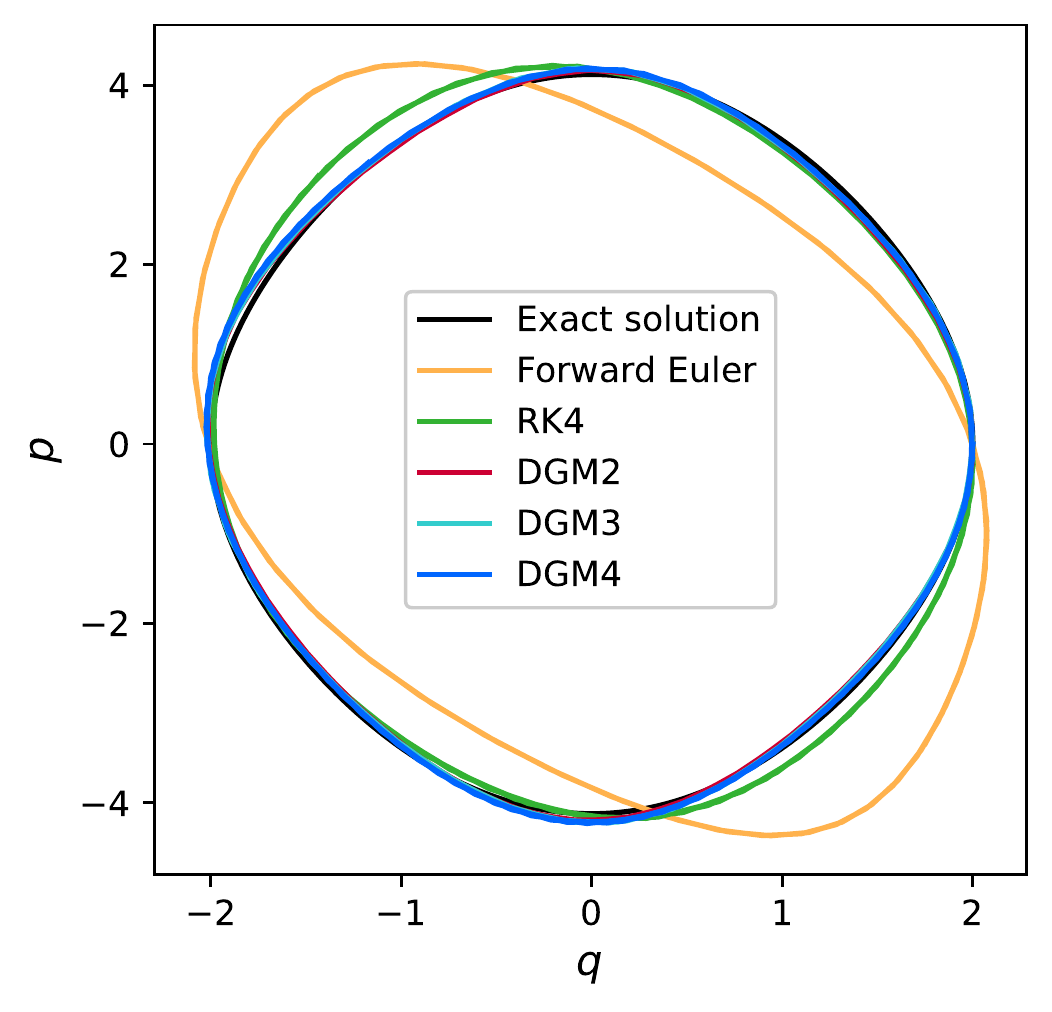}
                \caption{The trajectories that goes through $(q,p) = (2,0)$ for the dynamical systems learned by an HNN with the Euler method, the classic Runge--Kutta method and discrete gradient methods with $\overline{S}$ given by $S$ (DGM2), \eqref{eq:3rdsymS} (DGM3) and \eqref{eq:4thsymS} (DGM4). The plots for DGM2 and DGM3 are almost entirely hidden behind the plot for DGM4. The step size used in the training is $h=0.5$.}
\label{fig:hnn_train}
\end{figure}
For energy-preserving integration of the learned system, much can be gained in accuracy by using a higher-order discrete gradient method, at the expense of not much additional computational cost.  The order plots in Figure \ref{fig:hnn} are obtained by comparing to a solution found by the fourth order discrete gradient method with a smaller step size.  Python code for the neural networks used to produce the plots in Figure \ref{fig:hnn_train} and Figure \ref{fig:hnn}, and additional results on a mass-spring system, is available at \url{https://github.com/solveeid/dgnet4}.
\begin{figure}
        \centering
        \subfloat{
                \centering
                \includegraphics[width=0.46\textwidth]{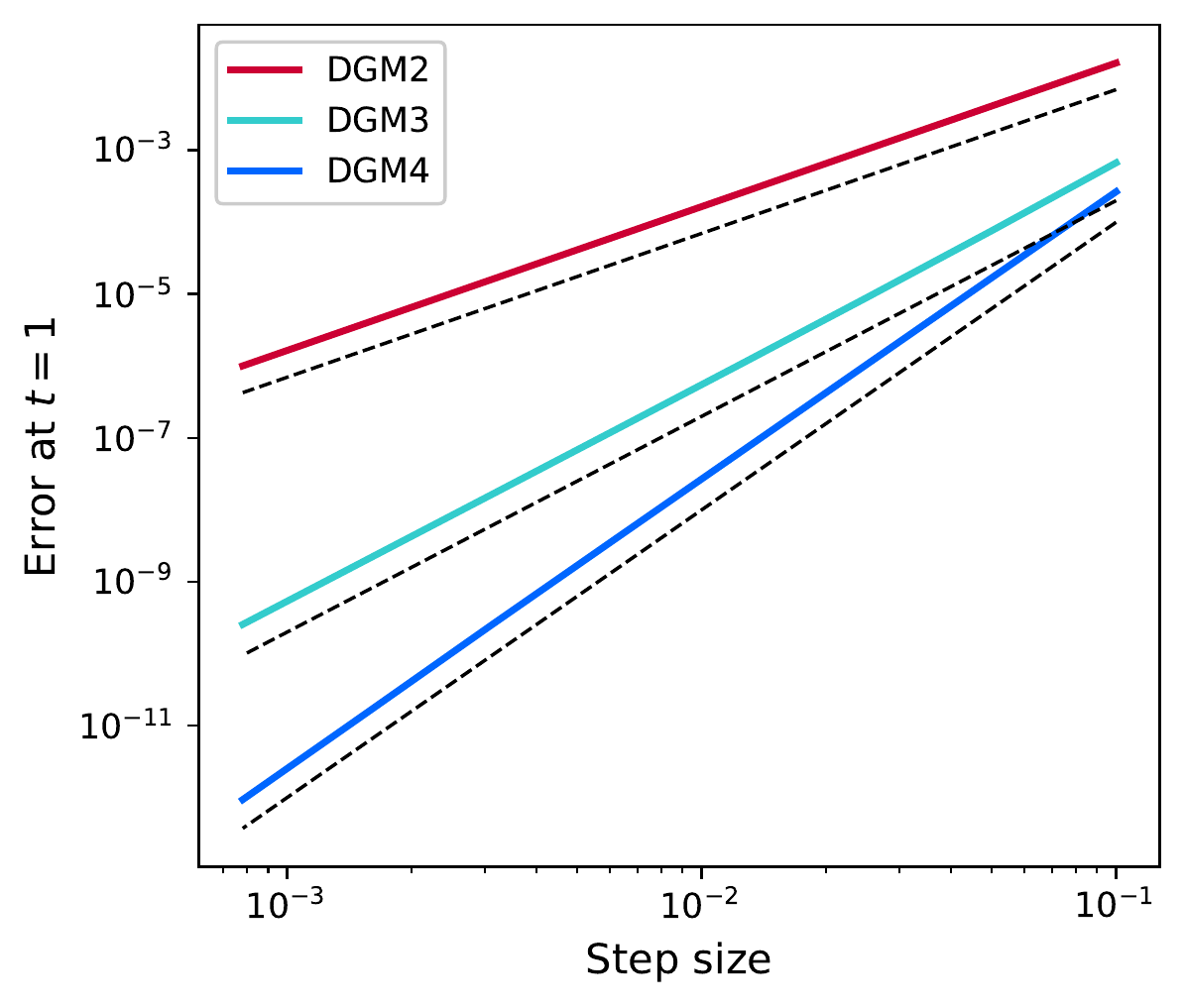}
        }\quad
        \subfloat{
                \centering
                \includegraphics[width=0.46\textwidth]{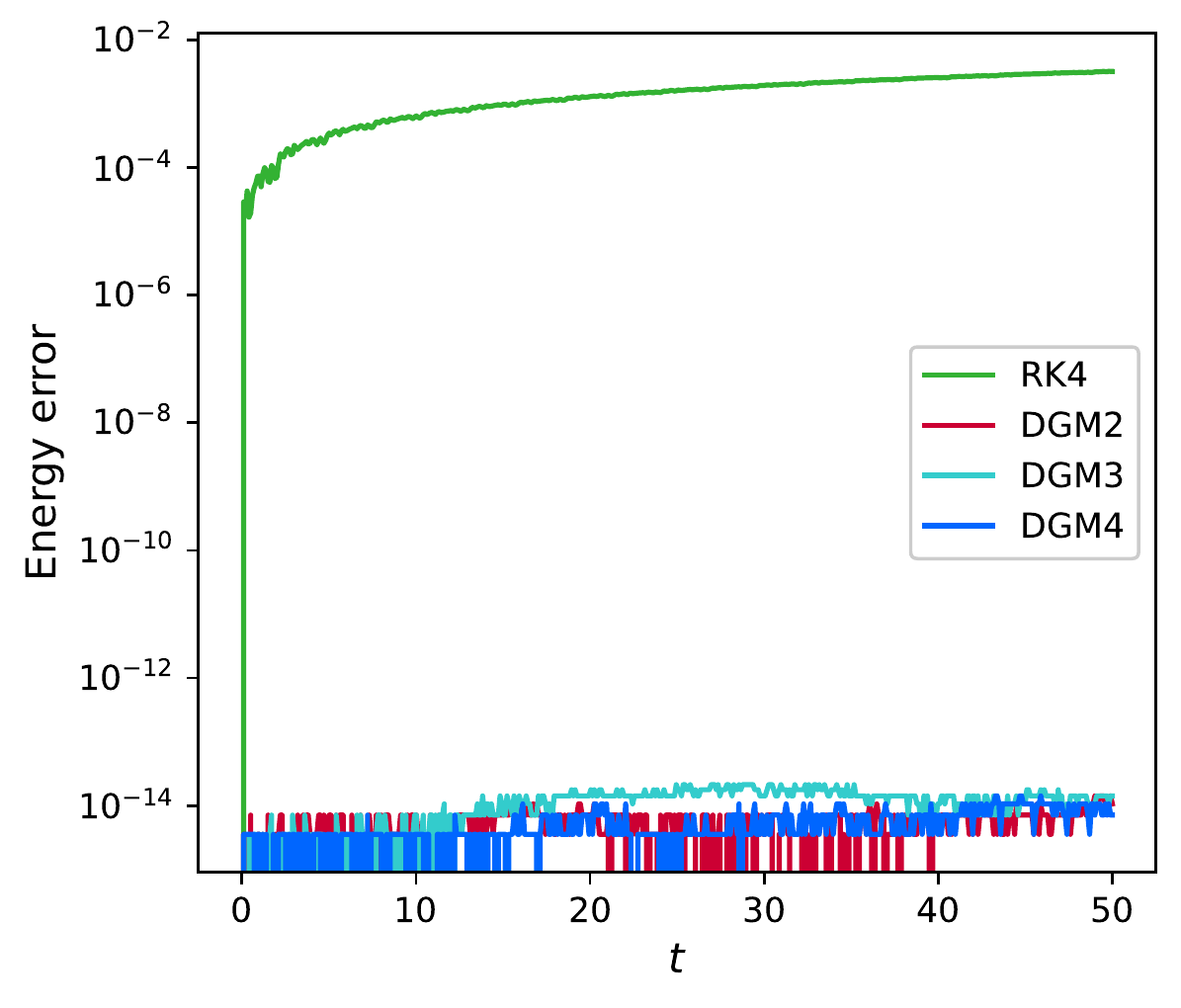}
        }
        \caption{Comparison of different integrators for the pendulum system found using an HNN with the second order discrete gradient method.  \textit{Left:} Order plots for discrete gradient methods with $\overline{S}$ given by $S$ (DGM2), \eqref{eq:3rdsymS} (DGM3) and \eqref{eq:4thsymS} (DGM4). The dashed lines are reference lines of order two, three and four. \textit{Right:} Energy errors for the same methods, and the fourth-order Runge--Kutta method. Step size $h=0.1$.}
\label{fig:hnn}
\end{figure}

\section{Conclusions and future work}

The main purpose of this paper has been to develop order theory for discrete gradient methods. This was achieved through the introduction of the function $Q$, Lemma \ref{th:Qlem} and a generalization of B- and P-series results.  Propositions \ref{th:Phitwo},  \ref{th:Phitwogen}, \ref{th:Phitwoall} and \ref{th:Phitwoallgen} present results which simplifies the derivation of the conditions from the general theory.  The techniques introduced in this paper for building on B- and P-series methods can possibly be used on more methods than the discrete gradient methods.  Future research may utilize this.

We have proposed some higher order schemes satisfying the derived order conditions. The development of specific schemes has however not been the main focus of this paper; analysis to find more optimal schemes is something we leave for the future. After such an analysis is performed, the methods could be tested on more advanced problems than those considered above, e.g.\ for the temporal discretization of Hamiltonian partial differential equations, and their performance as measured by accuracy relative to computational cost could be compared to existing methods.

The use of neural networks to train dynamical systems with preservation properties is an emerging field of study where new developments are coming with a high frequency. To our knowledge, the use of higher-order one-step energy-preserving integrators in this setting has not previously been studied, and may not even have been known to be possible until the novel results of this paper. The potential utility of the methods remain largely unexplored and will be considered in future work. This will include using methods of order higher than four and analysis of the methods compared to and in connection with other recent developments, both in the training of the network and for the integration of the resulting system.  It would also be interesting to investigate how the methods perform on more advanced examples, including Hamiltonian PDEs, and for Lagrangian neural networks \cite{Cranmer20, Aoshima21}.

The order theory presented in this paper can also be developed further in several different directions. The schemes given with $\overline{S}$ independent of $\xh$ are linearly implicit when $H$ is quadratic; if the order theory is extended to the polarized discrete gradient methods of \cite{Matsuo01dissipative,Dahlby11,Eidnes21}, we could get higher order linearly implicit multi-step schemes for systems with polynomial first integrals of any degree. Another avenue could be to consider order conditions for the discrete Riemannian gradient methods presented in \cite{Celledoni20}. Then the results in Section \ref{sect:gengen} are especially interesting, since the integral in the AVF discrete Riemannian gradient can be challenging to compute analytically. Lastly, building on \cite{Berland05}, order conditions for the exponential AVF method is given in \cite{Mei21}. This could be extended to exponential integrators using any discrete gradient by the theory presented in this paper.

\subsection*{Acknowledgments}
The author is grateful to Brynjulf Owren for the invaluable suggestions he has provided during the writing of this paper.
The research was supported by the Research Council of Norway, through the projects SPIRIT (no.\ 231632) and BigDataMine (no.\ 309691). The author would like to thank the Isaac Newton Institute for Mathematical Sciences, Cambridge, for support and hospitality during the programme \textit{Geometry, compatibility and structure preservation in computational differential equations}, where work on this paper was undertaken. This work was supported by EPSRC grant no.\ EP/R014604/1.

\bibliography{ordercond_bibl}
\bibliographystyle{ieeetr}

\end{document}